\begin{document}
\theoremstyle{plain}
\newtheorem{theorem}{Theorem}[section]
\newtheorem{definition}[theorem]{Definition}
\newtheorem{proposition}[theorem]{Proposition}
\newtheorem{lemma}[theorem]{Lemma}

\newtheorem{corollary}[theorem]{Corollary}
\newtheorem{conjecture}[theorem]{Conjecture}
\newtheorem{remark}[theorem]{Remark}
\newtheorem{open}[theorem]{Open question}
\numberwithin{equation}{section}
\newtheorem*{claim}{Claim}
\newtheorem{assumption}[theorem]{Assumption}
\errorcontextlines=0

\newcommand{\Id}{\text{Id}}
\newcommand{\mnum}{A}
\newcommand{\mrat}{B}
\newcommand{\R}{\mathbb{R}}
\newcommand{\Z}{\mathbb{Z}}
\newcommand{\N}{\mathbb{N}}
\newcommand{\X}{\mathcal{X}}
\newcommand{\supp}{\text{supp}}
\newcommand{\red}{\color{red}}
\newcommand{\black}{\color{black}}
\newcommand{\A}{A}
\newcommand{\ppow}{\beta}
\newcommand{\gau}{\ppow}
\newcommand{\Cor}{\mathcal{C}}
\renewcommand{\H}{\mathcal{H}}
\newcommand{\B}{B}
\newcommand{\Ball}{\mathcal{B}}
\newcommand{\C}{C}
\newcommand{\D}{D}
\newcommand{\E}{E}
\newcommand{\ct}{5\B\sqrt{d}}
\newcommand{\pow}{p}
\newcommand{\powun}{p_1}
\newcommand{\powde}{p_2}
\newcommand{\pds}{\delta}
\renewcommand{\P}{\mathbb{P}}
\renewcommand{\S}{\mathcal{S}}
\newcommand{\dist}{{\rm dist}}
\newcommand{\spt}{{\rm spt}}
\newcommand{\diam}{{\rm diam}}
\newcommand{\Var}{{\rm Var}}
\newcommand{\inte}{{\rm int}}
\newcommand{\Per}{{\rm Per}}
\newcommand{\vol}{{\rm vol}}
\newcommand{\CDM}{C_{\rm DM}}
\newcommand{\Res}{\mathcal{B}}
\newcommand{\Kant}{\mathcal K}
\newcommand{\YSp}{\mathcal Y}
\newcommand{\Rsp}{\mathbb R}
\newcommand{\eps}{\varepsilon}
\newcommand{\Class}{\mathcal C}
\newcommand{\Conv}{\mathrm{Conv}}
\newcommand{\sca}[2]{\langle #1\vert #2\rangle}
\newcommand{\nr}[1]{\Vert #1\Vert}
\newcommand{\restr}[1]{\left. #1\right\vert}

\setcounter{tocdepth}{1}

\title[Gluing methods for quantitative stability of optimal transport maps]{Gluing methods for quantitative stability\\ of optimal transport maps\\ \footnotesize Méthodes de recollement pour la stabilité quantitative des applications de transport optimal}

\author{Cyril Letrouit}\address{Cyril Letrouit. Université Paris-Saclay, CNRS, Laboratoire de mathématiques d’Orsay, 91405, Orsay, France} \email{cyril.letrouit@universite-paris-saclay.fr}

\author{Quentin Mérigot}\address{Quentin Mérigot. Université Paris-Saclay, CNRS, Inria, Laboratoire de mathématiques d’Orsay, 91405, Orsay, France /
DMA, École normale supérieure, Université PSL, CNRS, 75005 Paris, France  / Institut universitaire de France (IUF)} \email{quentin.merigot@universite-paris-saclay.fr}
\date{\today}
\maketitle

\begin{abstract}
We establish quantitative stability bounds for the quadratic optimal transport map $T_\mu$ between a fixed probability density $\rho$ and a probability measure $\mu$ on $\R^d$. Under general assumptions on $\rho$, we prove that the map $\mu\mapsto T_\mu$ is bi-Hölder continuous, with dimension-free Hölder exponents. The linearized optimal transport metric $W_{2,\rho}(\mu,\nu)=\|T_\mu-T_\nu\|_{L^2(\rho)}$ is therefore bi-Hölder equivalent to the $2$-Wasserstein distance, which justifies its use in applications. 

We show this property in the following cases: (i) for any log-concave density $\rho$ with full support in $\R^d$, and any log-bounded perturbation thereof; (ii) for $\rho$ bounded away from $0$ and $+\infty$ on a John domain (e.g., on a bounded Lipschitz domain), while the only previously known result of this type assumed convexity of the domain; (iii) for some important families of probability densities on bounded domains which decay or blow-up polynomially near the boundary. Concerning the sharpness of point (ii), we also provide examples of non-John domains for which the Brenier potentials do not satisfy any Hölder stability estimate. 

Our proofs rely on local variance inequalities for the Brenier potentials in small convex subsets of the support of $\rho$, which are glued together to deduce a global variance inequality. This gluing argument is based on two different strategies of independent interest: one of them leverages the properties of the Whitney decomposition in bounded domains, the other one relies on spectral graph theory.
\end{abstract}

\begin{abstract}
Nous établissons des bornes de stabilité quantitatives pour l’application de transport optimal quadratique $T_\mu$ entre une densité de probabilité fixée $\rho$ et une mesure de probabilité $\mu$ sur $\R^d$. Sous des hypothèses générales sur $\rho$, nous montrons que l’application $\mu\mapsto T_\mu$ est bi-höldérienne, avec des exposants de Hölder indépendants de la dimension. La distance de transport optimal linéarisée $W_{2,\rho}(\mu,\nu)=\|T_\mu-T_\nu\|_{L^2(\rho)}$
est ainsi équivalente de façon bi-höldérienne à la distance de Wasserstein $W_2(\mu,\nu)$, ce qui justifie son utilisation en pratique.

Nous établissons cette propriété dans les cas suivants : (i) pour toute densité log-concave $\rho$ à support plein dans $\R^d$, ainsi que pour toute perturbation log-bornée de celle-ci ; (ii) pour $\rho$ bornée inférieurement  et supérieurement sur un domaine de John (par exemple, sur un domaine lipschitzien borné), alors que le seul résultat antérieur de ce type supposait la convexité du domaine ; (iii) pour certaines familles importantes de densités de probabilité sur des domaines bornés, qui décroissent ou explosent de manière polynomiale au voisinage du bord. Concernant le caractère optimal du point (ii), nous fournissons également des exemples de domaines qui ne sont pas de John pour lesquels les potentiels de Brenier ne satisfont aucune estimation de stabilité höldérienne.

Nos preuves reposent sur des inégalités locales de variance pour les potentiels de Brenier dans de petits sous-ensembles convexes du support de $\rho$, que nous recollons afin d’obtenir une inégalité de variance globale. Cet argument de recollement s’appuie sur deux stratégies distinctes, présentant un intérêt propre : l’une exploite les propriétés de la décomposition de Whitney dans les domaines bornés, l’autre repose sur la théorie spectrale des graphes.
\end{abstract}

\tableofcontents

\section{Introduction}
Let $\mathcal{P}_2(\R^d)$ be the set of probability measures with finite second moment over $\R^d$. Given two probability measures $\rho,\mu$ in $\mathcal{P}_2(\R^d)$, the optimal transport problem for the quadratic cost in $\R^d$ consists in solving the minimization problem
\begin{equation}\label{e:opttransquad}
\inf_{\gamma\in \Gamma(\rho,\mu)} \int_{\R^d\times\R^d}\|x-y\|^2d\gamma(x,y)
\end{equation}
where $\Gamma(\rho,\mu)$ is the set of couplings between $\rho$ and $\mu$, i.e., the set of probability measures $\gamma$ over the product space $\R^d\times \R^d$ with first marginal $\rho$ and second marginal $\mu$. More generally, considering the $L^p$ cost, we define the $p$-Wasserstein distance between $\rho$ and $\mu$ as
$$
W_p(\rho,\mu)=\left(\inf_{\gamma\in \Gamma(\rho,\mu)} \int_{\R^d\times\R^d}\|x-y\|^pd\gamma(x,y)\right)^{1/p}.
$$
In this paper, we focus on the quadratic optimal transport problem in $\R^d$, i.e., the case $p=2$. A theorem of Brenier \cite{brenier} asserts that if $\rho$ is absolutely continuous with respect to the Lebesgue measure, then a unique solution $\gamma\in\Gamma(\rho,\mu)$ of \eqref{e:opttransquad} exists, and it is induced by a map $T=\nabla \phi$, where $\phi:\R^d\rightarrow \R$ is a convex function. 

\begin{definition}[Potentials and maps] 
We fix a probability measure $\rho\in\mathcal{P}_2(\R^d)$, which we assume to be absolutely continuous with respect to the Lebesgue measure and supported in $\mathcal{X}\subset\R^d$. Given $\mu\in\mathcal{P}_2(\R^d)$, we call
\begin{itemize}
\item \emph{Brenier map} and denote by $T_\mu$ the (unique) optimal transport map between $\rho$ and $\mu$;
\item \emph{Brenier potential} the unique lower semi-continuous convex function $\phi_\mu\in L^2(\rho)$ such that $T_\mu=\nabla \phi_\mu$ and  $\int_{\mathcal{X}}\phi_\mu d\rho=0$ (this function is always uniquely defined in the setting of this paper, see for instance Section  \ref{s:uniqueness}).
\end{itemize}
\end{definition}

The problem addressed in the present paper is the following: $\rho$ being fixed, and knowing that $\mu$ and $\nu$ are close to one another in some Wasserstein distance, how far can $T_\mu$ and $T_\nu$ be at most (e.g., in $L^2(\rho)$)? The stability of Brenier maps under variation of the measures is indeed fundamental from a mathematical viewpoint: it is closely related to the convergence of numerical approaches to solve optimal transport problems and justifies many of the applications of optimal transport.

Since we are dealing with the quadratic cost, the most natural distance on $\mathcal{P}_2(\R^d)$ to consider is the $2$-Wasserstein distance. To summarize, we are interested in quantitative stability bounds for the map $\mu \mapsto T_\mu$ from $(\mathcal{P}_2(\R^d),W_2)$ to $L^2(\rho)$,
i.e., bounds of the form
\begin{equation}\label{e:cequonveut}
\|T_\mu-T_\nu\|_{L^2(\rho)}\leq CW_2(\mu,\nu)^q
\end{equation}
where the constant $q$ is universal, and in particular independent of $d$. Although this is not essential (see Remark \ref{r:noncompacttarget}), we assume in our main results that the supports of $\mu,\nu$ are contained in a compact set $\mathcal{Y}$; the constant $C$ may depend on this set $\mathcal{Y}$ (and on $\rho$), but not on any other property of $\mu,\nu$.

Some comments are in order. Firstly, it is well-known that the map $\mu\mapsto T_\mu$ from $(\mathcal{P}_2(\R^d),W_2)$ to $L^2(\rho)$ is continuous, see \cite[Theorem 1.3]{brenierCPAM}. Secondly, since $(T_\mu,T_\nu)_{\#}\rho$ is a coupling between $\mu$ and $\nu$, this map is reverse Lipschitz, i.e., $\|T_\mu-T_\nu\|_{L^2(\rho)}\geq W_2(\mu,\nu)$.  Thirdly, simple examples show that one cannot expect in general $q>1/2$ in \eqref{e:cequonveut} (see \cite[Section 4]{gigli} or \cite[Lemma 5.1]{delmerchaz}). It is also natural to seek for stability bounds on the Brenier potentials, in the form
\begin{equation}\label{e:cequonveutpot}
\|\phi_\mu-\phi_\nu\|_{L^2(\rho)}\leq C'W_2(\mu,\nu)^{q'}.
\end{equation}
As we will see later, \eqref{e:cequonveutpot} implies  a bound of the form \eqref{e:cequonveut}.

Despite its important theoretical and practical interest, the quantitative stability of optimal transport maps has attracted attention only recently (\cite{gigli}, \cite{figdephil}, \cite{berman}, \cite{li}, \cite{delalande}, \cite{delmer}, \cite{delmerchaz}). Variants of this problem include quantitative stability for entropic optimal transport under perturbations of the measures (\cite{deligiannidis}, \cite{eckstein}, \cite{chizat}) or of the regularization parameter (\cite{delalande}, \cite{aram}), for semi-discrete optimal transport (\cite{bansil}, \cite{aram}), or for more general costs (\cite{mischlertrevisan}, \cite{thibert}). We also mention \cite{barycenters} which studies the quantitative stability of Wasserstein barycenters.

One of the motivations for studying the stability of optimal transport maps is that for a fixed source probability density $\rho$, the mapping $\mu \mapsto T_{\mu}$ provides an embedding of $(\mathcal{P}_2(\mathbb{R}^d),W_2)$ to the Hilbert space $L^2(\rho,\R^d)$. This embedding, often called \emph{linearized optimal transport}, allows one to apply the standard ``Hibertian" statistical toolbox to measure-valued data such as grayscale images \cite{slepcev}, \cite{kolouri}, \cite{basu}. This embedding is always distance-increasing, and stability estimates such as \eqref{e:cequonveut} show that it is bi-Hölder continuous. In other words, the distance $d(\mu,\nu) = \Vert{T_\mu - T_\nu}\Vert_{L^2(\rho)}$ preserves in a rough way the geometry associated to the Wasserstein distance. In dimension $d=1$, the embedding $\mu\mapsto T_\mu$ is an isometry, see for instance \cite[Theorem 2.18]{villani}.

Our starting point is the following result of \cite{delmer}. Here as in the rest of the paper, constants denoted by $C_{a_1,\ldots,a_n}$ are non-negative constants which depend on $a_1,\ldots,a_n$. Since the source measure $\rho$ lives in $\R^d$, constants depending on $\rho$ implicitly depend on $d$.
\begin{theorem}[Introduction of \cite{delmer}]\label{t:delmer} Let $\mathcal{X}\subset\R^d$ be a compact convex set and $\rho$ be a probability density on $\mathcal{X}$, bounded from above and below by positive constants. Let $p>d$ and $p\geq 4$. Assume that $\mu,\nu\in\mathcal{P}_2(\R^d)$ have bounded $p$-th moment, i.e. $\max(\int_{\R^d}\|x\|^pd\mu(x),\int_{\R^d}\|x\|^pd\nu(x))\leq M_p <+\infty$. Then
\begin{align}
\|T_\mu-T_\nu\|_{L^2(\rho)}&\leq C_{\rho,p,M_p}W_1(\mu,\nu)^{\frac{p}{6p+16d}}\nonumber\\
\|\phi_\mu-\phi_\nu\|_{L^2(\rho)}&\leq  C_{\rho,p,M_p}W_1(\mu,\nu)^{\frac12}.\label{e:delllmerrr}
\end{align}
If $\mu,\nu$ are supported on a compact set $\mathcal{Y}$, the Hölder exponent for the Brenier map is improved:
\begin{equation}\label{e:TmuTnustab}
\|T_\mu-T_\nu\|_{L^2(\rho)}\leq C_{\rho,\mathcal{Y}}W_1(\mu,\nu)^{\frac16}.
\end{equation}
\end{theorem}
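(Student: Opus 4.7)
My plan is to decouple the proof into two independent steps: a \emph{potential stability} estimate $\|\phi_\mu - \phi_\nu\|_{L^2(\rho)} \lesssim W_1(\mu,\nu)^{1/2}$, and a \emph{transfer inequality} that converts potential stability into map stability while paying some exponent. The guiding principle is that Brenier potentials are considerably more rigid than their gradients, so I would first prove the sharp $1/2$-exponent bound on $\phi_\mu - \phi_\nu$ and only afterwards estimate $T_\mu - T_\nu$ via an interpolation.

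\emph{Step 1 (Variance inequality and $W_1$-duality).} For $\rho$ log-bounded on the convex body $\X$, I would establish a variance inequality of the form
\[
\|\phi_\mu - \phi_\nu\|_{L^2(\rho)}^2 \leq C_{\rho,\X} \int (\phi^*_\mu - \phi^*_\nu) \, d(\nu - \mu),
\]
where $\phi^*$ denotes the Legendre transform. The underlying mechanism is the Brascamp--Lieb / Poincar\'e inequality for $\rho$ (valid under our hypotheses), applied along the Wasserstein geodesic between $\mu$ and $\nu$ to control the second variation of the dual Kantorovich functional. Since $\nabla \phi^*_\mu$ maps into $\X$, the test function $\phi^*_\mu - \phi^*_\nu$ is $\diam(\X)$-Lipschitz on $\R^d$, so Kantorovich duality for $W_1$ bounds the right-hand side by $2\diam(\X)\,W_1(\mu,\nu)$. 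This yields the sharp half-exponent bound on the potentials in the bounded-support case; for the $p$-th moment case, I would truncate $\mu,\nu$ to a ball $B_R$ (at $W_1$-cost $\lesssim M_p R^{1-p}$ by a Markov plus H\"older estimate) and optimize $R$, still producing the $W_1^{1/2}$ exponent in \eqref{e:delllmerrr}.

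\emph{Step 2 (Transfer from potentials to maps).} The key technical lemma is: for any two convex $L$-Lipschitz functions $\phi,\psi$ on the convex body $\X$,
\[
\|\nabla\phi - \nabla\psi\|_{L^2(\rho)}^2 \leq C_\rho \, L^{4/3} \, \|\phi - \psi\|_{L^2(\rho)}^{2/3}.
\]
The starting point is the integration by parts $\int |\nabla(\phi-\psi)|^2 \rho\,dx = -\int (\phi-\psi)\,\nabla\cdot(\rho\nabla(\phi-\psi))\,dx + \text{boundary}$, together with the fact that $\Delta\phi,\Delta\psi$ are positive Radon measures of total mass $\lesssim L\cdot\vol(\partial\X)$ (by the divergence theorem applied to gradients of convex Lipschitz functions). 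Combined with Step 1 in the bounded-support case, this produces $\|T_\mu - T_\nu\|_{L^2(\rho)}^2 \lesssim W_1(\mu,\nu)^{1/3}$, i.e.\ the exponent $1/6$ in \eqref{e:TmuTnustab}; rerunning the truncation argument yields the $p$-dependent exponent $p/(6p+16d)$ in the moment case.

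\emph{Main obstacle.} The hardest part is obtaining the \emph{dimension-free} exponent $2/3$ in the transfer inequality of Step 2. A naive Gagliardo--Nirenberg argument based on the pointwise bound $\|\phi-\psi\|_\infty \lesssim L^{d/(d+2)}\|\phi-\psi\|_{L^2(\rho)}^{2/(d+2)}$ produces only the dimension-dependent exponent $1/(d+2)$. To reach dimension-freedom one must exploit the \emph{joint} convexity of $\phi$ and $\psi$ more subtly, for instance by partitioning $\X$ according to the size of $|\nabla(\phi-\psi)|$ and converting gradient deviations into potential variations via the subgradient inequality on each piece, rather than going through the crude $L^\infty$ bound.
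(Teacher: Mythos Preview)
Your proposal is correct and follows essentially the same two-step architecture as the paper (which reproduces the argument of \cite{delmer}): the variance inequality for the Kantorovich functional via Brascamp--Lieb (Theorem~\ref{t:stability-compact}) combined with Kantorovich--Rubinstein duality yields the $W_1^{1/2}$ bound on potentials, and the reverse-Poincar\'e inequality for differences of convex Lipschitz functions (Proposition~\ref{p:GNdelmer}, quoted from \cite{delmer}) transfers this to maps with exponent $1/6$. One minor technical correction: the interpolation in Step~1 is the \emph{linear} path $\psi_t = (1-t)\psi_\mu + t\psi_\nu$ in the dual potentials, not the Wasserstein geodesic between $\mu$ and $\nu$; this is what makes the second derivative of $\mathcal{K}_\rho$ explicitly computable and is where Brascamp--Lieb is applied (see \eqref{eq:formula-diff-K}--\eqref{eq:d2-lower-bound-var}).
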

\begin{remark}[Comparison between $W_1$ and $W_2$]
We note that since $W_1\leq W_2$, the estimates in the above theorems, as well as in all results of the present paper, imply a bi-Hölder behaviour of the map $\mu\mapsto T_\mu$ on subsets of $\mathcal{P}_2(\R^d)$ with respect to both Wasserstein distances $W_1$ and $W_2$. 
\end{remark}
Our purpose in this paper is to obtain similar stability results for much more general source probability measures $\rho$. Along the way, we unveil a strong analogy between optimal transport stability estimates and some proofs of Poincaré(-Wirtinger) inequalities. We also provide an example of $\rho$ for which stability of Brenier potentials fails. All together, these results give a fairly general picture of the source measures $\rho$ for which quantitative stability of optimal transport potentials and maps may be expected to hold. 

In all previous works on the subject (notably \cite{berman}, \cite{delmerchaz}, \cite{delmer}, \cite[Section 4]{aram}, \cite{mischlertrevisan}), convexity, compactness and upper and lower bounds on $\rho$ were used in a crucial way. Our results show that neither the compactness,  nor the convexity of $\mathcal{X}$ are actually needed for stability to hold, and that the density $\rho(x)$ may also be allowed to tend to $0$ or to $+\infty$ at some controlled rate, either for $x$ close to the boundary of $\mathcal{X}$ if $\mathcal{X}$ is bounded, or as $|x|\rightarrow +\infty$ if $\mathcal{X}$ is unbounded. 

We prove that the exponents we obtain for the stability of Brenier potentials are sharp in some cases, see for instance Theorem \ref{t:powlawdist} and Proposition \ref{p:sharpexpo}. This is the first time that stability exponents are proved to be sharp: for instance it is not known whether the exponents $1/2$ in \eqref{e:delllmerrr} and $1/6$ in \eqref{e:TmuTnustab} are sharp or not. 

The proofs of our main results leverage domain decomposition techniques and spectral graph theory. The starting point of our investigations was the proof of \cite[Proposition B.2]{barycenters} in which it was first noticed, although on the different problem of quantitative stability of Wasserstein barycenters, that convexity of the support of $\rho$ might not be necessary to prove quantitative stability bounds. 

\subsection{The log-concave case}

We first establish stability estimates in the case where the source measure $\rho$ is a log-concave probability density on $\R^d$, with support equal to $\R^d$. As in the Holley-Stroock principle (\cite[page 1185]{holley}, \cite[Proposition 4.2.7]{BGL}), we are also able to deal with log-bounded perturbations of log-concave measures. Our first main result is the following: 

\begin{theorem}\label{t:logconcave}
Let $\rho=e^{-U-F}dx$  be a probability density on $\R^d$, with $D^2U\geq \kappa\cdot {\rm Id}$ for some $\kappa>0$, and $F\in L^\infty(\R^d)$. Let $\mathcal{Y}\subset\R^d$ be a compact set. Then, there exists $C_{\rho,\mathcal{Y}}>0$ such that for any probability measures $\mu,\nu$ supported in $\mathcal{Y}$, 
\begin{equation}
\|\phi_\mu-\phi_\nu\|_{L^2(\rho)}\leq C_{\rho,\mathcal{Y}}W_1(\mu,\nu)^{\frac12}(1+\left|\log(W_1(\mu,\nu))\right|)^{\frac12}\label{e:stabpotlogconc}
\end{equation}
If additionally $D^2U\leq \kappa'\cdot{\rm Id}$ then there exists $C_{\rho,\mathcal{Y}}>0$ such that for any probability measures $\mu,\nu$ supported in $\mathcal{Y}$, 
\begin{equation}
\|T_\mu-T_\nu\|_{L^2(\rho)}\leq C_{\rho,\mathcal{Y}}W_1(\mu,\nu)^{\frac{\kappa}{2\kappa'+7\kappa}}.\label{e:stabmaplogcon}
\end{equation}
\end{theorem}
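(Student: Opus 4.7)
\emph{Proof plan.} I will prove \eqref{e:stabpotlogconc} by decomposing $\R^d$ into small convex pieces, applying Theorem~\ref{t:delmer} on each piece, and gluing the local bounds via a weighted discrete Poincaré inequality; the map estimate \eqref{e:stabmaplogcon} is then obtained by interpolation using the Hessian bounds that become available under the extra hypothesis $D^2 U\leq\kappa'\,\Id$. Since $D^2 U\geq\kappa\,\Id$ and $F\in L^\infty$, the Holley--Stroock comparison gives $\rho(\R^d\setminus B_R)\leq C_1 e^{-c_1 R^2}$. As $\mu,\nu$ live in the compact set $\mathcal{Y}$, the maps $T_\mu,T_\nu$ are bounded and so $|\phi_\mu(x)-\phi_\nu(x)|\leq C_2(1+|x|)$ globally. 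Hence the tail contribution satisfies $\int_{\R^d\setminus B_R}|\phi_\mu-\phi_\nu|^2 d\rho\leq C R^2 e^{-c R^2}$, and it suffices to control $\|\phi_\mu-\phi_\nu\|_{L^2(\rho|_{B_R})}$ with $R$ tuned at the end.

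\emph{Local variance inequality.} Partition $B_R$ (up to measure zero) into convex cells $\{K_i\}_{i\in I}$ of fixed small diameter $r$, e.g.\ dyadic cubes. On each $K_i$, $\rho$ lies between positive constants depending only on $R$. The key observation is that $(\phi_\mu)_{|K_i}$ is convex and its gradient pushes the normalized restriction $\widetilde\rho_i:=\rho|_{K_i}/\rho(K_i)$ onto $\mu_i:=(T_\mu)_\#\widetilde\rho_i\in\mathcal{P}_2(\mathcal{Y})$, so by uniqueness in Brenier's theorem it is, up to an additive constant, the Brenier potential of the sub-problem $(\widetilde\rho_i,\mu_i)$. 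Applying Theorem~\ref{t:delmer} to this convex, uniformly elliptic sub-problem yields
$$\Var_{\widetilde\rho_i}(\phi_\mu-\phi_\nu)\leq C_{R,r}\,W_1(\mu_i,\nu_i).$$

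\emph{Gluing via spectral graph theory.} Let $m_i$ denote the $\widetilde\rho_i$-mean of $\phi_\mu-\phi_\nu$ on $K_i$ and $\bar m$ its global mean on $B_R$. Because the $K_i$ partition $B_R$,
$$\|\phi_\mu-\phi_\nu-\bar m\|_{L^2(\rho|_{B_R})}^2=\sum_i\rho(K_i)\Var_{\widetilde\rho_i}(\phi_\mu-\phi_\nu)+\sum_i\rho(K_i)(m_i-\bar m)^2.$$
The first sum is controlled by the local inequality together with $\sum_i\rho(K_i)W_1(\mu_i,\nu_i)\leq W_1(\mu,\nu)$, which follows from gluing the local optimal couplings. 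For the second, form a graph $G=(I,E)$ by joining cells sharing a codimension-one face, with vertex weights $w_i=\rho(K_i)$ and edge weights $w_{ij}=\min(w_i,w_j)$. A discrete Poincaré inequality on $G$,
$$\sum_i w_i(m_i-\bar m)^2\leq C_G\sum_{i\sim j}w_{ij}(m_i-m_j)^2,$$
combined with the estimate $(m_i-m_j)^2\leq C\bigl(\Var_{\widetilde\rho_i}(\phi_\mu-\phi_\nu)+\Var_{\widetilde\rho_j}(\phi_\mu-\phi_\nu)\bigr)$ coming from comparing means on adjacent cells of comparable mass, feeds back into the local inequality. Summing and applying Cauchy--Schwarz yields $\|\phi_\mu-\phi_\nu-\bar m\|_{L^2(\rho|_{B_R})}^2\leq \widetilde C(R)\,W_1(\mu,\nu)^{1/2}$ with $\widetilde C(R)$ depending polynomially on $R$. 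Balancing with the tail bound by choosing $R\sim\sqrt{|\log W_1(\mu,\nu)|}$ produces the logarithmic factor in \eqref{e:stabpotlogconc}.

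\emph{Maps via Hessian bootstrap, and main obstacle.} Under $D^2U\leq\kappa'\,\Id$, differentiating the Monge--Ampère equation and integrating against $\rho$ produces an $L^2(\rho)$-bound on $D^2\phi_\mu,D^2\phi_\nu$. An interpolation inequality of Gagliardo--Nirenberg type, of the form $\|\nabla f\|_{L^2(\rho)}^{1+\alpha}\leq C\|f\|_{L^2(\rho)}^\alpha\|D^2 f\|_{L^2(\rho)}$ applied to $f=\phi_\mu-\phi_\nu$, combined with \eqref{e:stabpotlogconc} gives \eqref{e:stabmaplogcon} with exponent $1/9$ after optimization. The hardest step is the gluing: quantifying the spectral gap constant $C_G$ with only polynomial dependence on $R$, while the weights $w_i=\rho(K_i)$ vary over many orders of magnitude because of the Gaussian decay of $\rho$, requires a careful choice of cover and weighting, and is where the spectral graph theory viewpoint becomes essential.
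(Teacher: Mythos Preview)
Your proposal contains a concrete error and misses the mechanism that makes the log-concave case work. The claimed inequality $\sum_i\rho(K_i)W_1(\mu_i,\nu_i)\leq W_1(\mu,\nu)$ is false: gluing the local optimal couplings $\gamma_i$ produces a coupling of $(\mu,\nu)$ with cost $\sum_i\rho(K_i)W_1(\mu_i,\nu_i)$, which gives the \emph{reverse} inequality. There is no reason the global optimal plan should decompose along the source partition compatibly with both $T_\mu$ and $T_\nu$. The paper never passes to $W_1$ locally: the local estimate is kept in bilinear form, $\Var_{\tilde\rho_i}(\phi_\mu-\phi_\nu)\leq C\,\langle\psi_\mu-\psi_\nu\mid(\nabla\phi_\nu)_\#\rho_{|K_i}-(\nabla\phi_\mu)_\#\rho_{|K_i}\rangle$, each such term is non-negative, and their sum over a partition is exactly $\langle\psi_\mu-\psi_\nu\mid\nu-\mu\rangle$, to which Kantorovich--Rubinstein is applied once, globally. (Separately, bounding $(m_i-m_j)^2$ by the two local variances requires the cells to overlap; on a strict partition this step fails.)

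More fundamentally, for \eqref{e:stabpotlogconc} the paper does not decompose $B_r$ into small cells at all. It applies Theorem~\ref{t:stability-compact} directly on the single convex set $B_r$, taking the reference log-concave measure $\sigma\propto e^{-U}$ on $B_r$ rather than Lebesgue. The ratio $m_\rho/M_\rho$ in that theorem then equals $e^{-2\|F\|_\infty}$, independent of $r$, and the only $r$-dependence comes from $\diam(B_r)$ and the fact that $\psi_{\mu,r}$ is $r$-Lipschitz, yielding $\Var_{\rho_r}(\phi_\mu-\phi_\nu)\leq C r^2 W_1(\mu_r,\nu_r)$. This is precisely where log-concavity (via Brascamp--Lieb, hidden inside Theorem~\ref{t:stability-compact}) is used. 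Your route instead compares $\rho$ to Lebesgue on each cell, so the local constants pick up factors $e^{cRr}$ near $\partial B_R$, and the graph Poincar\'e constant $C_G$ must cope with vertex weights spanning $e^{cR^2}$ orders of magnitude---a step you correctly flag as the hardest but do not carry out.

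For \eqref{e:stabmaplogcon}, the hypothesis $D^2U\leq\kappa'\,\Id$ is not used to bound $D^2\phi_\mu$: since $\mu,\nu$ may be singular, no Monge--Amp\`ere regularity is available. The paper uses Proposition~\ref{p:GNdelmer}, a Gagliardo--Nirenberg inequality for differences of convex Lipschitz functions, which requires only the $R_{\mathcal{Y}}$-Lipschitz bound on $\phi_\mu,\phi_\nu$. The upper Hessian bound enters only through the pointwise lower bound $\rho(x)\geq m\,e^{-\kappa'|x|^2/2}$, which converts the Lebesgue-measure estimate on $B_r$ to an $L^2(\rho)$ estimate at the price of a factor $e^{\kappa' r^2/6}$; balancing this against the Gaussian tail and then against \eqref{e:stabpotlogconc} yields the exponent $1/9$.
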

To our knowledge, Theorem \ref{t:logconcave} is the first result establishing stability estimates for probability densities with unbounded support in $\R^d$ (see for instance \cite{mischlertrevisan}
 where this is mentioned as an open problem in Section 1.4, and where the case of log-concave measures with bounded support is handled). The inequality \eqref{e:stabpotlogconc} is sharp, up to the log factor, see Proposition \ref{p:sharpexpo}.
Of course, the log-loss in \eqref{e:stabpotlogconc} may be replaced by $W_1(\mu,\nu)^{-\varepsilon}$ for arbitrarily small $\varepsilon>0$, at the price of introducing a dependence of $C_{\rho,\mathcal{Y}}$ in $\varepsilon$. Also, the exponent $\frac{\kappa}{2\kappa'+7\kappa}$ in \eqref{e:stabmaplogcon} can be replaced by $\frac{\kappa}{2\kappa'+6\kappa}-\varepsilon$ for any $\varepsilon>0$; and again, this introduces a dependence of the constant $C_{\rho,\mathcal{Y}}$ in $\varepsilon$. The constants in Theorem \ref{t:logconcave}, as well as in all other results of this paper, can be made explicit.

 \begin{remark}\label{r:noncompacttarget}
Theorem \ref{t:logconcave}, as well as the results that we describe below, are stated only for target measures which are supported in a compact set $\mathcal{Y}$. This choice is mainly made for simplicity and to keep the paper readable, since handling unbounded targets would create an additional layer of complexity in the proofs. We refer to \cite[Section 4]{delmer} where this extension is done for $\rho$ satisfying the assumptions of Theorem \ref{t:delmer}.
\end{remark}
 
\subsection{The case of John domains}

Our second result extends Theorem \ref{t:delmer} to the case where $\rho$ is a probability density on a John domain, whose density is assumed to be bounded above and below by positive constants. Roughly speaking, a domain is a John domain if it is possible to move from one point to another without passing too close to the boundary. This notion is encountered for instance when dealing with Sobolev-Poincaré inequalities. Let us recall the precise definition and then provide examples.


\begin{definition}\label{d:john}
A bounded domain $\X\subset\R^d$ is called a John domain if for some point $x_0\in\X$, some $\alpha>0$, and for any $x\in\X$, there exists a curve $\gamma:[0,\ell]\rightarrow \X$ parametrized by arc-length such that $\gamma(0)=x$, $\gamma(\ell)=x_0$, and
\begin{equation}\label{e:foralltinellgammat}
\forall t\in[0,\ell], \quad \dist(\gamma(t),\partial \X)\geq \alpha t
\end{equation}
where $\dist$ denotes the Euclidean distance. 
\end{definition}

John domains were introduced in \cite{john}, and the terminology was coined in \cite{martio}. Lipschitz domains, and more generally domains satisfying the cone condition, are John domains. John domains may have fractal (and nonrectifiable) boundaries or internal cusps, but external cusps are excluded. The snowflake domains, domains bounded by a Koch curve and bounded quasidiscs of two­dimensional quasiconformal theory are John domains. We refer to \cite{nakki} and \cite[Chapter 5]{pommerenke} for accounts on John domains.

The main result of this section is the following:
\begin{theorem}\label{t:mainjohn}
Let $\X\subset\R^d$ be a John domain and let $\rho$ be a probability density on $\X$, bounded from above and below by positive constants. Then, for any compact set $\mathcal{Y}$, there exists $C_{\rho,\mathcal{Y}}>0$ such that for any probability measures $\mu,\nu$ supported in $\mathcal{Y}$,
\begin{equation}\label{e:stabpotjohn}
\|\phi_\mu-\phi_\nu\|_{L^2(\rho)}\leq C_{\rho,\mathcal{Y}}W_1(\mu,\nu)^{\frac12}.
\end{equation}
If in addition $\X$ has a rectifiable boundary with finite $(d-1)$-dimensional Hausdorff measure, then
\begin{equation}\label{e:stabmapjohn2}
\|T_\mu-T_\nu\|_{L^2(\rho)}\leq C_{\rho,\mathcal{Y}}W_1(\mu,\nu)^{\frac16}.
\end{equation}
\end{theorem}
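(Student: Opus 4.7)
The plan is to prove \eqref{e:stabpotjohn} by a gluing argument that promotes the local variance inequalities available on convex subsets (Theorem~\ref{t:delmer}) to a global variance inequality on the non-convex domain $\X$. The improvement \eqref{e:stabmapjohn2} then follows from a classical Alexandrov-type interpolation between potential and map stability for convex functions, in which the rectifiable boundary of $\X$ allows one to control the boundary contributions.

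\textbf{Step 1 (local variance inequalities).} I would first apply a Whitney decomposition to $\X$, producing a family $\{Q_i\}_{i\in I}$ of essentially disjoint dyadic cubes with $Q_i\subset\X$, $\diam(Q_i)\asymp \dist(Q_i,\partial\X)$, and bounded overlap upon a fixed dilation. Each $Q_i$ is convex, and the conditional density $\rho|_{Q_i}$ is bounded above and below. Since $\mu,\nu$ are supported in the compact set $\mathcal{Y}$, the function $u := \phi_\mu - \phi_\nu$ is Lipschitz on $\X$ with a constant depending only on $\X$ and $\mathcal{Y}$. Applying Theorem~\ref{t:delmer} locally to $\rho|_{Q_i}$ yields
\begin{equation*}
\int_{Q_i} (u - c_i)^2\, d\rho \;\leq\; \kappa_i\, W_1(\mu,\nu), \qquad c_i := \frac{1}{\rho(Q_i)}\int_{Q_i} u\, d\rho,
\end{equation*}
where $\kappa_i$ has a controlled dependence on $\diam(Q_i)$.

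\textbf{Step 2 (gluing via John chains).} I would then exploit the Boman chain condition implied by the John property: there is a central cube $Q_0$ such that each $Q_i$ is joined to $Q_0$ by a chain $Q_0, Q_{j_1}, \ldots, Q_{j_N} = Q_i$ of consecutive Whitney cubes of comparable size, whose pairwise intersections have volume bounded below. Telescoping $|c_i - c_0|^2 \leq N \sum_k |c_{j_{k+1}} - c_{j_k}|^2$, bounding each jump by the local variance on $Q_{j_k}\cup Q_{j_{k+1}}$, and summing over $i$ using the bounded overlap together with the summability of John weights, one should obtain the global variance estimate
\begin{equation*}
\int_\X (u - c_0)^2\, d\rho \;\leq\; C_{\rho,\mathcal{Y}}\, W_1(\mu,\nu).
\end{equation*}
Combined with the normalization $\int u\, d\rho = 0$, this yields \eqref{e:stabpotjohn}.

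\textbf{Step 3 (map stability).} For \eqref{e:stabmapjohn2}, I would combine Step 2 with the classical interpolation
\[
\|\nabla\phi_\mu - \nabla\phi_\nu\|_{L^2(\rho)} \;\leq\; C \|\phi_\mu - \phi_\nu\|_{L^2(\rho)}^{1/3},
\]
valid for uniformly Lipschitz convex potentials on $\X$. Its proof proceeds by an integration-by-parts identity against a suitable vector field; the rectifiability of $\partial\X$ is precisely what ensures that the boundary term is controlled. Composing with the exponent $1/2$ of Step 2 produces the exponent $1/6$.

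\textbf{Main obstacle.} The delicate point is Step 2: both $\kappa_i$ and the chain length from $Q_0$ to $Q_i$ deteriorate as $Q_i$ approaches $\partial\X$, and one must ensure that they combine into a single dimension-free constant. This probably requires a careful choice of the Whitney scale, possibly truncating at a threshold depending on $W_1(\mu,\nu)$ and handling the residual small cubes directly via the uniform Lipschitz bound on $u$. The John condition is essential here, as it guarantees the summability of weights along chains; this is consistent with the counterexamples on non-John domains announced in the introduction.
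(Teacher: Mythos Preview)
Your overall architecture (Whitney cubes, Boman chains, then the reverse-Poincar\'e / Gagliardo--Nirenberg interpolation for the maps) matches the paper, but Step~1 as written does not hold, and this is the main gap. Applying Theorem~\ref{t:delmer} with source $\rho|_{Q_i}$ does \emph{not} give a bound in terms of the global $W_1(\mu,\nu)$: the restricted potentials $\phi_\mu|_{Q_i},\phi_\nu|_{Q_i}$ are Brenier potentials from $\rho_i$ to the truncated targets $\mu_i=(\nabla\phi_\mu)_\#\rho_i$ and $\nu_i=(\nabla\phi_\nu)_\#\rho_i$, so what you actually obtain is $\Var_{\rho_i}(u)\leq \kappa_i\,W_1(\mu_i,\nu_i)$. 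These local Wasserstein distances are not additive over the decomposition, and there is no reason for $\sum_i \rho(Q_i)\,W_1(\mu_i,\nu_i)$ to be controlled by $W_1(\mu,\nu)$. The paper avoids this by never passing to $W_1$ at the local stage. It uses instead the variance inequality of Theorem~\ref{t:stability-compact} in its dual-pairing form,
\[
\Var_{\tilde\rho_Q}(\phi_\mu-\phi_\nu)\ \leq\ C\,\langle\psi_\mu-\psi_\nu\mid (\nabla\phi_\nu)_\#\tilde\rho_Q-(\nabla\phi_\mu)_\#\tilde\rho_Q\rangle,
\]
with a constant $C$ uniform over cubes (depending on $\diam(\X)$, not $\diam(Q)$). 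These pairings are nonnegative and genuinely additive: summing over $Q$ and using bounded overlap gives the single global pairing $\langle\psi_\mu-\psi_\nu\mid \nu-\mu\rangle$, and only then is Kantorovich--Rubinstein applied once. This also dissolves what you flagged as the ``main obstacle'': $\kappa_i$ is uniform, and no accumulation of local $W_1$ errors occurs.

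A second, smaller gap is in the gluing itself. The naive telescoping $|c_i-c_0|^2\leq N\sum_k|c_{j_{k+1}}-c_{j_k}|^2$ introduces the chain length $N$, which is unbounded near $\partial\X$; ``summability of John weights'' is too vague to cancel it. The paper's Lemma~\ref{l:gluevarjohn} circumvents chain lengths altogether by using the Boman condition $Q\subset BQ_j$ to replace the chain sum by $\sum_{\tilde Q}a_{\tilde Q}\chi_{B\tilde Q}$, and then a maximal-function estimate (Lemma~\ref{l:maximal}, relying on the doubling property~\eqref{e:ctdoubling}) to pass from $\chi_{B\tilde Q}$ back to $\chi_{\tilde Q}$. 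Step~3 is essentially correct; the relevant interpolation is Proposition~\ref{p:GNdelmer}, and rectifiability of $\partial\X$ enters through $\mathcal{H}^{d-1}(\partial\X)<\infty$.
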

The constants $C_{\rho,\mathcal{Y}}$ may be computed explicitly, see Lemma \ref{l:gluevarjohn} and Remark \ref{r:constants} for an illustration. In Section \ref{s:moredomains} we show that \eqref{e:stabmapjohn2} still holds if $\X$ is a finite union of disjoint John domains with rectifiable boundary.

Proving a converse to Theorem \ref{t:mainjohn} is a subtle problem; we shall not attempt here to give a general characterization of the probability densities $\rho$ on compact domains for which an inequality like \eqref{e:stabpotjohn} (or \eqref{e:stabmapjohn2}) holds. Nevertheless, to illustrate the relevance of the ``John-type" condition in Theorem \ref{t:mainjohn}, we show that in typical examples of non-John domains considered in the literature, the stability of Brenier potentials \eqref{e:stabpotjohn} fails.
\begin{definition}\label{d:holder}
We say that an absolutely continuous probability measure $\rho$ on $\R^d$ has the H\"older potential stability property if there exist $C,q>0$ and $p\in [1,\infty)$ such that for any $\mu,\nu$ supported in the unit ball of $\R^d$,
$$
\|\phi_\mu-\phi_\nu\|_{L^2(\rho)}\leq CW_p(\mu,\nu)^q.
$$
\end{definition}
\begin{theorem}\label{t:counterexample}
For any $d\geq 2$, there exists a non-empty, bounded and path-connected domain $\X\subset\R^d$ such that any probability density $\rho$ which is bounded from above and below by positive constants on $\X$ does not have the H\"older potential stability property.
\end{theorem}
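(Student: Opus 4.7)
The goal is to construct, for each $d\geq 2$, a bounded path-connected non-John domain $\X\subset\R^d$ containing an infinite family of thin ``bottlenecks'' of widths $\delta_n\to 0$ at super-exponential rate, together with, for each $n$, a pair of target measures $(\mu_n,\nu_n)$ in $B(0,1)$ whose Brenier cuts fall inside the $n$-th bottleneck. Since Definition~\ref{d:holder} requires the existence of a \emph{fixed} exponent $q>0$, denying the property for every $(C,q,p)$ requires bottlenecks at arbitrarily fine scales: any fixed $q$ will eventually lose against large enough $n$.

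In $d=2$ (higher $d$ by a product with a ball), I would take $\X = M\cup\bigcup_{n\geq 1}(S_n\cup H_n)$, where $M$ is the unit disk, $S_n$ is a thin rectangular stem of width $\delta_n = 2^{-n^2}$ and length $L_n = 2^{-n}$ attached to $\partial M$ at angle $\theta_n$, and $H_n$ is a small head at the tip of $S_n$. The angles are placed so that $|\theta_n-\theta_m|\gtrsim 2^{-\min(n,m)/2}$, which is compatible with $\theta_n\in [0,2\pi)$ since $\sum 2^{-n/2}<\infty$. The resulting $\X$ is bounded (total area $\sum\delta_n L_n+\sum|H_n|<\infty$) and path-connected, but fails the John condition: reaching a point in $H_n$ from the centre of $M$ forces any path to cross the stem $S_n$ of width $\delta_n\to 0$, so the John constant $\alpha$ must shrink to $0$.

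For each $n$, let $y_1^n = e_{\theta_n}\in\partial B(0,1)$ and $y_0^n = -e_{\theta_n}$. The Brenier potential of a two-Dirac measure $a\delta_{y_1^n}+(1-a)\delta_{y_0^n}$ has the explicit form $\phi(x) = \max(\langle x, y_1^n\rangle - c_1, \langle x, y_0^n\rangle - c_0)$, so its ``cut'' is the hyperplane $\{x:\langle x, e_{\theta_n}\rangle = d\}$ perpendicular to $e_{\theta_n}$, at a position $d$ fixed by mass balance. Choosing $a=a_n$ so that $d$ lies at the midpoint of $S_n$, the angular separation estimate ensures that the only head of $\X$ lying on the far side of the cut is $H_n$, hence the cut slices through $S_n$ alone. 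Setting $\nu_n = (a_n+\eta_n)\delta_{y_1^n}+(1-a_n-\eta_n)\delta_{y_0^n}$ with $\eta_n \sim \delta_n L_n$ shifts the cut along the stem axis by $\Delta_n\sim L_n$ (via the mass-balance relation $\eta_n\sim\delta_n\Delta_n$), keeping it inside $S_n$. Because $\phi_{\mu_n}-\phi_{\nu_n}$ is piecewise constant with a jump of order $|y_1^n-y_0^n|\Delta_n\sim L_n$ across the cut, and both Laguerre cells have $\rho$-mass comparable to $1/2$, the normalization $\int\phi\, d\rho=0$ yields $\|\phi_{\mu_n}-\phi_{\nu_n}\|_{L^2(\rho)}\gtrsim L_n = 2^{-n}$.

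On the other hand, $W_p(\mu_n,\nu_n)\leq 2\eta_n^{1/p}\lesssim 2^{-(n^2+n)/p}$. For every $q>0$ and $p\in[1,\infty)$,
\[
\frac{\|\phi_{\mu_n}-\phi_{\nu_n}\|_{L^2(\rho)}}{W_p(\mu_n,\nu_n)^q}\gtrsim \frac{2^{-n}}{2^{-(n^2+n)q/p}} = 2^{-n+(n^2+n)q/p}\to\infty
\]
as $n\to\infty$, since the $n^2q/p$ term eventually dominates. Hence no H\"older potential stability exponent can hold for any $\rho$ bounded above and below on $\X$. The principal technical hurdle will be the geometric bookkeeping --- showing that for each $n$ the Brenier hyperplane cut of $\mu_n$ slices only $S_n$ and no other mushroom --- which reduces to a projection estimate guaranteeing that, by the angular separation, no head $H_m$ with $m\neq n$ projects past the midpoint of $S_n$ along $e_{\theta_n}$.
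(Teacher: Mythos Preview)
Your construction is close in spirit to the paper's: both build a domain with infinitely many thin bottlenecks and use two-point target measures whose Brenier cut sits inside a chosen bottleneck. The paper uses a linear ``room-and-passage'' chain along the $x_1$-axis, with potentials $\phi_n(x)=|x_1-t_n|$ and $\phi_n'(x)=|x_1-t_n'|$ whose cuts $\{x_1=t_n\}$, $\{x_1=t_n'\}$ both lie in the $n$-th passage; this avoids the angular-separation bookkeeping your radial mushroom arrangement requires.

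There is, however, a genuine error in your variance estimate. You assert that ``both Laguerre cells have $\rho$-mass comparable to $1/2$'', but with the cut at $\{\langle x,e_{\theta_n}\rangle=d\}$ and $d\approx 1+L_n/2>1$, the entire disk $M$ lies on the near side; the far-side cell consists only of half of $S_n$ together with $H_n$, so has $\rho$-mass of order $|H_n|$ --- a quantity you never specified. The correct computation is that $\phi_{\mu_n}-\phi_{\nu_n}$ equals approximately $c$ on the near side (mass $\approx 1$) and $c+2\Delta_n$ on the far side (mass $\approx |H_n|$); the zero-mean condition forces $c\approx -2|H_n|\,\Delta_n$, whence
\[
\|\phi_{\mu_n}-\phi_{\nu_n}\|_{L^2(\rho)}\approx \Delta_n\sqrt{|H_n|}\ \sim\ L_n\sqrt{|H_n|},
\]
not $L_n$. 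This is precisely the role of $\rho(R_{n+1})$ in the paper's lower bound $\Var_\rho(\phi_n-\phi_n')\gtrsim (t_n'-t_n)^2\,\rho(R_{n+1})$, which the paper tracks explicitly and then beats by choosing the passage heights $h_n$ small enough relative to $|t_n-t_n'|^2\lambda(R_{n+1})$.

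Your argument can be repaired: if you fix the heads to decay only polynomially, e.g.\ $|H_n|\asymp 2^{-n}$ (the largest size compatible with your angular spacing and with boundedness of $\X$), then the corrected ratio is $\sim 2^{-3n/2}/2^{-(n^2+n)q/p}$, which still diverges for every $q>0$ and $p\geq 1$. But you must specify $|H_n|$ and carry the factor $|H_n|^{1/2}$ through the estimate; as written, if the heads were taken as small as the stems (say $|H_n|\sim\delta_n$), the conclusion would fail for $q/p<1/2$.
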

At least two kinds of non-John domains $\X$ can be used to prove Theorem \ref{t:counterexample}, and they are probably the most classical examples of non-John domains. The proof we provide is based on the construction of a ``room-and-passage" domain $\X$, a class of bounded domains with thin necks at all scales already used in the literature to provide counterexamples to various spectral and functional properties known to hold in John domains (see \cite{amick}, \cite[pp. 521-523]{couranthilbert} for instance). In Remark \ref{e:autrecontreex} we explain that domains $\X$ with outward cusps may also be used to prove Theorem \ref{t:counterexample}.

\subsection{The case of degenerating densities in bounded domains}

Theorem \ref{t:mainjohn} shows that the convexity assumption in Theorem \ref{t:delmer} may be dramatically relaxed. Our next statements demonstrate that the assumption that the probability density $\rho$ is bounded above and below on its support is not necessary either: we may allow $\rho$ to decay to $0$ or blow-up to $+\infty$ at some controlled rate when approaching the boundary of $\X$, and still obtain comparable stability bounds. 

Our first result in this direction is the following. This result is new even for convex $\X$. 
\begin{theorem}\label{t:explosebord}
Let $\rho$ be a probability density over a bounded Lipschitz domain $\X\subset\R^d$. Assume that there exist $c_1,c_2>0$ and $\delta>-1$ such that for any $x\in\X$,
\begin{equation}\label{e:encadrement}
c_1 \dist(x,\partial\X)^{\delta} \leq \rho(x)\leq c_2 \dist(x,\partial\X)^{\delta}.
\end{equation}
Let $\mathcal{Y}\subset\R^d$ be a compact set. Then there exists $C_{\rho,\mathcal{Y}}>0$ such that for any probability measures $\mu,\nu$ supported in $\mathcal{Y}$,
\begin{align}
\|\phi_\mu-\phi_\nu\|_{L^2(\rho)}&\leq C_{\rho,\mathcal{Y}}W_1(\mu,\nu)^{\frac12}\label{e:explstabpoto}\\
\|T_\mu-T_\nu\|_{L^2(\rho)}&\leq C_{\rho,\mathcal{Y}} W_1(\mu,\nu)^{\frac16 - \delta'} \label{e:1-8C}
\end{align}
where  $\delta'=\frac{|\delta|}{6}$ if $-1<\delta\leq 0$, and $\delta'=\frac{\delta}{12(1+\delta)}$ if $\delta\geq 0$.
\end{theorem}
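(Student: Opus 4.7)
The plan is to combine the Whitney decomposition strategy used for Theorem \ref{t:mainjohn} with careful bookkeeping of the weight $\rho\asymp\dist(\cdot,\partial\X)^\delta$. Since $\X$ is a bounded Lipschitz domain (hence a John domain), it admits a Whitney decomposition into dyadic cubes $\{Q_i\}_{i\in I}$ with diameters $r_i$ comparable to $\dist(Q_i,\partial\X)$; on each $Q_i$ the density $\rho$ is comparable to the constant $r_i^\delta$, so locally one is in the convex-with-bounded-density setting of Theorem \ref{t:delmer}, with $r_i$-dependent constants. The assumption $\delta>-1$ is precisely what ensures that $\sum_i r_i^{d+\delta}\asymp\rho(\X)<\infty$.

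For the potential estimate \eqref{e:explstabpoto}, I would apply on each $Q_i$ the local convex variance inequality behind Theorem \ref{t:delmer}, after renormalizing $\rho\vert_{Q_i}$ to a probability measure; this yields a local bound of the form $\Var_{\rho\vert_{Q_i}}(\phi_\mu-\phi_\nu)\leq C(r_i)W_1(\mu,\nu)^{1/2}$, the Lipschitz-type factor being absorbed using that $\mu,\nu$ are supported in the compact set $\mathcal{Y}$. The global variance of $\phi_\mu-\phi_\nu$ under $\rho$ is then reconstructed from the local ones via the Whitney-tree gluing of Theorem \ref{t:mainjohn}: cross-cube jumps are controlled along chains of adjacent cubes and summed with weights $\rho(Q_i)\asymp r_i^{d+\delta}$. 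The finiteness of $\sum_i r_i^{d+\delta}$ guarantees no exponent loss, producing \eqref{e:explstabpoto} with the same rate $W_1^{1/2}$ as in the bounded-density case.

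The map estimate \eqref{e:1-8C} is deduced from the potential estimate by a local gradient-vs-potential interpolation. Convexity of $\phi_\mu,\phi_\nu$ together with Lipschitz boundedness on $\mathcal{Y}$ yield, on each Whitney cube $Q_i$, an inequality of the form $\|T_\mu-T_\nu\|_{L^2(\rho\vert_{Q_i})} \leq C r_i^{-\alpha}\|\phi_\mu-\phi_\nu\|_{L^2(\rho\vert_{Q_i})}^{1/3}$, with a fixed $\alpha>0$ coming from the convex interpolation of \cite{delmer}. The main obstacle, and the source of the loss $\delta'$, is then to aggregate these local estimates despite the divergence of $r_i^{-\alpha}$ for cubes near $\partial\X$. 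A natural route is to split $\X$ into a ``deep'' region $\X_\eta=\{\dist(\cdot,\partial\X)\geq\eta\}$ and a boundary strip $\X\setminus\X_\eta$, estimating the deep part with the potential bound and the boundary strip trivially via $\|T_\mu-T_\nu\|_\infty\leq\diam(\mathcal{Y})$ combined with $\rho(\X\setminus\X_\eta)\asymp\eta^{1+\delta}$.

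Optimizing the truncation parameter $\eta$ then produces the exponent $1/6-\delta'$. The two expressions for $\delta'$ correspond to the two regimes in which the balance is struck differently: in the blow-up regime $-1<\delta\leq 0$ the mass of the boundary strip dominates the loss, yielding $\delta'=|\delta|/6$; while in the decay regime $\delta\geq 0$ the interior interpolation is penalized by the density bounds on $\X_\eta$, yielding the smaller loss $\delta'=\delta/(12(1+\delta))$. At $\delta=0$ both formulas give $0$, consistently recovering the bounded-density rate $1/6$ of Theorems \ref{t:delmer} and \ref{t:mainjohn}.
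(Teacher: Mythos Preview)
Your potential-stability sketch is essentially the paper's argument: verify that the Boman cubes built from the Whitney decomposition of the John domain $\X$ satisfy the hypotheses of Theorem~\ref{t:varineqjohn}, then conclude via Kantorovich--Rubinstein. One genuine step is missing, though: the gluing machinery (Lemma~\ref{l:gluevarjohn}) requires the doubling condition~\eqref{e:ctdoubling}, namely $\rho(\ct Q)\leq D\rho(Q)$ uniformly in $Q\in\mathcal{F}$. For a weight $\rho\asymp\dist(\cdot,\partial\X)^\delta$ this is not automatic, and the paper spends Lemma~\ref{l:dddd} on it; in the range $-1<\delta<0$ the dilated cube $\ct Q$ may touch $\partial\X$ where the density blows up, and the argument genuinely uses bi-Lipschitz boundary charts of the Lipschitz domain. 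Your sentence ``finiteness of $\sum_i r_i^{d+\delta}$ guarantees no exponent loss'' is not what drives the gluing.

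For the map estimate your sketch is muddled, and the paper's route is different. You write the local interpolation as $\|T_\mu-T_\nu\|_{L^2(\rho|_{Q_i})}\leq Cr_i^{-\alpha}\|\phi_\mu-\phi_\nu\|_{L^2(\rho|_{Q_i})}^{1/3}$ with $\alpha>0$, but Proposition~\ref{p:GNdelmer} on a cube of sidelength $r_i$, after converting Lebesgue norms to $\rho\asymp r_i^\delta$, actually produces the factor $r_i^{(d-1+\delta)/3}$, a \emph{positive} power for $d\geq2$ and $\delta>-1$; there is no local blow-up to fight. (If one sums cube-by-cube with H\"older, using $\sum_i r_i^{d-1+\delta}<\infty$ for $\delta>0$, one in fact obtains exponent $1/6$ with no loss in that regime, stronger than the theorem.) Your subsequent explanation of the two $\delta'$ formulas (``penalized by the density bounds on $\X_\eta$'') is not a cube-by-cube mechanism at all; it is precisely the paper's argument, which proceeds globally: define $\X_\varepsilon=\{\dist(\cdot,\partial\X)\leq\varepsilon\}$, replace $\rho$ by the bounded density $\rho_\varepsilon=\rho_{|\X\setminus\X_\varepsilon}+\rho(x_0)\chi_{\X_\varepsilon}$ on all of $\overline{\X}$, apply Proposition~\ref{p:GNdelmer} once on $K=\X$, and pay for the conversion between $\lambda$ and $\rho_\varepsilon$ through $m_{\rho_\varepsilon}^{-1/2}\asymp\varepsilon^{-\delta/2}$ (when $\delta\geq0$) or $M_{\rho_\varepsilon}^{3/2}\asymp\varepsilon^{3\delta/2}$ (when $\delta\leq0$). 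Optimizing $\varepsilon$ then gives the stated $\delta'$; the strip estimate $\rho(\X_\varepsilon)\leq C\varepsilon^{1+\delta}$ is used on both sides.
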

The above condition $\delta>-1$ is necessary since $\rho$ is a probability measure. In contrast with Theorem \ref{t:explosebord}, the stability of optimal transport maps can be totally lost if the blow-up of the density $\rho$ is too strong: we exhibit in \cite{letrouit} an unbounded density $\rho$ on the unit ball of $\R^d$ such that for any $C,q>0$, any $p\geq 1$ and any bounded set $\mathcal{Y}\subset\R^d$, the bound 
$$
\forall \mu,\nu\in\mathcal{P}(\mathcal{Y}), \qquad \|T_\mu-T_\nu\|_{L^2(\rho)}\leq CW_p(\mu,\nu)^q
$$
fails.

Our second result concerning densities degenerating at the boundary of a bounded domain is motivated by works in statistics (starting with \cite{galichon}) which define notions of multivariate quantiles and statistical depth via optimal transport theory. These works aim at addressing the well-known problem of finding  a good analogue in dimension $d\geq 2$ of univariate distribution functions, despite the absence of a canonical ordering relation in $\R^d$. Compared to previous notions considered in the literature, the notion introduced in \cite{galichon} and further studied e.g. in \cite{hallin}, \cite{figalli}, enjoys  inferential properties expected from distribution and quantile functions in $\R^d$. Given a probability density $\mu$, the vector quantile map defined in \cite{galichon} is the quadratic optimal transport map from the spherical uniform distribution $\rho$ defined in Theorem \ref{t:hallin} to $\mu$: the quantiles of $\mu$ are defined by pushing forward the quantiles of $\rho$. The main theoretical result of \cite{galichon}, namely \cite[Theorem 3.1]{galichon}, establishes that for any compactly supported probability density $\mu$, the empirical vector quantile maps defined via empirical approximations of $\mu$ converge to the vector quantile map associated to $\mu$ (actually, \cite[Theorem 3.1]{galichon} shows that convergence holds even if $\rho$ itself is replaced by empirical approximations). Our next statement, when applied to an empirical approximation $\nu$ of $\mu$, quantifies the rate of convergence:

\begin{theorem}\label{t:hallin}
Let $\X=B(0,1)\setminus\{0\}$ where $B(0,1)$ is the closed unit ball of $\R^d$. Consider the probability density $\rho(x)=c_d|x|^{1-d}\chi_{\X}(x)$ where $\chi_{\X}$ is the characteristic function of $\X$ and $c_d$ is a normalizing constant. Let $\mathcal{Y}\subset\R^d$ be a compact set. Then there exists $C_{d,\mathcal{Y}}>0$ such that for any probability measures $\mu,\nu$ supported in $\mathcal{Y}$,
\begin{align}
\|\phi_\mu-\phi_\nu\|_{L^2(\rho)}&\leq C_{d,\mathcal{Y}}W_1(\mu,\nu)^{\frac12}\label{e:hallinpot}\\
\|T_\mu-T_\nu\|_{L^2(\rho)}&\leq C_{d,\mathcal{Y}} W_1(\mu,\nu)^{\frac{1}{6d}}. \label{e:hallinmap}
\end{align}
\end{theorem}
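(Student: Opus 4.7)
The plan is to handle the singularity of $\rho(x) = c_d |x|^{1-d}$ at the origin by a dyadic decomposition of $\X$ around $0$, reducing to the already-treated John domain case on each scale and gluing across scales. Set
$$A_k := \{x \in \R^d : 2^{-k-1} < |x| \leq 2^{-k}\}, \quad k \geq 0,$$
so that $\X$ is the disjoint union of the $A_k$ up to a Lebesgue null set. Each $A_k$ is a smoothly bounded John domain of fixed aspect ratio, with rectifiable boundary, on which the density satisfies $c_d 2^{k(d-1)} \leq \rho \leq c_d 2^{(k+1)(d-1)}$. The crucial observation is that $\rho(A_k) = c_d |S^{d-1}|\, 2^{-k-1}$, so the $\rho$-masses of the shells form a summable geometric sequence; this is the reason to expect \eqref{e:hallinpot} to hold with the same exponent $1/2$ as in the non-degenerate case.

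For the potential stability \eqref{e:hallinpot}, I would apply the local variance inequality underlying Theorem \ref{t:mainjohn} on each $A_k$, using the radial rescaling $x \mapsto 2^k x$ (which sends $A_k$ to the fixed reference annulus $A_0$) to obtain scale-invariant constants. This yields a local estimate of the form
$$\bigl\|(\phi_\mu - \phi_\nu - c_k)\,\chi_{A_k}\bigr\|_{L^2(\rho)} \leq C \sqrt{\rho(A_k)}\; W_1(\mu,\nu)^{1/2}$$
for suitable centering constants $c_k$. The centerings on consecutive annuli can be compared via a Poincaré-type inequality on a common spherical shell separating $A_k$ and $A_{k+1}$, giving a telescoping control of the $c_k$. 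Summing the local estimates and using $\sum_k \rho(A_k) < \infty$ produces a global variance inequality and hence \eqref{e:hallinpot}. This is conceptually the same gluing strategy as in the proof of Theorem \ref{t:mainjohn}, but organized geometrically around the singular point rather than through a Whitney decomposition.

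For the map stability \eqref{e:hallinmap}, I would split $\X$ at some radius $r \in (0, 1/2)$ to be optimized:
$$\|T_\mu - T_\nu\|_{L^2(\rho)}^2 = \int_{\{|x|\leq r\}}|T_\mu - T_\nu|^2\rho\, dx + \int_{\{r < |x| \leq 1\}}|T_\mu - T_\nu|^2\rho\, dx.$$
On the singular piece, since $T_\mu, T_\nu$ take values in $\mathcal{Y}$, we have the trivial bound $\diam(\mathcal{Y})^2\, \rho(\{|x|\leq r\}) \lesssim r$. On the annular piece $\{r < |x| \leq 1\}$, the density is bounded above by $c_d r^{1-d}$ and below by $c_d$, and the domain is a John domain with rectifiable boundary; Theorem \ref{t:mainjohn}, with the explicit dependence of its constants on the density ratio $r^{1-d}$ tracked carefully, gives a quantitative bound in which this ratio enters to a suitable power. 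Optimizing the trade-off between the two terms in $r$ then produces the exponent $1/(6d)$.

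The main obstacle will be the quantitative tracking of how the constants in Theorem \ref{t:mainjohn} depend on the ratio $M/m$ between the upper and lower bounds of $\rho$: this is needed both for the telescoping across annuli in \eqref{e:hallinpot} (where scale invariance of local constants after the radial rescaling must be verified to avoid any loss in the geometric summation) and for the trade-off producing \eqref{e:hallinmap}, whose exponent $1/(6d)$ depends on a precise polynomial scaling. A secondary issue is that $\X$ itself is not globally a John domain, since paths approaching the puncture at $0$ violate the distance-to-boundary condition; but this is precisely what the dyadic decomposition is designed to handle, the $\rho$-negligibility of $\{0\}$ ensuring that the missing point plays no role in the $L^2(\rho)$ analysis.
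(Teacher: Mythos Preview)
Your approach to the map estimate \eqref{e:hallinmap} coincides with the paper's: excise $B(0,\varepsilon)$, use the trivial bound $\rho(B(0,\varepsilon))=\varepsilon$ on the inner piece, apply Proposition~\ref{p:GNdelmer} on $K_\varepsilon=B(0,1)\setminus B(0,\varepsilon)$ with $M_{\rho_\varepsilon}=c_d\varepsilon^{1-d}$, and optimise in $\varepsilon$.

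For the potential estimate \eqref{e:hallinpot} your route is genuinely different. The paper does \emph{not} work annulus by annulus; it observes that $\X=B(0,1)\setminus\{0\}$ already fits into the framework of Theorem~\ref{t:varineqjohn}. The Whitney cubes of $\X$ automatically refine both near the outer sphere and near the puncture, the ratio $M_{\rho_Q}/m_{\rho_Q}$ is uniformly bounded on each cube by \eqref{e:bomanloin}, and the only real work is the one-line doubling check $\rho(5B\sqrt{d}\,Q)\leq D\rho(Q)$. One application of Theorem~\ref{t:varineqjohn} and Kantorovich--Rubinstein then gives \eqref{e:hallinpot} directly, with no telescoping of means. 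Your annular scheme is closer in spirit to the spectral-graph gluing of Section~\ref{s:proofexception}; it is more hands-on but requires separate bookkeeping that the Whitney/Boman machinery absorbs automatically.

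There is a real gap in your annular argument. The displayed local estimate
$\|(\phi_\mu-\phi_\nu-c_k)\chi_{A_k}\|_{L^2(\rho)}\leq C\sqrt{\rho(A_k)}\,W_1(\mu,\nu)^{1/2}$
does not follow from applying Kantorovich--Rubinstein on $A_k$: the local variance inequality on $A_k$ bounds $\Var_{\tilde\rho_k}(\phi_\mu-\phi_\nu)$ by the pairing $\langle\psi_\mu-\psi_\nu\mid (\nabla\phi_\nu)_\#\tilde\rho_k-(\nabla\phi_\mu)_\#\tilde\rho_k\rangle$, and $W_1$ between these local pushforwards has no relation to $W_1(\mu,\nu)$. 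What saves you is that these pairings are nonnegative and, after multiplying by $\rho(A_k)$, sum to $\langle\psi_\mu-\psi_\nu\mid\nu-\mu\rangle\leq\diam(\X)W_1(\mu,\nu)$; this is exactly the mechanism \eqref{e:fatal}--\eqref{e:fatal2} in the paper, and you must invoke it explicitly. A second issue: your annuli $A_k$ are disjoint, so there is no ``common spherical shell'' on which to compare $c_k$ and $c_{k+1}$; you need overlapping annuli (say $\{2^{-k-2}<|x|\leq 2^{-k}\}$) for that step to make sense. Finally, a factual correction: $\X$ \emph{is} a John domain (radial paths away from the puncture satisfy Definition~\ref{d:john}); the obstruction to applying Theorem~\ref{t:mainjohn} directly is that $\rho$ is unbounded near $0$, not the geometry.
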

The exponent in the upper bound \eqref{e:hallinmap} depends on $d$, and we do not know if it possible to get rid of this dependence. We explain in Remark \ref{r:expl} that our proof techniques could potentially handle other families of probability measures $\rho$ whose density tends to $0$ or $+\infty$ than those considered in Theorems \ref{t:explosebord} and \ref{t:hallin}. We chose the above statements for their simplicity and their relevance regarding applications.

 
\subsection{Generalized Cauchy distributions}\label{s:exceptions}
We finally establish stability estimates for Brenier potentials and Brenier maps in the case where $\rho$ belongs to a family of probability measures on $\R^d$ which are not log-concave. Namely, we handle generalized Cauchy distributions of the form $\rho(x)=c(x)\langle x\rangle^{-\beta}$ where here and in the sequel
$$
\langle x\rangle = (1+|x|^2)^{1/2}.
$$
From now on, we assume that $\beta>d+2$. This ensures that $\rho$ has finite second moment, and is in particular necessary for the  left-hand side of \eqref{e:yeah} to be finite.

\begin{theorem}\label{t:powlawdist}
Assume that $\rho(x)=c(x)\langle x\rangle^{-\beta}$ on $\R^d$, with $\beta>d+2$ and $0<m\leq c(x)\leq M<+\infty$.  Let $\mathcal{Y}\subset\R^d$ be a compact set. Then, there exists $C_{\rho,\mathcal{Y}}>0$ such that for any probability measures $\mu,\nu$ supported in $\mathcal{Y}$,
\begin{align}
\|\phi_\mu-\phi_\nu\|_{L^2(\rho)}&\leq C_{\rho,\mathcal{Y}}W_1(\mu,\nu)^{\theta} \label{e:yeah}\\
\|T_\mu-T_\nu\|_{L^2(\rho)}&\leq C_{\rho,\mathcal{Y}}W_1(\mu,\nu)^{\theta'}\label{e:powlawmap}
\end{align}
where $\theta=\frac12 (1-\frac{2}{\beta-d})>0$ and $\theta'=\frac{\beta-d-2}{8\beta-2d-4}>0$. Moreover, the exponent $\theta=\frac12 (1-\frac{2}{\beta-d})$ in \eqref{e:yeah} is sharp.
\end{theorem}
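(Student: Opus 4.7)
The plan is to prove \eqref{e:yeah} first by a truncation–gluing argument that turns the bounded-support stability of Theorem \ref{t:mainjohn} into a global bound tailored to the polynomial decay of $\rho$, then derive \eqref{e:powlawmap} from \eqref{e:yeah} by a McCann-type interpolation with the pointwise bound on $T_\mu - T_\nu$, and finally verify sharpness by a direct construction.

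For the potential bound, I would first note that since $\mu,\nu$ are supported in $\mathcal{Y}$, the Brenier maps $T_\mu = \nabla\phi_\mu$ and $T_\nu = \nabla\phi_\nu$ take values in $\Conv(\mathcal{Y})$ $\rho$-a.e., so $f := \phi_\mu - \phi_\nu$ is globally Lipschitz on $\R^d$ with constant $L_\mathcal{Y} := \diam(\Conv(\mathcal{Y}))$. Together with the normalization $\int f\, d\rho = 0$ and the finiteness of the first moment of $\rho$ (which follows from $\beta > d+2$), this yields the pointwise control $|f(x)| \leq C_\rho + L_\mathcal{Y}|x|$ and hence the tail estimate
\[
\int_{|x|>\Lambda} f^2\, d\rho \;\lesssim_{\rho,\mathcal{Y}}\; \int_\Lambda^\infty r^{d+1-\beta}\,dr \;\lesssim\; \Lambda^{d+2-\beta}
\]
for any $\Lambda > 1$. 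For the interior, I would introduce the normalized restriction $\rho_\Lambda := \rho|_{B_\Lambda}/\rho(B_\Lambda)$, a density on the convex (hence John) domain $B_\Lambda$ with upper bound comparable to $M$ and lower bound of order $\Lambda^{-\beta}$. Crucially, $T_\mu|_{B_\Lambda}$ and $T_\nu|_{B_\Lambda}$ remain gradients of convex functions and are therefore still Brenier maps for the transport from $\rho_\Lambda$ to $\tilde\mu_\Lambda := T_\mu\#\rho_\Lambda$ and $\tilde\nu_\Lambda := T_\nu\#\rho_\Lambda$, both supported in $\mathcal{Y}$. Applying Theorem \ref{t:mainjohn} to this restricted problem, while quantitatively tracking the dependence of the constant $C(\Lambda,\mathcal{Y})$ on the density ratio $\Lambda^\beta$ and on the diameter $\Lambda$, gives a bound of the form $\int_{B_\Lambda}(f-\bar f_\Lambda)^2\,d\rho \leq C(\Lambda,\mathcal{Y})\, W_1(\tilde\mu_\Lambda, \tilde\nu_\Lambda)$ where $\bar f_\Lambda$ is the $\rho_\Lambda$-mean of $f$. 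A Kantorovich–Rubinstein duality argument, using that any $1$-Lipschitz function on $\mathcal{Y}$ extends to a $1$-Lipschitz function on $\R^d$, yields $W_1(\tilde\mu_\Lambda, \tilde\nu_\Lambda) \lesssim W_1(\mu,\nu) + \Lambda^{d-\beta}$; the residual mean square $\bar f_\Lambda^2\,\rho(B_\Lambda)$ is also controlled by the tail through the identity $\bar f_\Lambda\,\rho(B_\Lambda) = -\int_{B_\Lambda^c} f\,d\rho$. Summing the contributions gives $\|f\|_{L^2(\rho)}^2 \lesssim C(\Lambda,\mathcal{Y}) W_1(\mu,\nu)^{1/2} + \Lambda^{d+2-\beta}$, and optimizing $\Lambda$ produces exactly $\theta = \tfrac{1}{2}\bigl(1 - \tfrac{2}{\beta-d}\bigr)$, provided the scaling of $C(\Lambda,\mathcal{Y})$ with $\Lambda$ matches the expected polynomial one.

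The map stability \eqref{e:powlawmap} follows by a second truncation: the tail $\int_{B_\Lambda^c}|T_\mu - T_\nu|^2\,d\rho \leq L_\mathcal{Y}^2\, \rho(B_\Lambda^c) \lesssim \Lambda^{d-\beta}$ is immediate, while on $B_\Lambda$ I would interpolate the interior potential bound against the pointwise estimate $\|T_\mu - T_\nu\|_\infty \leq L_\mathcal{Y}$ using the standard McCann-type inequality for gradients of convex functions (exactly as in the passage from \eqref{e:stabpotjohn} to \eqref{e:stabmapjohn2}); balancing again in $\Lambda$ yields $\theta'$. For the sharpness of $\theta$, I would construct a one-parameter family $\mu_\varepsilon,\nu_\varepsilon$ supported in a small region with $W_1(\mu_\varepsilon,\nu_\varepsilon) = \varepsilon$ such that $\phi_{\mu_\varepsilon} - \phi_{\nu_\varepsilon}$ is essentially a linear function of $x$ (after centering), so that $\|\phi_{\mu_\varepsilon} - \phi_{\nu_\varepsilon}\|^2_{L^2(\rho)}$ reduces to a weighted integral whose size is governed precisely by the tail $r^{d+1-\beta}$ of $\rho$, producing $\varepsilon^{2\theta}$. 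The principal obstacle I anticipate is the quantitative tracking of the constant $C(\Lambda,\mathcal{Y})$ in Theorem \ref{t:mainjohn}: extracting the correct polynomial dependence on the density ratio and on $\Lambda$ so that the optimization lands exactly at $\theta = \tfrac{1}{2}\bigl(1 - \tfrac{2}{\beta-d}\bigr)$ is the crux of the argument and requires revisiting the Whitney/spectral-graph gluing used in the proof of Theorem \ref{t:mainjohn}.
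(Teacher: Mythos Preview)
Your truncation-and-optimize framework is exactly the right outer shell, and it matches the paper's Section~6. But the step you flag as ``the crux'' is not a matter of tracking constants in Theorem~\ref{t:mainjohn}: it fails outright, and for a structural reason. On $B_\Lambda$ the Whitney cubes are adapted to $\partial B_\Lambda$, so the central cubes have sidelength $\sim\Lambda$ and the density ratio $M_{\rho_Q}/m_{\rho_Q}$ inside them is $\sim\Lambda^\beta$. Hence the constant $E$ in \eqref{e:supratio} (and with it $C,D$) scales like $\Lambda^\beta$, and the variance inequality you obtain is
\[
\Var_{\rho_\Lambda}(\phi_\mu-\phi_\nu)\;\lesssim\;\Lambda^{\beta+1}\,\sca{\psi_\mu-\psi_\nu}{\tilde\nu_\Lambda-\tilde\mu_\Lambda}\;\lesssim\;\Lambda^{\beta+2}\,W_1(\tilde\mu_\Lambda,\tilde\nu_\Lambda).
\]
Combined with $W_1(\tilde\mu_\Lambda,\tilde\nu_\Lambda)\lesssim W_1(\mu,\nu)+\Lambda^{d-\beta}$, the residual term becomes $\Lambda^{\beta+2}\cdot\Lambda^{d-\beta}=\Lambda^{d+2}$, which \emph{blows up}. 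No choice of $\Lambda$ gives anything. The paper spells this out explicitly in Section~\ref{s:warmup} as the reason a new idea is needed.

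The fix is not to track constants more carefully but to change the decomposition so that the density ratio per cell is \emph{uniform in $\Lambda$}. The paper does this with dyadic annular sectors $Q_{(J,\sigma)}=\mathrm{Conv}\bigl((B(0,2^{J+1})\setminus B(0,2^{J-1}))\cap\H_\sigma\bigr)$, on which $M_{\rho_Q}/m_{\rho_Q}$ is bounded independently of $J$; this brings the variance constant down to $C_{\rho,\mathcal{Y}}\,r$ (diameter only), see \eqref{e:varineqrhos}. These cells do not satisfy a Boman chain condition, so the gluing is done via a weighted-graph Cheeger inequality (Sections~\ref{s:laplagraphs}--\ref{s:powerlaws}) with a spectral gap uniform over the truncations $G_r$. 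With the improved constant, the optimization does land at $\theta=\tfrac12(1-\tfrac{2}{\beta-d})$.

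Your sharpness sketch is also off: if $\phi_\mu-\phi_\nu$ is essentially linear then $\|\phi_\mu-\phi_\nu\|_{L^2(\rho)}^2$ scales with the fixed second moment of $\rho$, giving exponent~$1$, not $\theta$. The paper instead takes $\phi_r(x)=\max(|x|-r,0)-c_r$, so that $(\nabla\phi_r)_\#\rho$ is a Dirac at $0$ plus a uniform measure on the sphere; then $W_1\bigl((\nabla\phi_r)_\#\rho,(\nabla\phi_{2r})_\#\rho\bigr)\asymp r^{d-\beta}$ while $\|\phi_{2r}-\phi_r\|_{L^2(\rho)}\asymp r^{(d+2-\beta)/2}$, which matches $W_1^\theta$.
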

With this result, we want in particular to challenge a natural idea that could arise from our results of the previous sections and their proofs, namely that there could exist a strict equivalence between stability estimates for Brenier potentials with source measure $\rho$ and the fact that the  Poincaré(-Wirtinger) inequality holds for $\rho$. But Theorem \ref{t:powlawdist} disproves this conjectural relation, since it is known that the Poincaré inequality does not hold for generalized Cauchy distributions, see e.g. \cite{bobkovledoux}. However, this does not rule out the possibility of a weaker relationship between the two properties: as emphasized in Section \ref{s:weightpoincare}, generalized Cauchy distributions satisfy a weighted Poincaré inequality, which may be recovered as a direct corollary of our proof. 

One more motivation for Theorem \ref{t:powlawdist} is that its proof requires to develop a different strategy compared to our other results, and we believe this strategy to be of independent interest. See Sections \ref{s:prooftechniques} and \ref{s:warmup} for details. This strategy may be used for instance to reprove weaker versions of Theorems \ref{t:logconcave} and \ref{t:mainjohn}; we shall not pursue this here. Refining this strategy, it is possible to prove analogous stability estimates for Brenier potentials and Brenier maps for other families of source measures which do not satisfy a Poincaré inequality, for instance $\rho(x)=c(x)e^{-\kappa |x|^{\alpha}}$ with $0<\alpha<1$, $\kappa>0$, and $0<m\leq c(x)\leq M<+\infty$. We do not pursue this here either, we only give a few more details in Remark \ref{r:exppowdist}.

\subsection{Proof strategies and organization of the paper}\label{s:prooftechniques}
The starting point of our proofs is to reinterpret the quantity $\|\phi_\mu-\phi_\nu\|^2_{L^2(\rho)}$ as a variance ${\rm Var}_\rho(\phi_\mu-\phi_\nu)$, due to the fact that both $\phi_\mu$ and $\phi_\nu$ have vanishing mean with respect to $\rho$. In Section \ref{s:kantofunct} we establish a variance inequality, i.e., an upper bound on this variance, in the case where $\rho$ is supported in a compact convex set and bounded above and below by positive constants. This variance inequality is a refinement of the results of \cite{delmer}. We provide a proof which simplifies the approach of \cite{delmer}. In Section \ref{s:vrailogconc} we deduce Theorem \ref{t:logconcave} via truncation arguments.

To prove the other results we follow a strategy which could as well be used to prove Poincaré-Wirtinger inequalities of the form ${\rm Var}_\rho(f)\leq C\int |\nabla f|^2 d\rho$ where here and in the sequel
$$
\Var_\rho(f)=\inf_{c\in \R}\int_{\R^d}|f-c|^2d\rho.
$$
Namely, we rely on two ingredients: firstly, the upper bound on the variance in convex sets established in Section \ref{s:kantofunct}; secondly, an argument to glue together ``local" variance inequalities in small convex sets and deduce an upper bound on the ``global" variance, i.e., the variance in $\X$. 

Our main contribution is related to the second ingredient: we extend already existing techniques and develop new methods to glue together ``local" variance inequalities. Overall, the leitmotiv of this work is that if a function $f$ does not vary much in each set $Q$ of a family $\mathcal{F}$, and if these sets intersect enough, then $f$ does not vary much in the union of the sets $Q\in\mathcal{F}$. 
To turn this rough idea into proofs, we use domain decompositions techniques which we describe below. Let us already mention that once the global variance inequality is proved, we deduce almost immediately the stability estimates on the difference of Brenier potentials $\|\phi_\mu-\phi_\nu\|_{L^2(\rho)}$ via Kantorovich-Rubinstein duality. The stability of Brenier maps requires more work, it essentially relies on reverse Poincaré inequalities for the difference of two convex functions proved in \cite{delmer} (i.e., $\|\nabla\phi_\mu-\nabla\phi_\nu\|_{L^2}\leq C\|\phi_\mu-\phi_\nu\|_{L^2}^\theta$).

A key point is thus to understand how to glue together variance inequalities. For this, we develop two strategies, which are compared in Section \ref{s:strategy}.

Our first method is set out in Section \ref{s:variancegen}, see notably Theorem \ref{t:varineqjohn}. To prove a global variance inequality, we take inspiration from the proof of Sobolev-Poincaré inequalities in John domains (\cite{bojarski}, \cite{sobolevmet}): we rely on the Whitney decomposition and its good properties in John domains, in particular the existence of a decomposition satisfying a Boman chain condition in the spirit of \cite{boman}, \cite{bojarski}. In Section \ref{s:proofjohn} we use this to prove Theorems \ref{t:mainjohn}, \ref{t:explosebord} and \ref{t:hallin}, which requires in each case to establish some doubling property for the probability measure $\rho$. The counterexample to Hölder potential stability in non-John domains used to prove Theorem \ref{t:counterexample} is provided in Section \ref{s:counterexample}. As already mentioned, it is based on the construction of an appropriate ``room-and-passage" domain.

Our second method for gluing variance inequalities is developed in Section \ref{s:proofexception} and allows us to prove Theorem \ref{t:powlawdist}. It consists in decomposing $\X$ into convex sets and constructing a weighted combinatorial graph from this decomposition, with one vertex for each convex set and one edge for each pair of overlapping convex sets. To be able to control the variance in $\X$ by the variances in the convex sets we need enough overlap between these sets, in other words that the weighted graph is sufficiently well connected. This connectedness is measured via the spectral gap of the graph Laplacian, which we prove to be strictly positive via a Cheeger inequality proved in Appendix \ref{a:cheeger}.

Apart from the families of $\rho$'s handled in Section \ref{s:exceptions}, all the $\rho$'s considered in the present work support an $L^2$-Poincaré-Wirtinger inequality. These Poincaré-Wirtinger inequalities are actually immediate corollaries of the techniques of this paper, see e.g. Section \ref{s:uniqueness}. It might seem natural to conjecture that if $\rho$ supports a Poincaré(-Wirtinger) inequality, then not only the Brenier potentials are unique (which is a consequence of connectedness of the support, see Section \ref{s:uniqueness}), but they are even stable. However, this is totally unclear at the present moment, and we know by Theorem \ref{t:powlawdist} that the converse is false.

Finally, we believe that the techniques of the present paper, which show that it is sufficient to establish local variance inequalities in order to prove global stability results, might foster progress in other directions, for instance regarding the stability of optimal transport maps for more general costs (and on Riemannian manifolds), the random matching problem and rates of convergence for the Sinkhorn algorithm.

\subsection{Notation} Throughout the paper, $\N$ denotes the set of non-negative integers. For any $n$ in the set of positive integers $\N^*$, the notation $[n]$ stands for $[n]=\{1,\ldots,n\}$. The characteristic function of a set $S\subset\R^d$ is denoted by $\chi_S$. The Euclidean distance in $\R^d$ is denoted by $\dist$. The Euclidean scalar product is denoted by  $\sca{\cdot}{\cdot}$, and this notation also stands for the duality pairing between continuous functions with compact support and real-valued Radon measures (the distinction between the two is clear from the context). The Euclidean (closed) ball of center $0$ and radius $r\geq 0$ is denoted by $\Ball_r:= B(0,r)$. The Lebesgue measure on $\R^d$ is denoted by $\lambda$. The support of a measure $\rho$ is denoted by ${\rm spt}(\rho)$. Absolutely continuous measures on $\R^d$ are sometimes identified with their density with respect to the Lebesgue measure.

\subsection{Acknowledgments} We thank an anonymous reviewer whose careful reading and detailed report helped us improve the manuscript in several places. We also thank Max Fathi, Piotr Haj\l{}asz and Radosław Wojciechowski for discussions related to this work. The authors acknowledge the support of the Agence nationale de la recherche, through the PEPR PDE-AI project (ANR-23-PEIA-0004). The first author would like to thank for its hospitality the Courant Institute of Mathematical Sciences in New York, where part of this work was done.

\section{Stability for log-concave sources}
\label{s:stablogconc}

\subsection{Variance inequality in compact convex sets} \label{s:kantofunct} 
In this section, we start with the case where $\rho$ is supported on a compact and convex set. We establish a variance inequality which is a key ingredient for most of the proofs in the present work. 

Given a function $\psi:\Rsp^d\to\Rsp\cup\{+\infty\},$ we recall that its convex conjugate is defined as
\begin{equation}\label{eq:conv-conjugate}
\psi^*(x) = \sup_{y\in\Rsp^d}\ \sca{x}{y} - \psi(y)
\end{equation}
If $\psi \in\Class^0(\YSp)$ where $\YSp\subset\R^d$ (thus $\psi$ is a priori not defined on $\Rsp^d$), we implicitly extend $\psi$ by $+\infty$ outside of $\YSp$ when computing $\psi^*$. This is equivalent to taking the supremum in \eqref{eq:conv-conjugate} over $y\in\YSp$.

The following statement is a minor modification of \cite{delmer}, \cite{mischlertrevisan}. Due to its importance in what follows, we provide here a complete proof.
\begin{theorem}
\label{t:stability-compact}
Let $Q\subset\R^d$ be a compact convex set with non-empty interior, let $\sigma$ be a logarithmically-concave probability density over $Q$ and let $\rho$ be another probability density over $Q$ satisfying $m_\rho \sigma \leq \rho \leq M_\rho \sigma$ for some constants $M_\rho\geq m_\rho>0$.
Let $\mathcal{Y}\subset\R^d$ be a compact set and set $R_\YSp = \max_{y\in\YSp} \nr{y}$.  Then, for all $\psi_0,\psi_1\in\Class^0(\YSp),$
\begin{equation} \label{eq:dm}
\sca{\psi_1 - \psi_0}{\nabla\psi_{0\#}^*\rho - \nabla\psi_{1\#}^*\rho}
\geq  e^{-1} \frac{m_\rho}{M_\rho}  \frac{1}{R_{\YSp} \diam(Q)} \Var_\rho(\psi_1^* - \psi_0^*).
\end{equation} 
\end{theorem}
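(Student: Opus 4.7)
Introduce convex conjugates $\phi_i := \psi_i^*$ on $\R^d$ (with $\psi_i$ extended by $+\infty$ outside $\YSp$), transport maps $T_i := \nabla\phi_i$ (defined $\rho$-a.e.), pushforwards $\mu_i := T_{i\#}\rho$, and the auxiliary functions $u := \phi_1 - \phi_0$ and $v := \psi_1 - \psi_0$. The proof proceeds in three main steps.

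\emph{Step 1 (Fenchel--Young defect identity).} Combining the Fenchel equalities $\phi_i(x) + \psi_i(T_i(x)) = \sca{x}{T_i(x)}$ with the Young inequalities $\phi_i(x) + \psi_i(T_{1-i}(x)) \geq \sca{x}{T_{1-i}(x)}$, one rewrites
\[
\sca{\psi_1 - \psi_0}{\mu_0 - \mu_1} \ = \ \int_Q \bigl(v(T_0(x)) - v(T_1(x))\bigr)\,d\rho(x) \ = \ \int_Q \bigl(\alpha(x) + \beta(x)\bigr)\,d\rho(x),
\]
where $\alpha(x) := -u(x) - v(T_1(x))$ and $\beta(x) := u(x) + v(T_0(x))$ are both nonnegative. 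This identity yields in particular the pointwise sandwich $-v(T_0(x)) \leq u(x) \leq -v(T_1(x))$ valid $\rho$-a.e.\ on $Q$.

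\emph{Step 2 ($L^\infty$--$L^1$ reduction of the variance).} Since each $\phi_i$ is a supremum of affine functions with slopes in $\YSp$, it is $R_\YSp$-Lipschitz on $\R^d$; hence the oscillation of $u$ on $Q$ is at most $2R_\YSp\diam(Q)$. The elementary inequality $\Var_\rho(u) \leq \nr{u - \bar u}_{L^\infty(Q)}\nr{u - \bar u}_{L^1(\rho)}$ then reduces the problem to proving a bound of the form $\nr{u - c}_{L^1(\rho)} \leq C\int_Q(\alpha + \beta)\,d\rho$ for a suitable constant $c$ and explicit $C$. Using $m_\rho\sigma \leq \rho \leq M_\rho\sigma$ to pass from $\rho$ to $\sigma$ (which costs exactly the ratio $M_\rho/m_\rho$), this reduces further to the analogous $L^1$ bound for $\sigma$ in place of $\rho$.

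\emph{Step 3 (Sharp $L^1$ bound via log-concavity).} This is where the log-concavity of $\sigma$ enters sharply, via Grünbaum's theorem: any half-space bounded by a hyperplane through the barycenter of a log-concave probability measure on a convex body carries mass at least $e^{-1}$. Applied to half-spaces of the form $\{u > c\}$, $\{u \leq c\}$ with $c$ chosen so that both have $\sigma$-mass at least $e^{-1}$, together with the sandwich from Step 1, one constructs a pairing between points of $\{u > c\}$ and $\{u < c\}$ whose total transport cost is controlled by $\int(\alpha + \beta)\,d\sigma$, yielding $\nr{u - c}_{L^1(\sigma)} \leq \tfrac{e}{2}\int_Q(\alpha + \beta)\,d\sigma$. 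Chaining Steps 1--3 produces the announced inequality with constant $e^{-1}(m_\rho/M_\rho)(R_\YSp\diam(Q))^{-1}$. The hard part is Step 3: the sandwich controls $u(x)$ in terms of $v$ evaluated at the two distinct images $T_0(x),T_1(x)\in\YSp$, so extracting a global $L^1$ bound requires either a direct geometric pairing exploiting the $e^{-1}$ mass bound, or a one-dimensional reduction à la Kannan--Lovász--Simonovits localization. The very appearance of $e^{-1}$ in the statement strongly suggests that Grünbaum's constant is the right mechanism.
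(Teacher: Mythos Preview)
Your Steps 1 and 2 are sound and set up the problem correctly: the Fenchel--Young defect identity with the nonnegative decomposition $\alpha+\beta$ is valid, and the $L^\infty$--$L^1$ reduction of the variance is fine. The gap is Step 3. The sets $\{u>c\}$ are \emph{not} half-spaces: $u=\phi_1-\phi_0$ is a difference of two convex functions and its super-level sets are not even convex in general, so Grünbaum's theorem does not apply to them. You seem aware of this, since you retreat to an unspecified ``pairing between points of $\{u>c\}$ and $\{u<c\}$'' or alternatively ``a one-dimensional reduction à la KLS localization,'' but neither of these is carried out, and it is not clear how either would yield the claimed bound $\|u-c\|_{L^1(\sigma)}\leq \tfrac{e}{2}\int(\alpha+\beta)\,d\sigma$. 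The sandwich $-v(T_0)\leq u\leq -v(T_1)$ controls $u(x)$ by values of $v$ at two \emph{different} images depending on $x$, so there is no obvious way to extract a global $L^1$ oscillation bound from it; this is precisely the difficulty you have not resolved.

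For comparison, the paper's mechanism is completely different and does not use Grünbaum at all. It interpolates $\psi_t=\psi_0+t(\psi_1-\psi_0)$, writes the left-hand side of \eqref{eq:dm} as $\int_0^1 \frac{d^2}{dt^2}\Kant_\rho(\psi_t)\,dt$, expresses this second derivative as a Dirichlet-type energy $\int_Q \sca{(D^2\psi_t^*)^{-1}\nabla w_t}{\nabla w_t}\,d\rho$, and then lower-bounds it by a variance via the \emph{Brascamp--Lieb inequality} applied to the log-concave density $Z_t^{-1}e^{-(V+\psi_t^*)}$ on $Q$. This yields a constant $e^{m-M}$ where $M-m$ is the oscillation of $(t,x)\mapsto\psi_t^*(x)$; the factor $e^{-1}$ that caught your eye comes not from Grünbaum but from a subsequent scaling argument: replacing $\psi_i$ by $\lambda\psi_i$ and optimizing $\lambda e^{-\lambda(M-m)}$ over $\lambda>0$ gives $\frac{1}{e(M-m)}$, and finally $M-m\leq R_\YSp\diam(Q)$.
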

An example to keep in mind is when $\sigma$ is the characteristic function of $Q$, normalized to be a probability density. Another important example is when $\rho$ itself is log-concave, in which case we may take $\sigma=\rho$ and $m_\rho/M_\rho=1$.

Before giving the proof of Theorem \ref{t:stability-compact} we recall the following result from \cite{delmer}, which we shall use several times in this work.
\begin{proposition}\cite[Proposition 4.1]{delmer}\label{p:GNdelmer}
Let $K$ be a compact domain of $\R^d$ with rectifiable boundary and let $u,v:K\rightarrow\R$ be two $L$-Lipschitz functions on $K$ that are convex on any segment included in $K$. Then there exists a constant $C_d$ depending only on $d$ such that
\begin{equation}\label{e:GNdelmer}
\|\nabla u-\nabla v\|_{L^2(\lambda,K)}^2\leq C_d\mathcal{H}^{d-1}(\partial K)^{2/3}L^{4/3}\|u-v\|_{L^2(\lambda,K)}^{2/3}
\end{equation}
where $\mathcal{H}^{d-1}$ denotes the $(d-1)$-dimensional Hausdorff measure and the norms in \eqref{e:GNdelmer} are taken with respect to the $d$-dimensional Lebesgue measure $\lambda$.
\end{proposition}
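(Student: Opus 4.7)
The plan is to reduce to a one-dimensional inequality for differences of convex functions by slicing $K$ with parallel lines, and to assemble the slicewise estimates via H\"older and a Cauchy--Crofton-type bound. Setting $w := u-v$ (which is $2L$-Lipschitz) and writing $\int_K|\nabla w|^2 = \sum_{i=1}^d \int_K (\partial_{e_i}w)^2$, it suffices to bound one directional integral. For a.e.\ $y$ in the projection $\pi_{e_i}(K)\subset e_i^\perp$, the slice $S_y := K\cap(y+\R e_i)$ is a countable union of disjoint open intervals $\{I_{j,y}\}_j$, on each of which $u$ and $v$ are convex $L$-Lipschitz functions of one variable by hypothesis.

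The core one-dimensional lemma will be: if $\phi,\psi$ are convex and $L$-Lipschitz on an interval $I\subset\R$, and $g:=\phi-\psi$, then
\[
\int_I (g')^2\,dt \le C\,L^{4/3}\,\|g\|_{L^2(I)}^{2/3}.
\]
To prove this, integration by parts gives $\int_I (g')^2 = [gg']_{\partial I} - \int_I g\,g''$; since $\phi''$ and $\psi''$ are non-negative measures of total mass at most $2L$ on $I$, this yields $\int_I(g')^2 \le 8L\|g\|_{L^\infty(I)}$. A two-regime Lipschitz--$L^2$ interpolation then closes the argument: writing $|g(x)|\le (2r)^{-1/2}\|g\|_{L^2(I)} + 2Lr$ and optimizing in $r$ gives $\|g\|_{L^\infty(I)} \le C L^{1/3}\|g\|_{L^2(I)}^{2/3}$ when $|I|$ is large, while the trivial bound $\int(g')^2 \le 4L^2|I|$ already gives the result when $|I|\le C(\|g\|_{L^2(I)}/L)^{2/3}$.

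Applying this slicewise and using Jensen's concavity inequality $\sum_j a_j^{2/3} \le N(y)^{1/3}\bigl(\sum_j a_j^2\bigr)^{1/3}$ (with $N(y)$ the number of components of $S_y$) gives $\int_{S_y}(\partial_{e_i}w)^2 \le C L^{4/3} N(y)^{1/3}\|w\|_{L^2(S_y)}^{2/3}$. Integrating in $y$ with H\"older (exponents $3,3/2$) and Cauchy--Schwarz $\int\|w\|_{L^2(S_y)}\,dy\le|\pi_{e_i}(K)|^{1/2}\|w\|_{L^2(K)}$, one obtains
\[
\int_K(\partial_{e_i}w)^2 \le C L^{4/3}\Bigl(\int N(y)\,dy\Bigr)^{1/3}|\pi_{e_i}(K)|^{1/3}\|w\|_{L^2(K)}^{2/3}.
\]
The coarea formula applied to $\pi_{e_i}|_{\partial K}$ gives $\int N(y)\,dy = \tfrac12\int_{\partial K}|\langle\nu,e_i\rangle|\,d\mathcal{H}^{d-1} \le \tfrac12\mathcal{H}^{d-1}(\partial K)$ (each component of $S_y$ has two endpoints on $\partial K$), and $|\pi_{e_i}(K)| \le \int N(y)\,dy$, so the bracketed factor is bounded by $C\,\mathcal{H}^{d-1}(\partial K)^{2/3}$. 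Summing over the $d$ coordinate directions concludes.

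The main difficulty will be the 1D inequality: the two-regime analysis is what makes the constant independent of the interval length $|I|$, and this step is exactly where the hypothesis that $u,v$ are one-dimensionally convex along segments in $K$ enters, via the bound $|g''|(I)\le 4L$ coming from $\phi'',\psi''\ge 0$ each of mass at most $2L$. The higher-dimensional assembly then uses only the rectifiability of $\partial K$, through the coarea formula; in particular, $K$ itself need not be convex, since all the convexity is exploited slice by slice.
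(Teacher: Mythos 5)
The approach — slice $K$ into line segments, prove a one-dimensional Gagliardo--Nirenberg-type inequality for differences of convex functions, and assemble via Fubini, H\"older and a Cauchy--Crofton bound on $\int N(y)\,dy$ — is indeed the right one and is in the spirit of the argument in \cite[Proposition 4.1]{delmer}, which the present paper cites without reproducing. Your one-dimensional lemma and its proof (integration by parts to get $\int_I(g')^2\leq 8L\|g\|_{L^\infty(I)}$ from the total-mass bound $|g''|(I)\leq 4L$, followed by the two-regime Lipschitz--$L^2$ interpolation, the threshold $|I|\asymp(\|g\|_{L^2}/L)^{2/3}$ being exactly where the two regimes meet) are correct, and the coarea bound at the end is also fine modulo replacing the ``$=$'' by ``$\leq$'' in $\int N(y)\,dy\leq\tfrac12\int_{\partial K}|\langle\nu,e_i\rangle|\,d\mathcal{H}^{d-1}$.

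There is, however, one genuine error in the assembly step. You invoke a ``Jensen'' inequality $\sum_j a_j^{2/3}\leq N^{1/3}\bigl(\sum_j a_j^2\bigr)^{1/3}$, but this is false: taking $a_1=a_2=1$ gives $2\leq 2^{1/3}\cdot 2^{1/3}=2^{2/3}$, which is wrong. The correct estimate (by H\"older with exponents $3$ and $3/2$, or by concavity of $t\mapsto t^{1/3}$) is $\sum_j a_j^{2/3}\leq N^{2/3}\bigl(\sum_j a_j^2\bigr)^{1/3}$. Consequently your slicewise bound should read $\int_{S_y}(\partial_{e_i}w)^2\leq CL^{4/3}N(y)^{2/3}\|w\|_{L^2(S_y)}^{2/3}$, and the subsequent integration in $y$ is then simpler than what you wrote: apply H\"older once with exponents $3/2$ and $3$ to get
\begin{equation*}
\int_{\pi_{e_i}(K)}N(y)^{2/3}\|w\|_{L^2(S_y)}^{2/3}\,dy\leq\Bigl(\int N(y)\,dy\Bigr)^{2/3}\Bigl(\int\|w\|_{L^2(S_y)}^2\,dy\Bigr)^{1/3}=\Bigl(\int N(y)\,dy\Bigr)^{2/3}\|w\|_{L^2(K)}^{2/3},
\end{equation*}
no Cauchy--Schwarz or $|\pi_{e_i}(K)|^{1/3}$ factor needed. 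The coarea estimate then gives $(\int N\,dy)^{2/3}\leq C\,\mathcal{H}^{d-1}(\partial K)^{2/3}$, and summing over $i$ finishes the proof. Your version happened to land on the right final bound because the Cauchy--Schwarz detour and the inequality $|\pi_{e_i}(K)|\leq\int N\,dy$ compensate for the missing $N^{1/3}$, but as written the chain of inequalities begins from a false statement and so does not constitute a proof; the fix above restores it.
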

We turn to the proof of Theorem \ref{s:kantofunct}.
\begin{proof}[Proof of Theorem \ref{s:kantofunct}]
As in  \cite{delmer}, the proof mainly consists in proving a strong convexity property for the Kantorovich functional $\Kant_{\rho}:\mathcal{C}^0(\mathcal{Y})\rightarrow \R$,
$$\Kant_{\rho}(\psi) = \int_Q \psi^* d\rho.$$

\noindent{\emph{Step 1: Restriction to $Q$-convex functions.}}
In the following, we fix a closed ball $B$ containing $Q$ in its interior. 
A function $\psi: \Rsp^d \to \Rsp$ is called \emph{$Q$-convex} if it is equal to the convex conjugate of a convex and Lipschitz function $\phi$ defined on $B$, i.e. $\psi(y) = \max_{x\in B}\sca{x}{y} - \phi(x)$. In this case, Fenchel-Rockafellar's theorem implies that $\phi = \psi^*$ on $B$.  Note that if $\psi\in\Class^0(\YSp)$, then 
\begin{equation} \label{eq:app:extension}
    \hat{\psi}(y) = \left(\restr{\psi^*}_{B}\right)^*(y) = \max_{x\in B} \sca{x}{y} - \psi^*(x)
\end{equation} 
is $Q$-convex, since $\restr{\psi^*}_{B}$ is $\mathcal{R}_\YSp$-Lipschitz.

Our first step is to prove that the inequality \eqref{eq:dm} for any pair of $Q$-convex functions implies the same inequality for any pair of continuous functions on $\YSp$. To do so, we consider $\psi_0,\psi_1 \in \Class^0(\YSp)$ and we define $\hat{\psi}_0$ and $\hat{\psi}_1$ as in \eqref{eq:app:extension}. Since the convex functions $\hat{\psi}_i^*$ and $\psi_i^*$ agree on $Q$,  the right-hand side of the inequality \eqref{eq:dm} is unchanged under the transformation $\psi_i \to \hat{\psi}_i$:
\begin{equation} \label{eq:app:rhs-equal}
\begin{aligned}
    &\Var_{\rho}(\psi_1^* -\psi_0^*) = \Var_{\rho}(\hat{\psi}_1^* - \hat{\psi}_0^*).
\end{aligned}
\end{equation} 
We now deal with the duality pairing in the left-hand side of \eqref{eq:dm}. For any $x\in Q$ where $\psi_i^*$ is differentiable (that is, $\rho$-a.e. $x$), the maximum in the definition of the  convex conjugate is attained at $y = \nabla \psi_i^*(x)$. For such $x\in Q$, we have $\hat{\psi}_i(\nabla \psi_i^*(x)) = \psi_i(\nabla \psi_i^*(x))$. This implies that
$$ \sca{\psi_i}{(\nabla\psi_i^*)_{\#}\rho} 
= \int_{Q} \psi_i(\nabla\psi_i^*(x)) d \rho(x) 
=   \sca{\hat{\psi}_i}{(\nabla \hat{\psi}_i^*)_{\#}\rho}.$$
Moreover, using $\hat{\psi}_{1-i} \leq \psi_{1-i}$ and the fact that $\psi_i^*=\hat{\psi}_i^*$ in $Q$, we have 
$$ \sca{\psi_{1-i}}{(\nabla\psi_i^*)_{\#}\rho} \geq \sca{\hat{\psi}_{1-i}}{(\nabla\psi_i^*)_{\#}\rho} = \sca{\hat{\psi}_{1-i}}{(\nabla \hat{\psi}_i^*)_{\#}\rho}.$$
Summing the two previous inequalities with $i\in\{0,1\}$ we obtain
\begin{equation}\label{eq:app:lhs-equal}
\sca{\psi_1 - \psi_0}{\nabla \psi_{0\#}^*\rho - \nabla \psi_{1\#}^*\rho} 
\geq 
\sca{\hat{\psi}_1 - \hat{\psi}_0}{\nabla \hat{\psi}_{0\#}^*\rho - \nabla\hat{\psi}_{1\#}^*\rho}.
\end{equation}
Putting \eqref{eq:app:rhs-equal} and \eqref{eq:app:lhs-equal} together, we obtain that if  \eqref{eq:dm}  holds for the $Q$-convex functions $(\hat{\psi}_0,\hat{\psi}_1)$, then it also holds for the merely continuous functions $(\psi_0,\psi_1)$. 
\bigskip

\noindent{\emph{Step 2: Restriction to smooth and strongly convex functions.}}
We now consider two $Q$-convex functions $\psi_0,\psi_1: \Rsp^d\to\Rsp$. These functions are convex, Lipschitz, and their convex conjugates $\psi_0^*,\psi_1^*$ are Lipschitz on the ball $B$. Let $\chi:\R^d\rightarrow\R$ be a smooth and non-negative function supported in $B(0,1)$, with $\int_{\R^d}\chi\; d\lambda=1$. For $\varepsilon>0$ let $\chi_\varepsilon=\varepsilon^{-d}\chi(\cdot/\varepsilon)$. For $i=1,2$, setting
$$
\psi_{i,\varepsilon}=\chi_\varepsilon*\psi_i + \varepsilon |\cdot|^2
$$
we obtain a family $(\psi_{i,\eps})_{\eps\to 0}$ of $\Class^2$-smooth and strongly convex functions on $\Rsp^d$ such that $(\psi_{i,\eps})$ converges pointwise to $\psi_{i}$ on $\Rsp^d$ and $(\psi_{i,\eps}^*)$ converges pointwise to $\psi_{i}^*$ on $\mathrm{int}(B)$ as $\eps \to 0$. By \cite[Theorem 3.1.4]{fundamentals}, we get
\begin{enumerate}
\item $(\psi_{i,\eps})$ converges uniformly to $\psi_i$  on all compact subset of $\Rsp^d$ ;
\item $(\psi_{i,\eps}^*)$ converges uniformly to $\psi_{i}^*$ on all compact subsets of $\mathrm{int}(B)$ (in particular $Q$) ;
\end{enumerate}
The functions $\psi_{i,\eps}^*$ are equi-Lipschitz in every compact subset of $\mathrm{int}(B)$: this follows from the monotonicity of the slope of convex functions and the uniform boundedness of $(\psi_{i,\varepsilon}^*)$ in compact subsets of $\mathrm{int}(B)$ (see \cite[Lemma 3.1.1]{fundamentals} for a very similar argument).  Combining this fact with Proposition~\ref{p:GNdelmer} and with the inequality $W_1(\nabla \psi_{i,\eps\#}^*\rho, \nabla \psi_{i\#}^*\rho) \leq \nr{\nabla \psi_{i,\eps}^* - \nabla \psi_i^*}_{L^1(\rho)}$, we get
\begin{enumerate}
\item[(3)] $\lim_{\eps\to 0} W_1(\nabla \psi_{i,\eps\#}^*\rho, \nabla \psi_{i\#}^*\rho) = 0.$
\end{enumerate} 
Thanks to (1)-(3), all the terms of the inequality \eqref{eq:dm} for $\eps>0$ converge to their non-regularized counterparts as $\eps\to 0$. In other words, \eqref{eq:dm} for $Q$-convex functions is a consequence of the same inequality for $\Class^2$ and strongly convex functions on $\Rsp^d$. 
\bigskip

\noindent\emph{Step 3: Proof of the inequality in the smooth and strongly convex case.}
From now on, we therefore assume that $\psi_0,\psi_1$ are strongly convex and $\Class^2$. This allows us to apply \cite[Proposition 2.2]{delmer} to $\phi_i = \psi_i^*$, which gives formulas for the first and second derivative of  $\Kant_{\rho}$ along the curve $\psi_{t} = \psi_{0} + t v$ where $v = \psi_{1}-\psi_{0}$. We obtain: 
\begin{equation} \label{eq:formula-diff-K}
\begin{aligned}
\sca{\psi_1 - \psi_0}{\nabla\psi_{0\#}^*\rho - \nabla\psi_{1\#}^*\rho} &=
    \restr{\frac{d}{d t} \Kant_{\rho}(\psi_{t})}_{t=1} - 
    \restr{\frac{d}{d t} \Kant_{\rho}(\psi_{t})}_{t=0} \\
    &= \int_0^1 \frac{d^2}{dt^2} \Kant_\rho(\psi_{t}) d t   \\
    &= \int_0^1 \int_Q \sca{\nabla v(\nabla \psi_{t}^*)}{D^2 \psi_{t}^* \cdot \nabla v(\nabla \psi_{t}^*)} d \rho d t,
\end{aligned}
\end{equation}
Introducing $w_t(x) = v(\nabla \psi_t^*)$, whose gradient is given by
$\nabla w_t = D^2\psi_t^* \cdot \nabla v(\nabla \psi_t^*),$ we get
\begin{equation}\label{e:anepasoublier2}
    \int_Q \sca{\nabla v(\nabla \psi_t^*)}{D^2 \psi_t^* \cdot \nabla v(\nabla \psi_t^*)} d\rho
    = \int_Q \sca{(\D^2 \psi_t^*)^{-1} \nabla w_t}{\nabla w_t} d \rho. 
\end{equation}
Note here that $\psi_t$ is $C^2$, thus $\psi_t^*$ is strongly convex and its Hessian is invertible.
We now use the assumptions on $\sigma$ and $\rho$. We write $\sigma = e^{-V}$ where $V$ is a convex potential on $Q$. We first note that
\begin{equation}\label{e:anepasoublier}
\int_Q \sca{(\D^2 \psi_t^*)^{-1} \nabla w_t}{\nabla w_t} d \rho \geq  \int_Q \sca{(\D^2 \psi_t^* + \D^2 V)^{-1} \nabla w_t}{\nabla w_t} d \rho,
\end{equation} 
To apply the Brascamp-Lieb inequality, we  compare the probability density $\rho_t =Z_t^{-1} e^{-(V+\psi_t^*)}$ over $Q$, where $Z_t$ is a normalizing constant, to the probability density $\rho$. We denote by $m_t$
and $M_t$ the minimum and the maximum of $\psi_t^*(x)$ over $x\in Q$, and let $r=\sup_{t\in[0,1]} M_t-m_t$. We have
\begin{equation*}
\begin{aligned}
 &\rho(x) \geq m_\rho e^{-V} e^{m_t - \psi_t^*(x)} = m_\rho Z_t e^{m_t} \rho_t(x), \\
 &\rho_t(x) = \frac{1}{Z_t} e^{-M_t} e^{M_t -\psi_t^*-V} \geq \frac{1}{M_\rho Z_t}e^{-M_t} \rho(x).
 \end{aligned}
\end{equation*}
The Brascamp-Lieb inequality \cite{brascamplieb} applies to log-concave probability measures supported on the compact and convex set $Q$ (as a special case of \cite[Corollary 1.3]{lepeutrec}, see also \cite[Theorem 2.3]{delmer}). Thanks to the above comparison between $\rho$ and $\rho_t$ and this version of the Brascamp-Lieb inequality (in the second line) we obtain 
\begin{align} 
\int_Q \sca{(\D^2 \psi_t^* + \D^2 V)^{-1} \nabla w_t}{\nabla w_t} d\rho &\geq
m_\rho Z_t e^{m_t} \int_Q \sca{(\D^2 \psi_t^* + \D^2 V)^{-1} \nabla w_t}{\nabla w_t} d\rho_t \notag \\
&\geq m_\rho Z_t e^{m_t} \Var_{\rho_t}(w_t) \notag \\
&\geq \frac{ m_\rho}{M_\rho} e^{-r} \Var_{\rho}(w_t). \label{eq:d2-lower-bound-var} 
\end{align}
Using the convexity of the variance, and using that $\frac{d}{dt} \psi_t^*(x) = -v(\nabla \psi_t^*(x)) = - w_t(x)$ (a consequence of Fenchel-Young's equality, see proof of Proposition 2.2 in \cite{delmer}),
\begin{equation}\label{eq:var-dual-to-primal}
\begin{aligned}
    \Var_\rho(\psi_1^* - \psi_0^*)  
    = \Var_\rho\left(\int_0^1 \frac{d}{dt} \psi_t^* d t\right)
    &\leq \int_0^1\Var_{\rho}\left(\frac{d}{dt} \psi_t^* \right)d t
    &= \int_0^1 \Var_{\rho}(w_t) d t.
\end{aligned}
\end{equation}
Combining Equations~\eqref{eq:formula-diff-K}, \eqref{e:anepasoublier2}, \eqref{e:anepasoublier}, \eqref{eq:d2-lower-bound-var} and \eqref{eq:var-dual-to-primal}, we obtain 
\begin{equation}\label{eq:var:exp-constant}
    \sca{\psi_1 - \psi_0}{\nabla \psi_{0\#}^*\rho - \nabla \psi_{1\#}^* \rho} \geq \frac{m_\rho}{M_\rho} e^{-r} \Var_\rho(\psi_1^* - \psi_0^*)
\end{equation} 
\bigskip

\noindent\emph{Step 4. Computing an explicit constant.}
We first improve the constant in \eqref{eq:var:exp-constant} by a scaling argument. Given $\lambda>0$, we recall that $(\lambda\psi)^* = \lambda \psi^*(\cdot/\lambda)$. Applying the previous inequality to the functions $\psi_i^\lambda = \lambda\psi_i$ and to the dilated probability density $\rho_\lambda = (x\mapsto \lambda^{-1} x)_{\#}\rho$, and remarking that $(M^\lambda_t, m^\lambda_t) = (\lambda M_t, \lambda m_t)$, we get 
$$ \lambda\sca{\psi_1 - \psi_0}{\nabla \psi_{0\#}^*\rho - \nabla \psi_{1\#}^* \rho} \geq \frac{m_\rho}{M_\rho}  e^{-\lambda r} \lambda^2 \Var_\rho(\psi_1^* - \psi_0^*). $$
Choosing $\lambda = r^{-1}$, we see that we can replace the constant $e^{-r}$ in \eqref{eq:var:exp-constant} by $1/er$. 
Finally, we need to control the oscillation $r$. For this, we fix $x\in Q$ and consider $y \in \YSp$ such that $\psi_t^*(x)= \sca{x}{y} - \psi_t(y)$. Then, for any other point $x'\in Q$, we have
$$\psi_t^*(x') \geq \sca{x'}{y} - \psi_t(y) = \sca{x'-x}{y} + \psi_t^*(x) \geq -\diam(Q)R_\YSp + \psi_t^*(x).$$
This implies that $r\leq\diam(Q)R_\YSp$, which concludes the proof of \eqref{eq:dm}.
\end{proof}

\subsection{Proof of Theorem \ref{t:logconcave}}\label{s:vrailogconc}
Let us assume that $\rho = e^{-U-F}$ where $D^2U\geq \kappa {\rm Id}$ with $\kappa>0$, and $F\in L^\infty(\R^d)$. Without loss of generality we assume that $U$ attains its minimum at the origin $0\in\R^d$. For $r\geq 0$, we consider $\Ball_r=B(0,r) \subset \R^d$, the closed Euclidean ball of center $0$ and radius $r$. We denote by $\phi_{\mu,r}$ (resp. $\phi_{\nu,r}$) the restriction of $\phi_\mu$ (resp. $\phi_\nu$) to $\Ball_r$, extended by $+\infty$ outside $\Ball_r$, and we consider the probability measures 
$$
\rho_r=\frac{\rho_{|\Ball_r}}{\rho(\Ball_r)}, \qquad \mu_r=(\nabla \phi_{\mu,r})_{\#}\rho_r, \qquad \nu_r=(\nabla \phi_{\nu,r})_{\#}\rho_r.
$$
Finally we set $\psi_{\mu,r}=\phi_{\mu,r}^*$ and $\psi_{\nu,r}=\phi_{\nu,r}^*$. We observe that $\psi_{\mu,r}^*=\phi_{\mu,r}$ and $\psi_{\nu,r}^*=\phi_{\nu,r}$ by the Fenchel-Moreau theorem.

We apply Theorem \ref{t:stability-compact} to $\rho_r$, with $\sigma$ the (unique) probability density on $\Ball_r$ whose density is proportional to $e^{-U}$. This gives
\begin{equation}\label{e:varineqrhosgau}
\Var_{\rho_r}(\phi_{\mu,r}-\phi_{\nu,r})\leq C_{\rho,\mathcal{Y}}r\sca{\psi_{\mu,r}-\psi_{\nu,r}}{\nu_r-\mu_r}
\end{equation}
where $C_{\rho,\mathcal{Y}}$ does not depend on $r$. 
We notice that 
$\psi_{\mu,r}$ and $\psi_{\nu,r}$ are $r$-Lipschitz. By Kantorovich-Rubinstein duality, we deduce from \eqref{e:varineqrhosgau} the upper bound
\begin{equation}\label{e:varineqsgau}
\Var_{\rho_r}(\phi_{\mu,r}-\phi_{\nu,r})\leq C_{\rho,\mathcal{Y}}r^2W_1(\mu_r,\nu_r).
\end{equation}

\begin{claim} There exists $r_0\geq 1$ (depending only on $\rho$, not on $\mu,\nu$) such that 
\begin{equation}\label{e:infpluspetitque1gau}
\inf_{x\in \Res_{r_0}}|(\phi_\mu-\phi_\nu)(x)|\leq 1.
\end{equation}
\end{claim}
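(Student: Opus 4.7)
The plan is to argue by contradiction, combining three facts: $\phi_\mu-\phi_\nu$ is globally Lipschitz on $\R^d$, it has vanishing $\rho$-mean, and $\rho$ has fast-decaying tails at infinity.

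First, since $\mu,\nu$ are supported in the compact set $\mathcal{Y}$, the subgradient of the convex function $\phi_\mu$ is $\rho$-a.e.\ contained in $\mathcal{Y}$. Because $\rho$ has full support in $\R^d$, this forces $\phi_\mu$ (and likewise $\phi_\nu$) to be $R_\mathcal{Y}$-Lipschitz on all of $\R^d$, with $R_\mathcal{Y}=\max_{y\in\mathcal{Y}}|y|$. Hence $g:=\phi_\mu-\phi_\nu$ is $2R_\mathcal{Y}$-Lipschitz, and the normalization of the Brenier potentials gives $\int_{\R^d} g\,d\rho=0$.

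The core of the argument is as follows. Assume, toward a contradiction, that $|g|>1$ everywhere on $\Res_{r_0}$. Since $\Res_{r_0}$ is connected and $g$ is continuous, $g$ has constant sign there, and after exchanging the roles of $\mu$ and $\nu$ we may assume $g>1$ on $\Res_{r_0}$. By continuity, $g\geq 1$ on $\overline{\Res_{r_0}}$. For any $x\in\Res_{r_0}^c$, the point $x_0=r_0 x/|x|$ lies on $\partial\Res_{r_0}$, so the Lipschitz bound yields
$$
g(x)\geq g(x_0)-2R_\mathcal{Y}(|x|-r_0)\geq 1-2R_\mathcal{Y}(|x|-r_0).
$$
Integrating against $\rho$ and using $\int g\,d\rho=0$ gives
$$
0\geq \rho(\Res_{r_0})+\int_{\Res_{r_0}^c}\bigl(1-2R_\mathcal{Y}(|x|-r_0)\bigr)\,d\rho(x)=1-2R_\mathcal{Y}\int_{\Res_{r_0}^c}(|x|-r_0)\,d\rho(x).
$$
Since $\rho=e^{-U-F}$ with $D^2U\geq \kappa\,\Id$ and $F\in L^\infty$, the measure $\rho$ has exponential moments, so $\int_{\Res_{r_0}^c}(|x|-r_0)\,d\rho\to 0$ as $r_0\to+\infty$. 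Choosing $r_0\geq 1$ large enough in terms of $\rho$ and $R_\mathcal{Y}$ contradicts the previous display, proving the claim.

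There is no substantial obstacle here: this is essentially the standard observation that a Lipschitz, mean-zero function on a probability space with decaying tails must come close to $0$ on every sufficiently large ball. The only ingredient specific to the present setting is the tail decay of $\rho$, which is immediate from the log-concave structure.
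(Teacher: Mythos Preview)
Your proof is correct and follows essentially the same contradiction argument as the paper: assume $|\phi_\mu-\phi_\nu|>1$ on $\Res_{r_0}$, use the $2R_\mathcal{Y}$-Lipschitz bound to control the function outside $\Res_{r_0}$, and integrate against $\rho$ to contradict the zero-mean condition once $r_0$ is large enough that the tail integral $\int_{\Res_{r_0}^c}(|x|-r_0)\,d\rho$ is small. The only cosmetic difference is that the paper writes the lower bound as $f(x)\geq -2R_\mathcal{Y}\,\dist(x,\Res_{r_0})$ rather than your slightly sharper $g(x)\geq 1-2R_\mathcal{Y}(|x|-r_0)$; and note that the tail decay you invoke needs nothing more than $\rho\in\mathcal{P}_2(\R^d)$ and dominated convergence, so the log-concave structure is not actually used at this step.
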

\begin{proof}[Proof of the claim]
Set $f=\phi_\mu-\phi_\nu$. Let $r_0$ be large enough so that
$$\rho(\Res_{r_0})>2R_{\mathcal{Y}}\int_{\R^d\setminus \Res_{r_0}}{\rm dist}(x,\Res_{r_0})d\rho(x).
$$ Notice that such $r_0$ exists since the left-hand side tends to $1$ as $r_0\rightarrow +\infty$ and the right-hand side tends to $0$ by dominated convergence (indeed, all moments of $\rho$ are finite, since $\rho$ is a log-bounded perturbation of a log-concave density). Assume for the sake of a contradiction that \eqref{e:infpluspetitque1gau} does not hold, and without loss of generality that $f>1$ on $\Res_{r_0}$. Together with the fact that 
\begin{equation}\label{e:bddgradphigau} 
\phi_\mu \text{ and } \phi_\nu \text{ are } R_{\mathcal{Y}}\text{-Lipschitz},
\end{equation}  
this implies $f(x)\geq -2R_{\mathcal{Y}}{\rm dist}(x,\Res_{r_0})$ for $x\notin \Res_{r_0}$, therefore 
$$
\int_{\R^d}fd\rho=\int_{\Res_{r_0}}fd\rho+\int_{\R^d\setminus \Res_{r_0}}fd\rho> \rho(\Res_{r_0})-2R_{\mathcal{Y}}\int_{\R^d\setminus \Res_{r_0}}{\rm dist}(x,\Res_{r_0})d\rho(x)>0.
$$ 
But recall that 
\begin{equation}\label{e:nullaveragegau}
\int_{\R^d} \phi_\mu d\rho = \int_{\R^d} \phi_\nu d\rho =0 
\end{equation}  
therefore $\int_{\R^d}fd\rho=0$ and we get a contradiction. 
\end{proof}
For $r\geq 0$ and $\ell=0,1,2$ we set
\begin{equation}\label{e:truncmoments}
m_\ell(r)=\int_{\R^d\setminus \Res_r}|x|^\ell d\rho(x)
\end{equation}
the $\ell$-th moment of the tail of $\rho$ outside $\Res_r$.
Using \eqref{e:bddgradphigau} and the claim, we deduce that there exists $C_\rho>0$ such that for any $x\in\R^d$, $|(\phi_\mu-\phi_\nu)(x)|\leq C_\rho+R_{\mathcal{Y}}|x|$.  This implies that for large values of $r$, 
\begin{equation}\label{e:complementballgau}
\|\phi_\mu-\phi_\nu\|^2_{L^2(\rho,\R^d\setminus \Res_r)}\leq \int_{\R^d \setminus \Res_r} (C_\rho+R_{\mathcal{Y}}|x|)^2 d\rho(x)\leq C_{\rho,\mathcal{Y}}m_2(r).
\end{equation}
Now we estimate $\eta_r=\int_{\R^d} (\phi_{\mu,r}-\phi_{\nu,r})d\rho_r$: using \eqref{e:nullaveragegau}, we get
$$
\rho(\Res_r)\left|\eta_r\right|=\Bigl|\int_{\R^d \setminus \Res_r} (\phi_\mu-\phi_\nu)d\rho\Bigr|\leq \int_{\R^d \setminus \Res_r} (C_\rho+R_{\mathcal{Y}}|x|)d\rho(x)\leq C_{\rho,\mathcal{Y}}m_1(r)
$$
again for large $r$.
For $r$ large enough, $\rho(\Res_r)\geq 1/2$, we deduce
\begin{align}
\|\phi_{\mu,r}-\phi_{\nu,r}\|_{L^2(\rho)}^2\leq \|\phi_{\mu,r}-\phi_{\nu,r}\|_{L^2(\rho_r)}^2&={\rm Var}_{\rho_r}(\phi_{\mu,r}-\phi_{\nu,r})+\eta_r^2\nonumber\\
&\leq C_{\rho,\mathcal{Y}}(r^2W_1(\mu_r,\nu_r)+m_1(r)^2)\label{e:estballgau}
\end{align}
where in the last inequality we used \eqref{e:varineqsgau}.
Together with \eqref{e:complementballgau} we get
\begin{align}
{\rm Var}_\rho(\phi_\mu-\phi_\nu)=\|\phi_\mu-\phi_\nu\|^2_{L^2(\rho)}&=\|\phi_{\mu,r}-\phi_{\nu,r}\|_{L^2(\rho,\Res_r)}^2+\|\phi_\mu-\phi_\nu\|^2_{L^2(\rho,\R^d\setminus \Res_r)}\nonumber\\
&\leq C_{\rho,\mathcal{Y}}(r^2W_1(\mu_r,\nu_r)+m_1(r)^2+m_2(r))\label{e:varwasssgau}
\end{align}

\begin{claim} There holds 
\begin{equation}\label{e:W1W1gau}
W_1(\mu_r,\nu_r)\leq W_1(\mu,\nu)+C_{\rho,\mathcal{Y}}m_0(r).
\end{equation}
\end{claim}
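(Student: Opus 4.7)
The plan is to prove the claim through the triangle inequality
\begin{equation*}
W_1(\mu_r, \nu_r) \leq W_1(\mu_r, \mu) + W_1(\mu, \nu) + W_1(\nu, \nu_r),
\end{equation*}
after bounding $W_1(\mu, \mu_r)$ and $W_1(\nu, \nu_r)$ by a constant multiple of $m_0(r)$. The key observation is that the supports of $\mu, \mu_r$ (and likewise $\nu, \nu_r$) are contained in the compact set $\mathcal{Y}$, which turns a total-variation estimate into a $W_1$ estimate at the price of a factor $\diam(\mathcal{Y})$.

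To bound $W_1(\mu, \mu_r)$, I would first decompose $\mu$ by splitting $\rho$ according to $\Res_r$ and its complement. Since $\nabla \phi_{\mu,r} = \nabla \phi_\mu$ on $\Res_r$, linearity of the pushforward gives
\begin{equation*}
\mu = (\nabla\phi_\mu)_\#\rho = (\nabla\phi_\mu)_\#(\rho|_{\Res_r}) + (\nabla\phi_\mu)_\#(\rho|_{\R^d \setminus \Res_r}) = \rho(\Res_r)\,\mu_r + \mu_r^c,
\end{equation*}
where $\mu_r^c := (\nabla\phi_\mu)_\#(\rho|_{\R^d \setminus \Res_r})$ is a non-negative measure of total mass $m_0(r) = 1 - \rho(\Res_r)$, supported in $\spt(\mu) \subset \mathcal{Y}$. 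As signed measures,
\begin{equation*}
\mu - \mu_r = \mu_r^c - m_0(r)\,\mu_r, \qquad |\mu - \mu_r|(\R^d) \leq 2\,m_0(r).
\end{equation*}

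I would then conclude via Kantorovich--Rubinstein duality: for any $1$-Lipschitz $f$ on $\R^d$, subtracting a constant (which does not alter $\int f\,d(\mu-\mu_r)$, as $\mu - \mu_r$ has total mass zero) lets us assume $f(x_0)=0$ for some fixed $x_0\in\mathcal{Y}$, whence $|f| \leq \diam(\mathcal{Y})$ on $\mathcal{Y}$. Since $\mu - \mu_r$ is concentrated on $\mathcal{Y}$, this yields $W_1(\mu, \mu_r) \leq \diam(\mathcal{Y})\,|\mu - \mu_r|(\R^d) \leq 2\,\diam(\mathcal{Y})\,m_0(r)$; the same argument applies to $W_1(\nu, \nu_r)$, and the triangle inequality above produces \eqref{e:W1W1gau} with $C_{\rho,\mathcal{Y}} = 4\,\diam(\mathcal{Y})$. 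The argument is essentially mass-transfer bookkeeping and I do not foresee a genuine obstacle; everything hinges on the compactness of $\mathcal{Y}$, which is built into the assumptions of Theorem \ref{t:logconcave}.
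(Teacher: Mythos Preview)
Your proof is correct and follows essentially the same route as the paper: triangle inequality, the decomposition $\mu=\rho(\Res_r)\mu_r+\mu_r'$, and Kantorovich--Rubinstein duality with the $1$-Lipschitz test function normalized to vanish at a point of $\mathcal{Y}$. The only cosmetic difference is that the paper normalizes at the origin and works with $R_{\mathcal{Y}}$ rather than $\diam(\mathcal{Y})$, yielding the constant $4R_{\mathcal{Y}}$ instead of your $4\,\diam(\mathcal{Y})$.
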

\begin{proof}[Proof of the claim]
There holds 
$$
W_1(\mu_r,\nu_r)-W_1(\mu,\nu)\leq W_1(\mu_r,\mu)+W_1(\nu_r,\nu).
$$
Let us prove an upper bound on $W_1(\mu_r,\mu)$ for instance. We write $\mu=\rho(\Res_r)\mu_r+\mu_r'$. Thanks to the Kantorovich-Rubinstein duality formula we know there exists a $1$-Lipschitz function $g$ such that $W_1(\mu_r,\mu)=\int_{B(0,R_{\mathcal{Y}})} g\  d(\mu_r-\mu)$. We may assume that $g(0)=0$. Hence $|g(x)|\leq R_{\mathcal{Y}}$ for any $x\in \mathcal{Y}$. Since $\mu_r'(\mathcal{Y})=1-\rho(\Res_r)$ we deduce
$$
W_1(\mu_r,\mu)= (1-\rho(\Res_r))\int_{\mathcal{Y}} g d\mu_r-\int_{\mathcal{Y}}gd\mu_r'\leq 2R_{\mathcal{Y}}(1-\rho(\Res_r))=2R_{\mathcal{Y}}m_0(r).
$$
The claim follows.
\end{proof}

Plugging the claim into \eqref{e:varwasssgau} we get
\begin{equation}\label{e:phimuphinugau}
\|\phi_\mu-\phi_\nu\|_{L^2(\rho)}^2\leq C_{\rho,\mathcal{Y}}\left(r^2W_1(\mu,\nu)+r^2m_0(r)+m_1(r)^2+m_2(r)\right).
\end{equation}
Then due to the assumption that $\rho=e^{-U-F}$ where $U$ is minimal at $0$, $D^2U\geq \kappa{\rm Id}$ with $\kappa>0$ and $F\in L^\infty(\R^d)$, 
\begin{equation}\label{e:upperboundmoment2gau}
m_\ell(r)\leq C_{\rho,\ell} r^{d+\ell-2}e^{-\frac12 \kappa r^2}
\end{equation}
for some constant $C_{\rho,\ell}$ independent of $r$. Now, observe that $\|\phi_\mu-\phi_\nu\|_{L^2(\rho)}$ is also upper bounded by a constant which depends only on $\rho$, as shown by the same argument as in \eqref{e:complementballgau} (with integration over $\R^d$ instead of $\R^d\setminus\mathcal{B}_r$). Hence, to establish \eqref{e:stabpotlogconc}, we may fix $\varepsilon>0$ and assume in the sequel that $W_1(\mu,\nu)\leq \varepsilon$.
We plug \eqref{e:upperboundmoment2gau} into \eqref{e:phimuphinugau} and we let $r=(4\kappa^{-1}|\log W_1(\mu,\nu)|)^{1/2}$. Note that choosing $\varepsilon>0$ small enough guarantees that $\rho(\mathcal{B}_r)\geq 1/2$. This choice of $r$ yields \eqref{e:stabpotlogconc}.

As in \cite{delmer}, the stability of Brenier maps \eqref{e:stabmaplogcon} now follows from Proposition \ref{p:GNdelmer}. We apply it with $K=\Res_r$ (where $r\geq 0$ is arbitrary), to the functions $\phi_\mu,\phi_\nu$ which are $R_{\mathcal{Y}}$-Lipschitz. We get
\begin{equation}\label{e:balchgau}
\|T_\mu-T_\nu\|^2_{L^2(\lambda,\Ball_r)}\leq C_\rho  r^{2(d-1)/3}R_{\mathcal{Y}}^{4/3}\|\phi_\mu-\phi_\nu\|^{2/3}_{L^2(\lambda,\Ball_r)}.
\end{equation} 
This is written for the Lebesgue measure $\lambda$, but since $me^{-\frac12 \kappa' |x|^2}\leq \rho(x)\leq M$ we deduce immediately that
$$
\|T_\mu-T_\nu\|^2_{L^2(\rho,\Ball_r)}\leq C_{\rho,\mathcal{Y}}  r^{2(d-1)/3}e^{\frac16 \kappa' r^2}\|\phi_\mu-\phi_\nu\|^{2/3}_{L^2(\rho)}.
$$
Due to \eqref{e:bddgradphigau} and \eqref{e:upperboundmoment2gau} (for $\ell=0$) we get
\begin{equation}\label{e:onyestpresque}
\|T_\mu-T_\nu\|^2_{L^2(\rho)}\leq  C_{\rho,\mathcal{Y}}  r^{2(d-1)/3}e^{\frac16 \kappa' r^2}\|\phi_\mu-\phi_\nu\|^{2/3}_{L^2(\rho)}+C_{\rho,\mathcal{Y}} r^{d-2}e^{-\frac12 \kappa r^2}.
\end{equation}
Because of the uniform upper bound $\|T_\mu-T_\nu\|_{L^2(\rho)}\leq 2R_{\mathcal{Y}}$, we may assume that $W_1(\mu,\nu)$ is small to establish \eqref{e:stabmaplogcon}; here we assume that it is small enough so that $\|\phi_\mu-\phi_\nu\|_{L^2(\rho)}\leq 1/2$ (recall \eqref{e:stabpotlogconc}). We optimize \eqref{e:onyestpresque} over $r$, namely we choose $r$ in a way that $e^{\frac16 \kappa' r^2}\|\phi_\mu-\phi_\nu\|^{2/3}_{L^2(\rho)}=e^{-\frac12 \kappa r^2}$, namely $r=\left(\frac{4}{\kappa'+3\kappa}|\log \|\phi_\mu-\phi_\nu\|_{L^2(\rho)}|\right)^{1/2}$, we obtain
$$
\|T_\mu-T_\nu\|_{L^2(\rho)}\leq C_{\rho,\mathcal{Y}}\|\phi_\mu-\phi_\nu\|^{\frac{\kappa}{\kappa'+3\kappa}}_{L^2(\rho)}|\log \|\phi_\mu-\phi_\nu\|_{L^2(\rho)}|^{\max(\frac{d-2}{2},\frac{d-1}{3})}
$$
Combining with \eqref{e:stabpotlogconc} and using that $\frac{\kappa}{2\kappa'+7\kappa}<\frac{\kappa}{2\kappa'+6\kappa}$ to compensate the log-factor, we get \eqref{e:stabmaplogcon}.

\section{Variance inequalities}\label{s:variancegen}

Our proofs of Theorems \ref{t:mainjohn}, \ref{t:explosebord} and \ref{t:hallin} mainly rely on a variance inequality for the dual potentials (Theorem \ref{t:varineqjohn}), which extends Theorem \ref{t:stability-compact} to all probability densities $\rho$ having the following three properties: (i) the support of $\rho$ can be decomposed into a family $\mathcal{F}$ of cubes satisfying the so-called Boman chain condition (see Definition \ref{d:boman}); (ii) a doubling property \eqref{e:ctdoubling}; (iii) a uniform upper bound \eqref{e:supratio} on the maximum and minimum value of $\rho$ in each cube of this decomposition. 

In Section \ref{s:whitneyboman}, we provide reminders on the Whitney decomposition and the Boman chain condition. Then in Section \ref{s:glv} we prove a lemma showing how to glue variance inequalities together when items (i) and (ii) described above hold. The proof of this lemma relies on techniques developed to prove Poincaré-Sobolev inequalities in John domains (see for instance \cite{bojarski}). Finally in Section \ref{s:johnpotentials} we deduce the variance inequality (Theorem \ref{t:varineqjohn}).

\subsection{Whitney decomposition and the Boman chain condition}\label{s:whitneyboman}

Let us first recall briefly the Whitney decomposition (see \cite[Appendix J.1]{grafakos} for a proof). A dyadic cube in $\R^d$ is a cube of the form 
$$
\left\{(x_1,\ldots,x_d)\in\R^d \mid m_j2^{-\ell}\leq x_j  < (m_j+1)2^{-\ell} \ \text{for any } j\in[d]\right\}
$$
where $\ell\in\Z$ and $m_j\in\Z$ for any $j\in[d]$.
Roughly speaking, the Whitney decomposition is a decomposition of any open set into dyadic cubes in a way that each of these cubes has a sidelength which is comparable to its distance from the boundary of the open set.
\begin{lemma}[Whitney decomposition]\label{l:whitney}
Let $\X$ be an open non-empty proper subset of $\R^d$. Then there exists a family of closed dyadic cubes $P_j$ such that
\begin{itemize}
\item  The $P_j$'s have disjoint interiors and 
$$
\bigcup _{j}P_{j}=\X.
$$
\item If $\ell (P)$ denotes the sidelength of a cube $P$ and $\X^c=\R^d\setminus\X$, 
\begin{equation}\label{e:distwhitney}
{\sqrt {d}}\ell (P_{j})\leq \dist(P_{j},\X ^{c})\leq 4{\sqrt {d}}\ell (P_{j}).
\end{equation}
\item If the boundaries of two cubes $P_j$ and $P_k$ touch then 
\begin{equation}\label{e:whitneylength}
{\frac {1}{4}}\leq {\frac {\ell (P_{j})}{\ell (P_{k})}}\leq 4.
\end{equation}
\item For a given $P_j$ there exist at most $12^{d}$ cubes $P_{k}$'s that touch it.
\item Let $1<\sigma<5/4$. If $Q_j$ denotes the cube with same center as $P_j$ and $\ell(Q_j)=\sigma\ell(P_j)$, then
$$
\sum_j \chi_{Q_j}\leq 12^d
$$
where $\chi_{Q_j}$ is the characteristic function of $Q_j$.
\end{itemize}
\end{lemma}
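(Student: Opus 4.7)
The plan is to run the classical Whitney construction: start from the full lattice of dyadic cubes in $\R^d$, select those whose sidelength is commensurate with their distance to $\X^c$, and then retain only the maximal ones under inclusion so as to enforce disjoint interiors. Concretely, I would let $\mathcal{F}_0$ be the set of closed dyadic cubes $Q\subset\X$ satisfying
\[
\sqrt{d}\,\ell(Q)\leq \dist(Q,\X^c)\leq 4\sqrt{d}\,\ell(Q),
\]
and then let $\{P_j\}$ be the maximal elements of $\mathcal{F}_0$ under inclusion. Because any two dyadic cubes are either nested or have disjoint interiors, maximality immediately yields the pairwise disjoint interiors, and \eqref{e:distwhitney} for each $P_j$ holds by the very definition of $\mathcal{F}_0$.

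For the covering property, I would fix $x\in\X$ and set $r=\dist(x,\X^c)>0$. Choose the integer $k$ for which $2^{-k}\sqrt{d}\in(r/8,r/2]$ and let $Q_k$ be the unique dyadic cube of sidelength $2^{-k}$ containing $x$. A direct triangle-inequality computation then gives $\dist(Q_k,\X^c)\in[r/2,r]\subset[\sqrt{d}\,\ell(Q_k),4\sqrt{d}\,\ell(Q_k)]$, so that $Q_k\in\mathcal{F}_0$; its (unique) maximal dyadic ancestor in $\mathcal{F}_0$ lies in $\{P_j\}$ and contains $x$.

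The comparability \eqref{e:whitneylength} for touching cubes follows by combining \eqref{e:distwhitney} with the triangle inequality: if $P_j$ and $P_k$ touch, then
\[
\sqrt{d}\,\ell(P_j)\leq \dist(P_j,\X^c)\leq \dist(P_k,\X^c)+\diam(P_k)\leq 5\sqrt{d}\,\ell(P_k),
\]
so $\ell(P_j)\leq 5\ell(P_k)$; by symmetry, and because sidelengths are powers of two, the ratio $\ell(P_j)/\ell(P_k)$ must lie in $\{1/4,1/2,1,2,4\}$. The bounds by $12^d$ on the number of touching neighbors and on the multiplicity of the slight enlargements $Q_j$ reduce to a standard packing estimate: every touching neighbor of $P_j$ lies in a concentric region of radius $O(\ell(P_j))$ and has volume at least $(\ell(P_j)/4)^d$, while for $1<\sigma<5/4$ the enlargement extends only $(\sigma-1)\ell(P_j)/2<\ell(P_j)/8$ beyond $P_j$, so $Q_j$ can only meet cubes that touch $P_j$.

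The main subtlety is the maximality step: the raw family $\mathcal{F}_0$ is typically not pairwise disjoint, because two nested dyadic cubes whose sidelengths differ by a factor of two can both satisfy the selection criterion. Restricting to maximal elements resolves this, and \eqref{e:distwhitney} is automatically preserved, as maximal cubes still belong to $\mathcal{F}_0$ by definition. Once disjointness is in hand, all the remaining items of the statement are quantitative consequences of \eqref{e:distwhitney} and the dyadic structure.
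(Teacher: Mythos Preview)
The paper does not prove this lemma; it is stated as a classical fact with a reference to \cite[Appendix J.1]{grafakos}. Your sketch follows the standard Whitney construction and is essentially correct, so there is nothing substantive to compare.

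One small arithmetic slip: when you pick $k$ with $2^{-k}\sqrt{d}\in(r/8,r/2]$, this interval spans a factor of four and does not guarantee the inclusion $[r/2,r]\subset[\sqrt{d}\,\ell(Q_k),4\sqrt{d}\,\ell(Q_k)]$; indeed, if $2^{-k}\sqrt{d}\in(r/8,r/4)$ then $4\sqrt{d}\,\ell(Q_k)<r$ and the upper bound of \eqref{e:distwhitney} fails. Taking instead the unique $k$ with $2^{-k}\sqrt{d}\in(r/4,r/2]$ fixes this immediately. The rest of the argument (maximality to enforce disjoint interiors, comparability of touching cubes via the triangle inequality and the dyadic constraint, and the $12^d$ packing bounds) is the standard route and is fine as outlined.
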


Under some assumptions on $\X$ and $\rho$ that are studied in Section \ref{s:proofjohn}, the cubes $Q_j$ of the last item of the above lemma (for some fixed $\sigma\in (1,5/4)$) provide another decomposition of $\X$ into cubes which now overlap, and which satisfy the properties listed below (\cite[Lemma 2.1]{boman}, \cite[Section 3]{bojarski}, \cite[Section 4]{iwaniec}):
\begin{definition}[Boman chain condition]\label{d:boman}
Let $\sigma\geq 1$ and $\A,\B,\C>1$. A probability measure $\rho$ on an open set $\X\subset\R^d$ satisfies the Boman chain condition with parameters $\A,\B,\C$ if there exists a covering $\mathcal{F}$ of $\X$ by open cubes $Q\in\mathcal{F}$ such that
\begin{itemize}
\item for $\sigma=\min(10/9,(d+1)/d)$ and for any $x\in\R^d$,
\begin{equation}\label{e:A}
\sum_{Q\in\mathcal{F}}\chi_{\sigma Q}(x)\leq \A\chi_\X(x).
\end{equation}
\item For some fixed cube $Q_0$ in $\mathcal{F}$, called the central cube, and for every $Q\in\mathcal{F}$, there exists a chain $Q_0,Q_1,\ldots,Q_N=Q$ of distinct cubes from $\mathcal{F}$ such that for any $j\in \{0,\ldots,N-1\}$,
\begin{equation}\label{e:BJohn}
Q\subset \B Q_j
\end{equation}
where $BQ_j$ is the cube with same center as $Q_j$, and sidelength multiplied by $B$.
\item Consecutive cubes of the above chain overlap quantitatively: for any $j\in \{0,\ldots,N-1\}$,
\begin{equation}\label{e:conseccubeoverlap}
\rho(Q_j\cap Q_{j+1})\geq \C^{-1}\max(\rho(Q_j),\rho(Q_{j+1})).
\end{equation}
\end{itemize}
\end{definition}
Note that in the above definition, the length $N$ of the connecting chain depends on $Q\in\mathcal{F}$ and is not assumed to be uniformly bounded in $Q\in\mathcal{F}$. 
Also we could have replaced $\sigma Q$ in \eqref{e:A} simply by $Q$ for the purpose of this paper, but we keep the same convention as in \cite{boman}, \cite{bojarski}, \cite{iwaniec} to remain coherent with the literature. Finally, let us observe that \eqref{e:conseccubeoverlap} implies that consecutive cubes of the above chain are comparable in size: for any $j\in \{0,\ldots,N-1\}$,
\begin{equation}\label{e:conseccubessize}
\C^{-1}\leq \frac{\rho(Q_j)}{\rho(Q_{j+1})}\leq \C.
\end{equation}
Indeed, if \eqref{e:conseccubeoverlap} holds, then 
$$
\rho(Q_j)\geq \rho(Q_j \cap Q_{j+1}) \geq C^{-1}\max(\rho(Q_j), \rho(Q_{j+1})) \geq C^{-1}\rho (Q_{j+1}),
$$
and the other bound in \eqref{e:conseccubessize} is proved in the same way by reversing $j$ and $j+1$.

\subsection{Gluing variance inequalities}\label{s:glv}

Let $\rho$ be a probability density over a domain $\X$. We assume that $\rho$ satisfies the Boman chain condition for some $\A,\B,\C>1$. Let $\mathcal{F}$ be a covering as in Definition \ref{d:boman}. For any cube $Q\in\mathcal{F}$ we consider
\begin{equation*}\label{e:BrenierpotQ}
\tilde{\rho}_Q=\frac{\rho_{|Q}}{\rho(Q)}.
\end{equation*}
A key step in the proof of Theorem \ref{t:varineqjohn} is the following lemma. 
\begin{lemma}\label{l:gluevarjohn}
Let $\rho$ be a probability density over a domain $\X\subset\R^d$ satisfying the Boman chain condition for some covering $\mathcal{F}$ and some $\A,\B,\C>1$. Assume moreover that there exists $\D>0$ such that
\begin{equation}\label{e:ctdoubling}
\forall Q\in\mathcal{F}, \qquad \rho(\ct Q)\leq \D \rho(Q).
\end{equation}
Then for any continuous function $f$ on $\X$, there holds 
$$
{\rm Var}_\rho(f)\leq 200\A^2\C \D^3 \sum_{Q\in\mathcal{F}}\rho(Q)\Var_{\tilde{\rho}_Q}(f).
$$
\end{lemma}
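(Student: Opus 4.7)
The plan is to reduce the global variance to a telescoping sum along Boman chains, and then convert the resulting squared $\ell^1$-type quantity into a pointwise bound involving the dilated cubes $BP$. Let $Q_0$ denote the central cube from Definition \ref{d:boman}, and abbreviate $f_E := \rho(E)^{-1} \int_E f \, d\rho$ for any $E$ with $\rho(E) > 0$. Because $\Var_\rho(f) \leq \int_\X |f - f_{Q_0}|^2 d\rho$ (by choosing $c = f_{Q_0}$ in the definition of variance) and $\chi_\X \leq \sum_{Q \in \mathcal{F}} \chi_Q$ (as $\mathcal{F}$ covers $\X$), the elementary bound $|f - f_{Q_0}|^2 \leq 2|f - f_Q|^2 + 2|f_Q - f_{Q_0}|^2$ on each $Q$ immediately gives
\begin{equation*}
\Var_\rho(f) \leq 2 \sum_{Q \in \mathcal{F}} \rho(Q) \Var_{\tilde{\rho}_Q}(f) + 2\Sigma, \qquad \Sigma := \sum_{Q \in \mathcal{F}} \rho(Q)|f_Q - f_{Q_0}|^2,
\end{equation*}
so the task reduces to estimating $\Sigma$ by a constant multiple of the first sum.

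To bound $\Sigma$, for each $Q$ I would telescope along the Boman chain $Q_0 = P_0, P_1, \ldots, P_N = Q$ to obtain $|f_Q - f_{Q_0}| \leq \sum_{j=0}^{N-1}|f_{P_j} - f_{P_{j+1}}|$. Each individual jump is controlled through $R_j := P_j \cap P_{j+1}$: by the overlap condition \eqref{e:conseccubeoverlap}, $\rho(R_j) \geq C^{-1}\max(\rho(P_j), \rho(P_{j+1}))$, and Cauchy--Schwarz gives
\begin{equation*}
|f_{P_j} - f_{R_j}|^2 \leq \rho(R_j)^{-1}\!\int_{P_j}|f - f_{P_j}|^2 d\rho \leq C\,\Var_{\tilde{\rho}_{P_j}}(f),
\end{equation*}
together with its analogue for $P_{j+1}$. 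Setting $a_P := \Var_{\tilde{\rho}_P}(f)^{1/2}$, the triangle inequality yields $|f_{P_j} - f_{P_{j+1}}| \leq \sqrt{C}\,(a_{P_j} + a_{P_{j+1}})$, and squaring the telescoped sum produces
\begin{equation*}
|f_Q - f_{Q_0}|^2 \leq 4C\,\Bigl(\textstyle\sum_{P \in \mathrm{chain}(Q)} a_P\Bigr)^{2}.
\end{equation*}

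The crux is then to handle this squared sum, whose length $N$ is not uniformly bounded. The idea is to use condition \eqref{e:BJohn}, which says that every $P$ in the chain of $Q$ satisfies $Q \subset BP$, so that the discrete sum becomes a pointwise bound: for any $x \in Q$,
\begin{equation*}
\sum_{P \in \mathrm{chain}(Q)} a_P \leq G(x), \qquad G(x) := \sum_{P \in \mathcal{F}} a_P\, \chi_{BP}(x).
\end{equation*}
Integrating the pointwise inequality $(\sum_P a_P)^2 \leq G(x)^2$ over $Q$ against $\rho$, and then summing over $Q$ while invoking the multiplicity bound \eqref{e:A}, gives $\Sigma \leq 4CA\int_\X G^2\,d\rho$. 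The pointwise Cauchy--Schwarz bound $G(x)^2 \leq K \sum_P a_P^2\,\chi_{BP}(x)$, where $K$ is the overlap multiplicity of the family $\{BP\}_{P \in \mathcal{F}}$, combined with \eqref{e:ctdoubling} in the form $\rho(BP) \leq \rho(\ct P) \leq D\rho(P)$, then yields $\int_\X G^2 d\rho \leq KD \sum_P \rho(P)\Var_{\tilde{\rho}_P}(f)$, which is the desired form.

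The main obstacle, and the source of the powers $A^2$ and $D^3$ in the constant $200 A^2 C D^3$, is to bound the $B$-dilated overlap multiplicity $K$ by a product of powers of $A$ and $D$ alone, rather than by an uncontrolled dimensional factor $B^d$. I expect this to be handled by a geometric argument combining the Boman chain structure with iterated applications of the doubling property \eqref{e:ctdoubling}: cubes $P$ with $x \in BP$ are, up to factors of $D$, of comparable $\rho$-measure, so the $B$-enlarged overlap is comparable to the $\sigma$-enlarged overlap bounded by $A$ in \eqref{e:A}, at the cost of a controlled power of $D$.
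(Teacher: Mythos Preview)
Your argument tracks the paper's proof closely up through the bound $\Sigma \leq 4CA\int_\X G^2\,d\rho$ with $G=\sum_{P\in\mathcal{F}} a_P\chi_{BP}$. The divergence, and the genuine gap, is in how you pass from $\|G\|_{L^2(\rho)}^2$ back to $\sum_P \rho(P)a_P^2$.

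Your plan is to apply pointwise Cauchy--Schwarz with the multiplicity $K=\sup_x\#\{P:x\in BP\}$ of the $B$-dilated family, and then hope that $K$ is controlled by $A$ and $D$. But $K$ is in general \emph{not} bounded in these terms. Already for Whitney cubes with Lebesgue measure, a point $x$ lies in $BP$ whenever $\ell(P)\gtrsim \dist(x,\partial\X)/B$, and there is no matching upper bound on $\ell(P)$: cubes of every dyadic scale from $\dist(x,\partial\X)/B$ up to $\diam(\X)$ can contribute, so $K$ behaves like $B^d\log\bigl(\diam(\X)/\dist(x,\partial\X)\bigr)$, which is unbounded as $x\to\partial\X$. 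Your heuristic that ``cubes $P$ with $x\in BP$ have comparable $\rho$-measure'' is not delivered by \eqref{e:ctdoubling}: that hypothesis compares $\rho(P)$ to $\rho(5B\sqrt{d}\,P)$ for a \emph{single} cube, not $\rho(P)$ to $\rho(P')$ for two unrelated cubes whose $B$-dilates share a point.

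The paper sidesteps the pointwise multiplicity entirely. Instead of Cauchy--Schwarz on $G$, it proves the $L^2$ bound
\[
\Bigl\|\sum_{P\in\mathcal{F}} a_P\chi_{BP}\Bigr\|_{L^2(\rho)}\;\leq\;(2D)^{3/2}\,\Bigl\|\sum_{P\in\mathcal{F}} a_P\chi_{P}\Bigr\|_{L^2(\rho)}
\]
(Lemma~\ref{l:maximal}) via a Hardy--Littlewood maximal operator associated to the family $\{BP\}$: one shows the weak $(1,1)$ bound by a Vitali covering argument (this is where the $5B\sqrt{d}$ in \eqref{e:ctdoubling} and the factor $D$ enter), interpolates with the trivial $L^\infty$ bound by Marcinkiewicz, and then dualizes. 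Only \emph{after} passing from $\chi_{BP}$ to $\chi_P$ does one apply Cauchy--Schwarz, now with the bounded multiplicity $A$ of the undilated family. This is what produces $A^2$ (one $A$ from summing over $Q$, one from Cauchy--Schwarz on $\sum a_P\chi_P$) and $D^3$ (from $(2D)^{3/2}$ squared). Your last paragraph correctly identifies this step as the crux, but the resolution is analytic (maximal function theory) rather than the combinatorial-geometric bound you conjecture.
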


\begin{proof}[Proof of Lemma \ref{l:gluevarjohn}]
We denote by $f_Q=\int_Q f d\tilde{\rho}_Q$ the mean of $f$ over $Q$.
Let $Q_0$ be a central cube as in Definition \ref{d:boman}. By definition of the variance,
\begin{equation}\label{e:varQ0}
{\rm Var}_\rho(f)\leq \int_\X|f(x)-f_{Q_0}|^2d\rho(x).
\end{equation}
We estimate the right-hand side of \eqref{e:varQ0}. For $Q\in\mathcal{F}$, we set $a_{Q}=(\Var_{\tilde{\rho}_Q}(f))^{1/2}$. Since $\mathcal{F}$ is a covering of $\X$,
\begin{align}
\int_\X |f-f_{Q_0}|^2d\rho(x)&\leq \sum_{Q\in \mathcal{F}}\int_Q |f(x)-f_{Q_0}|^2d\rho(x)\nonumber \\
&\leq 2 \sum_{Q\in\mathcal{F}}\left(\int_Q |f(x)-f_Q|^2d\rho(x)+\int_Q |f_Q-f_{Q_0}|^2d\rho(x)\right)\nonumber\\
&= 2\sum_{Q\in\mathcal{F}} \rho(Q)a_Q^2 + 2\sum_{Q\in\mathcal{F}}\int_Q |f_Q-f_{Q_0}|^2d\rho(x)\label{e:2terms}
\end{align}
Fix $Q\in\mathcal{F}$ and take a chain $Q_0,Q_1,\ldots,Q_N=Q$ of distinct cubes from $\mathcal{F}$ satisfying the assumptions of Definition \ref{d:boman}. By the triangle inequality, we have
\begin{equation}\label{e:boj1}
\left(\int_Q |f_Q-f_{Q_0}|^2d\rho(x)\right)^{\frac12}\leq \sum_{j=0}^{N-1}\left(\int_Q |f_{Q_j}-f_{Q_{j+1}}|^2d\rho(x)\right)^{\frac12}
\end{equation}
For fixed $j$, using the last condition in Definition \ref{d:boman} and viewing $|f_{Q_j}-f_{Q_{j+1}}|$ as a constant function, we have
\begin{align*}
&\int_Q |f_{Q_j}-f_{Q_{j+1}}|^2d\rho(x)\\
&\qquad\qquad=\frac{\rho(Q)}{\rho(Q_j \cap Q_{j+1})}\int_{Q_j\cap Q_{j+1}} |f_{Q_j}-f_{Q_{j+1}}|^2d\rho(x)\\
&\qquad\qquad\leq 2\frac{\rho(Q)}{\rho(Q_j \cap Q_{j+1})}\left(\int_{Q_j\cap Q_{j+1}} |f_{Q_j}-f(x)|^2d\rho(x)+\int_{Q_j\cap Q_{j+1}} |f_{Q_{j+1}}-f(x)|^2d\rho(x)\right)\\
&\qquad\qquad\leq 2\C \frac{\rho(Q)}{\rho(Q_j)}\int_{Q_j}|f_{Q_j}-f(x)|^2d\rho(x)+2\C \frac{\rho(Q)}{\rho(Q_{j+1})}\int_{Q_{j+1}}|f_{Q_{j+1}}-f(x)|^2d\rho(x)\\
&\qquad\qquad\leq 2\C \rho(Q)(a_{Q_j}^2+a_{Q_{j+1}}^2).
\end{align*}
Taking the square root, we obtain
$$
\left(\int_Q |f_{Q_j}-f_{Q_{j+1}}|^2d\rho(x)\right)^{\frac12}\leq (2\C)^{\frac12} \rho(Q)^{\frac12}(a_{Q_j}+a_{Q_{j+1}}).
$$
Plugging into \eqref{e:boj1}, and using that $Q\subset BQ_j$ and $Q\subset BQ_{j+1}$, this yields
$$
\left(\int_Q |f_Q-f_{Q_0}|^2d\rho(x)\right)^{\frac12}\leq (8\C)^{\frac12}\rho(Q)^{\frac12}\sum_{Q\subset B\tilde{Q}} a_{\tilde{Q}}
$$
where the symbol $\displaystyle\sum_{Q\subset B\tilde{Q}}$ means that the sum runs over all cubes $\tilde{Q}\in\mathcal{F}$ such that $Q\subset B\tilde{Q}$ (recall that $Q$ is fixed). Taking the square, we get
\begin{align}
\int_Q |f_Q-f_{Q_0}|^2d\rho(x)\leq 8 \C\rho(Q)\Bigl(\sum_{Q\subset B\tilde{Q}} a_{\tilde{Q}}\Bigr)^2&=8\C  \int_Q \Bigl(\sum_{Q\subset B\tilde{Q}} a_{\tilde{Q}}\Bigr)^2 d\rho(x)\nonumber\\
&\leq 8\C \int_Q\Bigl(\sum_{\tilde{Q}\in \mathcal{F}}a_{\tilde{Q}}\chi_{B\tilde{Q}}(x)\Bigr)^2d\rho(x)\label{e:bqtilde}
\end{align}
where in the last line we used that for any $x\in Q$, 
$$
\sum_{Q\subset B\tilde{Q}} a_{\tilde{Q}}\leq \sum_{\tilde{Q}\in \mathcal{F}}a_{\tilde{Q}}\chi_{B\tilde{Q}}(x).
$$
The following lemma, which is a generalization of \cite[Lemma 4.2]{bojarski}, is proved in Appendix \ref{a:maximal}:
\begin{lemma} \label{l:maximal}
Under the assumption \eqref{e:ctdoubling} there holds
\begin{equation}\label{e:maximalforrho}
\Bigl\|\sum_{\tilde{Q}\in \mathcal{F}}a_{\tilde{Q}}\chi_{B\tilde{Q}}\Bigr\|_{L^2(\rho)} \leq (2\D)^{3/2}\Bigl\|\sum_{\tilde{Q}\in \mathcal{F}}a_{\tilde{Q}}\chi_{\tilde{Q}}\Bigr\|_{L^2(\rho)}.
\end{equation}
\end{lemma}

Summing \eqref{e:bqtilde} over $Q\in\mathcal{F}$, we obtain 
\begin{align}
\sum_{Q\in \mathcal{F}}\int_Q|f_Q - f_{Q_0}|^2\rho(x)&\leq 8\A \C\int_\X\Bigl(\sum_{\tilde{Q}\in \mathcal{F}}a_{\tilde{Q}}\chi_{B\tilde{Q}}(x)\Bigr)^2d\rho(x)\nonumber\\
&\leq 64\A\C\D^3\int_\X\Bigl(\sum_{\tilde{Q}\in \mathcal{F}}a_{\tilde{Q}}\chi_{\tilde{Q}}(x)\Bigr)^2d\rho(x)\nonumber\\
&\leq 64\A^2\C\D^3\int_\X \sum_{\tilde{Q}\in\mathcal{F}} a_{\tilde{Q}}^2\chi_{\tilde{Q}}(x)d\rho(x)\nonumber\\
&= 64\A^2\C\D^3\sum_{\tilde{Q}\in\mathcal{F}}\rho(\tilde{Q})a_{\tilde{Q}}^2\label{e:II}
\end{align}
where for the first inequality we use the first point in Definition \ref{d:boman}, and for the third inequality we use Cauchy-Schwarz together with the first point in Definition \ref{d:boman}. Finally, putting together \eqref{e:varQ0}, \eqref{e:2terms} and \eqref{e:II}, we get the lemma. 
\end{proof}

\subsection{Remarks on uniqueness of Brenier potentials and Poincaré-Wirtinger inequalities}
\label{s:uniqueness}
Before stating and proving the main result of Section \ref{s:variancegen}, namely Theorem \ref{t:varineqjohn}, let us comment on uniqueness of Brenier potentials and link Lemma \ref{l:gluevarjohn} with Poincaré-Wirtinger inequalities. In the settings considered in the present paper, Brenier potentials are always unique: this is a consequence of the connectedness of the support of $\rho$, see for instance \cite[Proposition 7.18]{santambrogio} or \cite[Remark 10.30]{villani2}. However, if the support of $\rho$ is not connected, then in general the Brenier potentials are not unique:
\begin{proposition}
Let $\X_1$ and $\X_2$ be two non-empty bounded subsets of $\R^d$ with $\dist(\X_1,\X_2)>0$. Let $\rho$ be a probability measure on $\X=\X_1\cup \X_2$ such that $\min(\rho(\X_1),\rho(\X_2))>0$. Then there exists a $1$-parameter family of Brenier potentials corresponding to the identity mapping on $\X$ (which is the optimal transport map between $\rho$ and $\rho$), and any two of them are not $\rho$-almost everywhere equal.
\end{proposition}
\begin{proof}
Consider $\phi_\alpha(x)=\frac12|x|^2+\alpha \chi_{\X_1}(x)+c_\alpha$, where $\chi_{\X_1}$ is the characteristic function of $\X_1$ and $c_\alpha$ is chosen so that $\int_{\X}\phi_\alpha(x)d\rho(x)=0$. Then $\nabla\phi_\alpha$ transports $\rho$ to itself for any $\alpha$, and it is easy to check that for $0\leq \alpha<\dist(\X_1,\X_2)^2/2$, there holds $(\phi_\alpha^*)^*=\phi_\alpha$ on $\X$. Thus $\nabla\phi_\alpha$ is actually the optimal transport map.   
\end{proof}

Let us point out that in the setting of Lemma \ref{l:gluevarjohn}, not only the support of $\rho$ is connected, but $\rho$ even satisfies an $L^2$-Poincaré-Wirtinger inequality. Therefore, uniqueness of Brenier potentials may be checked also in the following way: the equality $\nabla\phi=\nabla\phi'$ $\rho$-almost everywhere together with $\int_{\X}\phi d\rho=\int_{\X}\phi'd\rho=0$ implies that $\phi=\phi'$.
Let us show that if $\rho$ satisfies the assumptions of Lemma~\ref{l:gluevarjohn} and also satisfies $\sup_{Q} \frac{M_{\tilde{\rho}_Q}}{m_{\tilde{\rho}_Q}}<+\infty$, then it supports a $L^2$-Poincaré-Wirtinger inequality:
\begin{equation}\label{e:poincare}
{\rm Var}_\rho(f)\leq C_P\int_{\X}|\nabla f|^2 d\rho.
\end{equation}
To prove \eqref{e:poincare}, we first observe that there exists $C_P'>0$ such that for any $Q\in\mathcal{F}$ there holds
\begin{equation}\label{e:poincareQ}
{\rm Var}_{\tilde{\rho}_Q}(f)\leq C_P'\int_{Q}|\nabla f|^2d\tilde{\rho}_Q.
\end{equation}
Indeed, since $\sup_{Q} \frac{M_{\tilde{\rho}_Q}}{m_{\tilde{\rho}_Q}}<+\infty$, it is sufficient to prove that \eqref{e:poincareQ} holds when $\tilde{\rho}_Q$ is replaced by the normalized Lebesgue measure on $Q$, with a $Q$-independent constant $C_P'$. This latter fact holds because all $Q$ are cubes, with uniformly bounded diameter. Summing \eqref{e:poincareQ} over $Q\in\mathcal{F}$ with weights $\rho(Q)$, and using \eqref{e:A} we get \eqref{e:poincare}.

\subsection{The variance inequality}\label{s:johnpotentials}
In the rest of the paper, for any probability density $\rho_0$ supported on $\X_0\subset \R^d$ we set 
\begin{equation}\label{e:supetinf}
M_{\rho_0}=\sup_{x\in \X_0} \rho_0(x), \qquad m_{\rho_0}=\inf_{x\in \X_0}\rho_0(x).
\end{equation}
Given a compact set $\mathcal{Y}\subset\R^d$, we set
\begin{equation}\label{e:RY}
R_{\mathcal{Y}}=\max_{y\in \mathcal{Y}} |y|.
\end{equation}
Recall from \eqref{eq:conv-conjugate} that the Legendre-Fenchel transform $\psi^*$ of a function $\psi\in\mathcal{C}^0(\mathcal{Y})$ is defined as $\psi^*(x)=\max_{y\in\mathcal{Y}}\ \sca{x}{y}-\psi(y)$. In this section we prove the following variance inequality:
\begin{theorem}\label{t:varineqjohn}
Let $\rho$ be a probability density over a domain $\X\subset\R^d$ satisfying the Boman chain condition for some covering $\mathcal{F}$ and some $\A,\B,\C>1$. Assume moreover that \eqref{e:ctdoubling} holds for some $\D>0$ and that there exists $\E>0$ such that
\begin{equation}\label{e:supratio}
\sup_{Q\in\mathcal{F}}\frac{M_{\rho_Q}}{m_{\rho_Q}}\leq \E<+\infty.
\end{equation}
Let $\mathcal{Y}\subset\R^d$ be a compact set. Set $C'= 200e\A^3\C\D^3\E R_{\mathcal{Y}}{\rm diam}(\X)>0$. Then, for any $\psi_0, \psi_1\in \mathcal{C}^0(\mathcal{Y})$, there holds
\begin{equation} \label{e:varineqgeneral}
{\rm Var}_\rho\left(\psi_1^*-\psi_0^*\right) \leq C' \sca{\psi_0 - \psi_1}{(\nabla \psi_1^*)_{\#}\rho-(\nabla \psi_0^*)_{\#}\rho}.
\end{equation}
\end{theorem}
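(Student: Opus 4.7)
The plan is to apply Theorem~\ref{t:stability-compact} locally on each cube $Q \in \mathcal{F}$, sum the resulting local inequalities using the overlap bound \eqref{e:A} together with a monotonicity argument, and finally invoke Lemma~\ref{l:gluevarjohn} to convert the sum of local variances into the global variance.

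First, I would fix $Q \in \mathcal{F}$ and apply Theorem~\ref{t:stability-compact} on the compact convex cube $Q$ with the reference density $\sigma = \chi_Q/|Q|$ (normalized uniform, hence log-concave) and with $\tilde\rho_Q$ playing the role of $\rho$. Assumption \eqref{e:supratio} furnishes $M/m \leq \E$ in the notation of Theorem~\ref{t:stability-compact}, and $\diam(Q) \leq \diam(\X)$ since $Q \subset \X$, so the conclusion reads
\begin{equation*}
\Var_{\tilde\rho_Q}(\psi_1^* - \psi_0^*) \leq e \E R_{\mathcal{Y}} \diam(\X)\, \sca{\psi_1 - \psi_0}{\nabla\psi_{0\#}^* \tilde\rho_Q - \nabla\psi_{1\#}^* \tilde\rho_Q}.
\end{equation*}

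Multiplying by $\rho(Q)$ (so that $\rho(Q) \tilde\rho_Q = \chi_Q \rho$ as measures) and summing over $Q \in \mathcal{F}$ rewrites the right-hand side as $\int h(x) \sum_Q \chi_Q(x)\, d\rho(x)$ where $h(x) = (\psi_1-\psi_0)(\nabla\psi_0^*(x)) - (\psi_1-\psi_0)(\nabla\psi_1^*(x))$. Writing $y_i = \nabla\psi_i^*(x)$, the two Fenchel–Young inequalities $\psi_i^*(x) \geq \sca{x}{y_{1-i}} - \psi_i(y_{1-i})$ for $i = 0,1$ sum to $h(x) \geq 0$, so the overlap bound $\sum_Q \chi_Q \leq \A \chi_\X$ from \eqref{e:A} gives
\begin{equation*}
\sum_{Q \in \mathcal{F}} \rho(Q) \sca{\psi_1 - \psi_0}{\nabla\psi_{0\#}^* \tilde\rho_Q - \nabla\psi_{1\#}^* \tilde\rho_Q} \leq \A\, \sca{\psi_1 - \psi_0}{\nabla\psi_{0\#}^*\rho - \nabla\psi_{1\#}^*\rho}.
\end{equation*}

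Finally, since $\psi_0,\psi_1 \in \mathcal{C}^0(\mathcal{Y})$ and $\mathcal{Y}$ is compact, the conjugates $\psi_i^*$ are finite and continuous on $\R^d$, hence Lemma~\ref{l:gluevarjohn} applies to $f = \psi_1^* - \psi_0^*$ and yields $\Var_\rho(f) \leq 200\, \A^2 \C \D^3 \sum_{Q\in\mathcal{F}} \rho(Q) \Var_{\tilde\rho_Q}(f)$. Chaining this estimate with the two displays above produces \eqref{e:varineqgeneral} with the advertised constant $C' = 200 e\, \A^3 \C \D^3 \E R_{\mathcal{Y}} \diam(\X)$. The only subtle ingredient is the sign/monotonicity argument when summing the local inequalities, which is fortunately built into the convex-conjugate structure; the rest amounts to chaining the three displayed bounds.
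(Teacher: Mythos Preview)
Your proof is correct and follows essentially the same approach as the paper: apply Theorem~\ref{t:stability-compact} on each cube, glue via Lemma~\ref{l:gluevarjohn}, and control the sum using the overlap bound together with the non-negativity of the local pairings. Your pointwise argument for $h(x)\geq 0$ via the two Fenchel--Young inequalities is slightly more direct than the paper's version, which instead passes through an auxiliary partition $\mathcal{F}'$ of $\X$ (grouping points by which cubes of $\mathcal{F}$ contain them) and uses non-negativity on each piece; the two arguments are equivalent and yield the same constant.
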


\begin{proof}[Proof of Theorem \ref{t:varineqjohn}]
We apply Theorem \ref{t:stability-compact} to $\tilde{\rho}_Q$ on the compact convex set $Q$, with $\sigma$ the normalized Lebesgue measure on $Q$. We obtain
\begin{equation}\label{e:varconvex}
{\rm Var}_{\tilde{\rho}_Q}\left(\psi_1^*-\psi_0^*\right) \leq \CDM \sca{\psi_0 - \psi_1}{(\nabla \psi_1^*)_{\#}\tilde{\rho}_Q-(\nabla \psi_0^*)_{\#}\tilde{\rho}_Q}
\end{equation}
where $\CDM=e \E  R_{\mathcal{Y}} \diam(\X)$ is a constant which does not depend on $Q$.
Putting together Lemma \ref{l:gluevarjohn} with \eqref{e:varconvex} we get
\begin{align}
{\rm Var}_\rho(\psi_1^*-\psi_0^*)&\leq 200\A^2\C\D^3 \sum_{Q\in\mathcal{F}}\rho(Q)\Var_{\tilde{\rho}_Q}(\psi_1^*-\psi_0^*)\nonumber\\
&\leq 200\A^2\C\D^3 \CDM \sum_{Q\in\mathcal{F}}\sca{\psi_0-\psi_1}{(\nabla \psi_1^*)_\#\rho_{|Q}- (\nabla \psi_0^*)_\#\rho_{|Q}}\label{e:fatal}
\end{align}
Each point $x\in\X$ belongs to at most $\A$ cubes $Q\in\mathcal{F}$ due to the first point of Definition \ref{d:boman}. We define a partition $\mathcal{F}'$ of $\X$ as follows: $x,x'$ belong to the same element $P\in\mathcal{F}'$ if and only if they belong to exactly the same elements of $\mathcal{F}$. Hence it follows from \eqref{e:fatal} and the lower bound $\sca{\psi_0-\psi_1}{(\nabla \psi_1^*)_\#\rho_{|P}- (\nabla \psi_0^*)_\#\rho_{|P}}\geq 0$ valid for any $P\in\mathcal{F}'$ that
\begin{align}
{\rm Var}_\rho(\psi_1^*-\psi_0^*)&\leq 200\A^3\C\D^3 \CDM\sum_{P\in\mathcal{F}'}\sca{\psi_0-\psi_1}{(\nabla \psi_1^*)_\#\rho_{|P}- (\nabla \psi_0^*)_\#\rho_{|P}}\label{e:fatal2}\\
&=200\A^3\C\D^3 \CDM\sca{\psi_0-\psi_1}{(\nabla \psi_1^*)_\#\rho- (\nabla \psi_0^*)_\#\rho}
\end{align}
(the last equality holds because $\mathcal{F}'$ is a partition). This proves Theorem \ref{t:varineqjohn}.
\end{proof}

\section{Proof of Theorems \ref{t:mainjohn}, \ref{t:explosebord} and \ref{t:hallin}: bounded domains} \label{s:proofjohn}

We prove successively Theorems \ref{t:mainjohn}, \ref{t:explosebord} and \ref{t:hallin} in Sections \ref{s:johnconclusion}, \ref{s:stabpotexpl} and \ref{s:hallinmap} respectively. These three proofs are based on the variance inequality proved in Theorem \ref{t:varineqjohn}. We apply it to the dual Brenier potentials: recalling that the Brenier potential associated to the optimal transport from $\rho$ to $\mu$ is denoted by $\phi_\mu$ (and is extended by $+\infty$ outside $\X$), its Legendre transform defined by
\begin{equation}\label{e:dualbrenier}
\forall y\in\mathcal{Y}, \qquad \psi_\mu(y)=\sup_{x\in\mathcal{X}}\ \sca{x}{y}-\phi_\mu(x)
\end{equation}
is called the dual Brenier potential. Since $\X$ is bounded, $\psi_\mu$ is a (convex) ${\rm diam}(\X)$-Lipschitz function on $\mathcal{Y}$. Moreover, since $\phi_\mu$ is convex, $\psi_\mu^*=\phi_\mu$ where $\psi_\mu^*$ is defined via \eqref{eq:conv-conjugate}, or equivalently 
$$
\forall x\in\X, \qquad \psi_\mu^*(x)=\sup_{y\in\mathcal{Y}} \ \sca{x}{y}-\psi_\mu(y).
$$

We start in Section \ref{s:bomancubes} with the construction of a family $\mathcal{F}$ of cubes which will be used in all three proofs as the covering of the Boman chain condition.

\subsection{Construction of the Boman cubes} \label{s:bomancubes}
Let $\X$ be a non-empty proper John domain of $\R^d$, and consider its Whitney decomposition into closed dyadic cubes $P_j$ recalled in Lemma \ref{l:whitney}. Let us fix $\sigma =\min(10/9,(d+1)/d)$. For any $j$, we set $Q_j=\sigma P_j$; then $\mathcal{F}$ is defined as the set of all $Q_j$. In particular, due to \eqref{e:distwhitney}, there holds
\begin{equation}\label{e:bomanloin}
    \frac{1}{2}{\sqrt {d}}\ell (Q_{j})\leq \dist(Q_{j},\X ^{c})\leq 5{\sqrt {d}}\ell (Q_{j}).
\end{equation}

In the next sections, we will use this decomposition $(Q_j)_{j\in\mathcal{F}}$ to verify the Boman chain condition. Of course,  condition \eqref{e:conseccubeoverlap} depends on the density $\rho$ that we put on $\X$; but it is already possible to verify \eqref{e:A} and \eqref{e:BJohn}, which are properties of $\mathcal{F}$ (and $\X$), independent of $\rho$. 

Let us first check \eqref{e:A}. Denote by $\mathcal{W}$ the set of cubes in the Whitney decomposition of $\X$. For $x\in\X$,
$$
\sum_{Q_j\in\mathcal{F}} \chi_{\sigma Q_j}(x)=\sum_{P_j\in \mathcal{W}} \chi_{\sigma^2 P_j}(x)\leq 12^d
$$
where we used $\sigma^2<5/4$ and the last property of the Whitney decomposition listed in Lemma \ref{l:whitney}. For $x\notin \X$, notice that $x$ does not belong to any cube $\sigma Q_j$ due to \eqref{e:bomanloin}, and therefore \eqref{e:A} also holds in this case. Then, \eqref{e:BJohn} is a consequence of $\X$ being a John domain, according to the following result:

\begin{lemma}(see \cite[Lemma 2.1]{boman}) \label{l:boman}
Let $\X\subset\R^d$ be a John domain, and let $m$ denote the Lebesgue measure on $\X$. Then $m$ satisfies the Boman chain condition for the above covering $(Q_j)_{j\in\mathcal{F}}$ and some $\A,\B,\C>0$. In particular, \eqref{e:BJohn} holds (since \eqref{e:BJohn} is a property of $\mathcal{F}$).
\end{lemma}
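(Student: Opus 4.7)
The plan is to construct the connecting chain explicitly from the John path and then verify the four items of Definition~\ref{d:boman} in turn. Item \eqref{e:A} is free, being exactly the last bullet of Lemma~\ref{l:whitney} for the chosen $\sigma$. For the central cube I take $Q_* := \sigma P_*$ where $P_*$ is a Whitney cube containing the John reference point $x_0$. Given an arbitrary $Q = \sigma P \in \mathcal{F}$, I pick any $x \in P$, let $\gamma: [0, \ell(\gamma)] \to \X$ be the John path from $x$ to $x_0$, and list the distinct Whitney cubes $P = P^{(0)}, P^{(1)}, \dots, P^{(N)} = P_*$ that $\gamma$ successively enters. Because the Whitney cubes tile $\X$ and $\gamma$ is continuous, consecutive $P^{(k)}, P^{(k+1)}$ touch in the sense of Lemma~\ref{l:whitney} and hence satisfy \eqref{e:whitneylength}; reversing the order and setting $Q_j := \sigma P^{(N-j)}$ provides the candidate chain $Q_* = Q_0, Q_1, \dots, Q_N = Q$.

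The heart of the matter is \eqref{e:BJohn}. Fix any $\tilde Q = \sigma \tilde P$ on the chain and pick $t$ with $\gamma(t) \in \tilde P$. The John bound $\dist(\gamma(t), \partial \X) \geq \alpha t/\ell(\gamma) \geq \alpha t/\beta$, combined with Whitney's relation \eqref{e:distwhitney} applied to $\tilde P$ (plus a triangle-inequality correction of order $\diam(\tilde P)$), produces $\ell(\tilde Q) \geq c_1 \alpha t /\beta$ for a dimensional constant $c_1 > 0$. Since $\gamma(0) \in P \subset Q$, we also have $\dist(\tilde Q, Q) \leq t$, hence $\dist(\tilde Q, Q) \leq (\beta/(c_1\alpha))\ell(\tilde Q)$. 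The remaining ingredient needed to conclude $Q \subset B\tilde Q$ is a comparison $\ell(Q) \leq C \ell(\tilde Q)$ with $C$ independent of the chain step, which I obtain by a case split on $t$: if $t \leq \tfrac12 \dist(P, \partial\X)$, then $\tilde P$ lies close enough to $P$ that their distances to $\partial \X$ are comparable and \eqref{e:distwhitney} forces $\ell(\tilde Q) \geq \ell(Q)/8$ directly; otherwise $t > \tfrac12 \dist(P, \partial \X) \geq \tfrac12 \sqrt d\, \ell(P)$ and the John bound itself yields $\ell(\tilde Q) \geq c_2 (\alpha/\beta)\ell(Q)$. Assembling the three estimates shows that every point of $Q$ lies within $B\ell(\tilde Q)/2$ of the centre of $\tilde Q$ for some $B = B(\alpha, \beta, d)$, which is \eqref{e:BJohn}.

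Items \eqref{e:conseccubessize} and \eqref{e:conseccubeoverlap} for the Lebesgue measure $m$ reduce to pure geometry. Consecutive cubes $Q_j, Q_{j+1}$ of the chain come from touching Whitney cubes whose sidelengths are in ratio $[1/4, 4]$ by \eqref{e:whitneylength}, so $m(Q_j)/m(Q_{j+1}) \in [4^{-d}, 4^d]$ and $\C = 4^d$ works in \eqref{e:conseccubessize}. For \eqref{e:conseccubeoverlap}, the strict inequality $\sigma > 1$ forces the $\sigma$-dilations of two touching Whitney cubes to overlap on a region containing a cube of sidelength at least $(\sigma - 1)\min(\ell(P^{(k)}), \ell(P^{(k+1)}))$, whose $d$-dimensional Lebesgue measure is a dimensional constant times $\min(m(Q_j), m(Q_{j+1}))$, itself comparable to $\max(m(Q_j), m(Q_{j+1}))$ by the previous point.

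The main obstacle is the verification of \eqref{e:BJohn}, specifically the uniform comparison $\ell(Q) \leq C\ell(\tilde Q)$ along the chain: the John lower bound on $\ell(\tilde Q)$ degenerates as $t \to 0$, while the Whitney bound $\ell(P^{(k)}) \geq \ell(P)/4^k$ degrades exponentially with the chain index. The two-regime argument — Whitney near $Q$, John far from $Q$ — is precisely the place where the John geometry of $\X$ genuinely enters, and it is what makes the construction work with $B$ depending only on $\alpha, \beta, d$ and not on the length $N$ of the chain.
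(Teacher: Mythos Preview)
The paper does not supply a proof of this lemma; it simply cites Boman's original argument and, in Remark~\ref{r:constants}, reads off the constants $\A=12^d$, $\C=(4/(\sigma-1))^d$ and $\B=40d\beta/\alpha$ from that reference. Your proposal reconstructs precisely the standard Boman argument: build the chain by following the John curve through Whitney cubes, and verify \eqref{e:BJohn} by combining the John lower bound on $\dist(\gamma(t),\partial\X)$ with the Whitney relation \eqref{e:distwhitney}. The two-regime case split (Whitney comparison for small $t$, John bound for large $t$) is exactly the device that makes $B$ depend only on $\alpha,\beta,d$, and your verification of \eqref{e:conseccubessize}--\eqref{e:conseccubeoverlap} via \eqref{e:whitneylength} and the overlap created by the dilation $\sigma>1$ is correct.

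One small imprecision worth tightening: the phrase ``list the distinct Whitney cubes that $\gamma$ successively enters'' is ambiguous. If you record cubes in order of \emph{first} entry and then delete repeats, consecutive cubes in the resulting list need not touch (the path may go $A\to B\to A\to C$ with $B,C$ disjoint). The standard fix is to first list the cubes with repetitions as $\gamma$ crosses Whitney boundaries, and then perform loop removal (whenever a cube recurs, delete the intervening sub-chain); this yields a chain of distinct cubes in which consecutive ones still touch, and your subsequent analysis applies unchanged to each surviving cube since it is still visited by $\gamma$ at some time $t$. With this clarification, the argument is complete and matches the cited source.
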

In Remark \ref{r:constants} we compute $\A,\B,\C$ in terms of $\alpha,\beta$. Let us mention that the converse of Lemma \ref{l:boman} has been investigated in \cite[Theorem 3.1]{bomanjohn}. Let us mention here that the result \cite[Lemma 2.1]{boman} is proved with a slightly different definition of John domains, but it was shown to be equivalent to Definition \ref{d:john} in \cite[Lemma 2.10]{nakki}. Actually, the initial definition of John domains by John \cite[Page 402]{john} was even another one, but all definitions are equivalent according to the results of \cite[Lemma 2.7]{martio} and \cite[Lemma 2.10]{nakki}.

\subsection{Proof of Theorem \ref{t:mainjohn}}\label{s:johnconclusion}
We assume that $\X$ is a John domain and that $0<m_\rho\leq \rho\leq M_\rho<+\infty$. In order to apply Theorem \ref{t:varineqjohn}, we first verify that its assumptions are verified. Concerning the Boman chain condition, we already checked \eqref{e:A} and \eqref{e:BJohn} in Section \ref{s:bomancubes}. The property \eqref{e:conseccubeoverlap} follows from \eqref{e:whitneylength} and the fact that $Q_j=\sigma P_j$, together with the fact that $0<m_\rho\leq \rho\leq M_\rho<+\infty$. These upper and lower bounds on $\rho$ also imply conditions \eqref{e:ctdoubling} and \eqref{e:supratio}. Therefore we may apply Theorem \ref{t:varineqjohn} in $\X$, i.e., \eqref{e:varineqgeneral} holds. We consider this inequality for $\psi_0=\psi_\mu$ and $\psi_1=\psi_\nu$, which are both continuous in $\mathcal{Y}$.
Since $(\nabla\psi_\mu^*)_{\#}\rho=\mu$ and $(\nabla\psi_\nu^*)_{\#}\rho=\nu$ we obtain
\begin{equation}\label{e:jaiappl}
{\rm Var}_\rho\left(\phi_\mu-\phi_\nu\right) \leq C' \sca{\psi_\mu-\psi_\nu}{\nu-\mu}.
\end{equation}
Then, one can check that $\psi_\mu-\psi_\nu$ is ${\rm diam}(\X)$-Lipschitz continuous, hence by the Kantorovich-Rubinstein duality formula for the $W_1$ distance (see \cite[Theorem 1.14]{villani}) the following upper bound for the right-hand side in \eqref{e:jaiappl} holds:
\begin{equation}\label{e:kr}
\sca{\psi_\mu-\psi_\nu}{\nu- \mu}\leq {\rm diam}(\X) W_1(\mu,\nu).
\end{equation}
Finally, we observe that $\Var_\rho(\phi_\mu-\phi_\nu)=\|\phi_\nu-\phi_\nu\|_{L^2(\rho)}^2$ since $\int_\X\phi_\mu d\rho = \int_\X\phi_\nu d\rho=0$. All in all, putting \eqref{e:jaiappl} and \eqref{e:kr} together, we have obtained \eqref{e:stabpotjohn}, i.e., the stability of Brenier potentials. 

Like for the log-concave case, the stability of Brenier maps \eqref{e:stabmapjohn2} now follows from Proposition \ref{p:GNdelmer}. 
Since $u=\phi_\mu$ and $v=\phi_\nu$ are $R_{\mathcal{Y}}$-Lipschitz and convex in $\R^d$, and the boundary of $\partial \X$ is assumed rectifiable, \eqref{e:stabpotjohn} and Proposition \ref{p:GNdelmer} imply \eqref{e:stabmapjohn2}  since
\begin{align}
M_\rho^{-1/2}\|T_\mu-T_\nu\|_{L^2(\rho)}&\leq\|T_\mu-T_\nu\|_{L^2(\lambda,\X)}\leq C_{\rho,\mathcal{Y}}\|\phi_\mu-\phi_{\nu}\|_{L^2(\lambda,\X)}^{1/3}\nonumber\\
&\leq C_{\rho,\mathcal{Y}}m_{\rho}^{-1/6}\|\phi_\mu-\phi_{\nu}\|_{L^2(\rho)}^{1/3}\leq C_{\rho,\mathcal{Y}}m_{\rho}^{-1/6}W_1(\mu,\nu)^{1/6}.\label{e:chainGN}
\end{align}

\begin{remark}\label{r:poincaresob}
As already mentioned, our proof of Lemma \ref{l:gluevarjohn} is inspired by techniques developed in \cite{bojarski} (building upon anterior work  \cite{boman}) to prove Poincaré-Sobolev inequalities in John domains. In both cases, a global variance is controlled by a sum of local variances.

For Sobolev-Poincaré inequalities, it is known \cite[Theorem 1.1]{buckley} that the property for a domain $\X\subset \R^d$ with finite volume to support Sobolev-Poincaré inequalities is almost equivalent to $\X$ being a John domain. The two properties are truly equivalent if $\X$ is assumed in addition to satisfy a separation condition described in \cite{buckley}, which is verified for instance for simply connected planar domains. Analogously, Theorem \ref{t:counterexample} hints towards an equivalence for a bounded domain $\X$ between the property that it is John (or some related property) and that the uniform probability density on $\X$ has the potential stability property. However, like for Sobolev-Poincaré inequalities, there are obvious caveats, which explain the need of a separation property for the equivalence to hold: for instance in dimension $d\geq 2$, it is possible to remove a countable number of points to a ball to make it a non-John domain, while the optimal transport potentials and maps with uniform source measure are unchanged (and therefore the stability property is unchanged under this operation).
\end{remark}

\begin{remark}\label{r:constants}
We proved that $\rho$ satisfies the Boman chain condition for some parameters $\A,\B,\C$. Let us briefly explain how to express these parameters in terms of the parameter $\alpha$ of the John domain $\X$ (see Definition \ref{d:john}). For this, we need to dig into Boman's proof of Lemma \ref{l:boman} given in \cite[Lemma 2.1]{boman}. Due to properties of the Whitney decomposition (Lemma \ref{l:whitney}) and the construction of the family of cubes in the Boman chain condition (see Section \ref{s:bomancubes}), we may take $\A=12^d$ and
\begin{equation}\label{e:explicitC}
\C=\left(\frac{4}{\sigma-1}\right)^d \sup_{Q}\frac{M_{\rho_Q}}{m_{\rho_Q}}
\end{equation}
which is finite because bounded above by $\left(\frac{4}{\sigma-1}\right)^d\frac{M_\rho}{m_\rho}$. Both $\A$ and $\C$ are thus independent of $\alpha$. The proof of \cite[Lemma 2.1]{boman} gives $\B=\frac{40d}{\alpha'}$ if we take the same definition of John domains as Boman, i.e., $\dist(\gamma(t),\partial \X)\geq \alpha' |\gamma(t)-x|$ instead of \eqref{e:foralltinellgammat} (and the ratio of $\alpha$ and $\alpha'$ is bounded above and below according to \cite[Lemma 2.10]{nakki}). Also, recall that due to Theorem \ref{t:stability-compact},
\begin{equation}\label{e:explicitCDM}
\CDM=e \frac{M_{\rho_Q}}{m_{\rho_Q}}  R_{\mathcal{Y}} \diam(Q),
\end{equation}
which is uniformly bounded above by $e \frac{M_{\rho}}{m_{\rho}}  R_{\mathcal{Y}} \diam(\X)$. Therefore the constants $C_{\rho,\mathcal{Y}}$ in \eqref{e:stabpotjohn} and \eqref{e:stabmapjohn2} are totally explicit.
\end{remark}


\subsection{Extension to finite unions of John domains}
\label{s:moredomains}
In this section, we show that optimal transport maps are still stable if one replaces in Theorem \ref{t:mainjohn} the assumption that $\X$ is a John domain by the assumption that $\X$ is a finite union of John domains. On the side of Brenier potentials, we observed in Section \ref{s:uniqueness} that they are not unique as soon as $\X$ has at least two connected components separated by a positive distance, but the proof of the proposition below shows that stability of Brenier potentials can be recovered by shifting them by an appropriate constant on each connected component, see \eqref{e:rugby} below.
\begin{proposition}
Let $\X=J_1\cup \ldots \cup J_k\subset\R^d$ be a finite union of John domains $J_i$ whose closures pairwise do not intersect, and with rectifiable boundaries. Let $\rho$ be a probability density on $\X$, bounded from above and below by positive constants. Then, for any compact set $\mathcal{Y}$, there exists $C_{\rho,\mathcal{Y}}>0$ such that for any probability measures $\mu,\nu$ supported in $\mathcal{Y}$,
\begin{equation}\label{e:stabmapjohn2finiteunion}
\|T_\mu-T_\nu\|_{L^2(\rho)}\leq C_{\rho,\mathcal{Y}}W_1(\mu,\nu)^{\frac16}.
\end{equation} 
\end{proposition}
\begin{proof}
Denote by $T_\mu=\nabla \phi_\mu$ and $T_\nu=\nabla\phi_\nu$ the optimal transports from $\rho$ to $\mu,\nu$. The functions $\phi_\mu$ and $\phi_\nu$ are convex according to Brenier's theorem, but in this proof (and only here) we do not assume $\int_\X\phi_\mu d\rho=\int_\X\phi_\nu d\rho=0$.

We consider $\rho_i$ the restriction of $\rho$ to $J_i$. We denote by $\tilde{\rho}_i=\rho_i/\rho_i(J_i)$ the associated probability density. According to Theorem \ref{t:varineqjohn}, for any $\psi_0,\psi_1\in\mathcal{C}^0(\mathcal{Y})$, 
$$
{\rm Var}_{\tilde{\rho}_i} (\psi_1^*-\psi_0^*)\leq C_{\rho,\mathcal{Y}}\langle \psi_1-\psi_0\mid \nabla \psi_{0\#}^*\tilde{\rho}_i- \nabla \psi_{1\#}^*\tilde{\rho}_i\rangle
$$
Applying this inequality to $\psi_0=\phi_\mu^*$ and $\psi_1=\phi_\nu^*$, and multiplying by $\rho_i(J_i)$ on both sides, we obtain
$$
\rho_i(J_i){\rm Var}_{\tilde{\rho}_i} (\phi_\mu-\phi_\nu)\leq C_{\rho,\mathcal{Y}}\langle \phi_\nu^*-\phi_\mu^*\mid \nabla \phi_{\mu\#}\rho_i- \nabla \phi_{\nu\#}\rho_i\rangle.
$$
Let $c_i=\int_\X (\phi_\mu - \phi_\nu)d\tilde{\rho}_i$. Then summing the above inequality over $i$ we deduce 
\begin{equation}\label{e:rugby}
\sum_{i=1}^k \|\phi_\mu-\phi_\nu-c_i\|_{L^2(\rho_i)}^2  \leq C_{\rho,\mathcal{Y}}\langle \phi_\nu^*-\phi_\mu^*\mid \nabla \phi_{\mu\#}\rho- \nabla \phi_{\nu\#}\rho\rangle \leq  C_{\rho,\mathcal{Y}}W_1(\mu,\nu).
\end{equation}
where in the last inequality we used Kantorovich-Rubinstein duality.
To get stability of maps we apply Proposition \ref{p:GNdelmer} in $J_i$, which gives
$$
\|T_\mu-T_\nu\|_{L^2(\rho_i)}^2 \leq C_{\rho,\mathcal{Y}}\|\phi_\mu-\phi_\nu-c_i\|_{L^2(\rho_i)}^{2/3}.
$$
Summing over $i$ we get
\begin{equation}\label{e:nwzel}
\|T_\mu-T_\nu\|_{L^2(\rho)}^2 \leq C_{\rho,\mathcal{Y}}\sum_{i=1}^k\|\phi_\mu-\phi_\nu-c_i\|_{L^2(\rho_i)}^{2/3}\leq C_{\rho,\mathcal{Y},k}\Bigl(\sum_{i=1}^k\|\phi_\mu-\phi_\nu-c_i\|_{L^2(\rho_i)}^{2}\Bigr)^{1/3}
\end{equation}
where  we used $\sum_{i=1}^k a_i \leq k^{2/3}\Bigl(\sum_{i=1}^k a_i^3\Bigr)^{1/3}$ for any $a_i\geq 0$, which is a consequence of Hölder's inequality. Combining \eqref{e:rugby} and \eqref{e:nwzel} we obtain \eqref{e:stabmapjohn2finiteunion}.
\end{proof}

\begin{remark} 
It is explained in \cite[Section 3.5]{letrouit} that similar stability bounds still hold if the source $\rho$ is bounded above and below on a domain with an \emph{infinite} number of connected components, as soon as the measure of the connected components decays sufficiently quickly (when they are non-increasingly ordered).
\end{remark}

\subsection{Proof of Theorem \ref{t:explosebord}}\label{s:stabpotexpl}

The proof of Theorem \ref{t:explosebord} follows broadly the same lines as that of Theorem \ref{t:mainjohn}. Again we first need to check that the assumptions of Theorem \ref{t:varineqjohn} are verified. 

Let us observe that due to \eqref{e:encadrement} and \eqref{e:bomanloin}, there exists $\E>0$ such that if $Q\in\mathcal{F}$ and $x,y\in Q$, then $\E^{-1}\rho(y)\leq \rho(x)\leq \E\rho(y)$. This proves \eqref{e:supratio}. Now let us check the Boman chain condition.
The above observation also implies that if $Q,Q'$ intersect, then  $\E^{-2}\rho(y)\leq \rho(x) \leq \E^2\rho(y)$ for any $x\in Q$ and $y\in Q'$. Therefore, denoting by $\lambda$ the Lebesgue measure in $\R^d$,
\begin{equation*}
\E^{-2}\frac{\lambda(Q)}{\lambda(Q')}\leq \frac{\rho(Q)}{\rho(Q')}\leq  \E^2\frac{\lambda(Q)}{\lambda(Q')}
\end{equation*}
and
\begin{equation*}
\frac{\rho(Q\cap Q')}{\max(\rho(Q),\rho(Q'))}\geq \E^{-2}\frac{\lambda(Q\cap Q')}{\max(\lambda(Q),\lambda(Q'))}.
\end{equation*}
Now \eqref{e:conseccubeoverlap} can be deduced as follows. Recall that each Boman cube is obtained as $\sigma P$ where $P$ is a cube of the Whitney decomposition (see Section \ref{s:bomancubes}). Moreover, any two consecutive cubes in the Boman chains used to prove Lemma \ref{l:boman} are scalings of Whitney cubes sharing part of their boundary. One can see this directly from the proof in \cite[Lemma 2.1]{boman}, but this property also follows from the fact that if two Boman cubes $Q=\sigma P$ and $Q'=\sigma P'$ have non-empty intersection, then the original cubes $P$ and $P'$ have touching boundaries.  The latter fact can be proved by contradiction: if the boundaries of $P$ and $P'$ do not touch, assume without loss of generality $\ell(P)\geq \ell(P')$; then all Whitney cubes sharing part of their boundary with $P$ are of sidelength $\geq \ell(P)/4$ due to \eqref{e:whitneylength} (and they are different from $P'$), hence there exists at least one coordinate axis along which $P$ and $P'$ are separated by at least $\ell(P)/4$. The projections of the scalings $Q$ and $Q'$ along this coordinate axis are also at strictly positive distance (since $\sigma \leq 10/9$), which shows that $Q\cap Q'=\emptyset$. Using again \eqref{e:whitneylength}, we deduce that \eqref{e:conseccubeoverlap}  holds.  The other two conditions \eqref{e:A} and \eqref{e:BJohn} of the Boman chain condition have already been checked in Section \ref{s:bomancubes}. There remains to prove that \eqref{e:ctdoubling} holds:
\begin{lemma}\label{l:dddd}
The doubling condition  \eqref{e:ctdoubling} holds for some $\D<+\infty$.
\end{lemma}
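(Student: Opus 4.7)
The plan is to combine the two-sided bound $\rho(Q)\asymp\ell(Q)^{d+\delta}$ for every Boman cube $Q\in\mathcal F$ with an upper bound of the same order on $\rho(\ct Q)$. By \eqref{e:bomanloin} and the triangle inequality, any point $x\in Q$ satisfies $\tfrac12\sqrt d\,\ell(Q)\le \dist(x,\partial\X)\le 6\sqrt d\,\ell(Q)$, so \eqref{e:encadrement} gives $\rho(x)\asymp\ell(Q)^\delta$ uniformly in $x\in Q$; integrating yields $\rho(Q)\asymp\ell(Q)^{d+\delta}$, with constants depending only on $c_1,c_2,d,\delta$.

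For the upper bound on $\rho(\ct Q)$, I would split according to the sign of $\delta$. If $\delta\ge 0$, then every $x\in\ct Q\cap\X$ lies at distance at most $C_{d,B}\,\ell(Q)$ from $\partial\X$ by \eqref{e:bomanloin}, so $\rho\le c_2(C_{d,B}\ell(Q))^\delta$ on $\ct Q\cap\X$; combined with the volume bound $\lambda(\ct Q)\le \ct^{\,d}\ell(Q)^d$ this gives $\rho(\ct Q)\le C\ell(Q)^{d+\delta}$, and dividing by the lower bound on $\rho(Q)$ yields the doubling condition.

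The delicate case is $-1<\delta<0$, where $\rho$ is singular on $\partial\X$. Here I would invoke the Lipschitz regularity of $\partial\X$: cover an open neighbourhood $U$ of $\partial\X$ in $\R^d$ by finitely many balls in each of which, after a rigid motion, $\partial\X$ is the graph of an $L$-Lipschitz function $\varphi$ and $\X$ is its epigraph. In these local coordinates, $\dist(x,\partial\X)$ is comparable to $x_d-\varphi(x_1,\ldots,x_{d-1})$, so by Fubini in the last variable and the assumption $\delta>-1$,
\[
\int_{P\cap\X}\dist(x,\partial\X)^{\delta}\,dx \le C_{L,d,\delta}\,\ell(P)^{d+\delta}
\]
for every cube $P$ sitting inside one such chart. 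Boundedness of $\X$ reduces the general case to this one: cubes $\ct Q$ that stay at positive distance from $\partial\X$ are harmless since $\dist(\cdot,\partial\X)^\delta$ is then bounded above on them, while the remaining $\ct Q$ intersect $U$ and can be covered by a bounded number (depending on $\X$, $B$ and $d$) of cubes each sitting inside one chart, to which the local estimate applies. Combined with the first step this yields $\rho(\ct Q)/\rho(Q)\le D$, proving \eqref{e:ctdoubling}.

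The main obstacle is the local integral estimate in the previous paragraph; the assumption that $\X$ is a bounded Lipschitz domain enters precisely here, and is what allows the singular weight $\dist(\cdot,\partial\X)^\delta$ to satisfy a cube-doubling property when $-1<\delta<0$. For $\delta\ge 0$ the argument is essentially trivial and only uses boundedness of $\X$ and properties of the Whitney decomposition.
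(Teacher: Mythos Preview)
Your proposal is correct and follows essentially the same route as the paper. Both arguments establish $\rho(Q)\asymp\ell(Q)^{d+\delta}$ from \eqref{e:bomanloin} and \eqref{e:encadrement}, treat the case $\delta\ge 0$ by the trivial pointwise bound on $\dist(\cdot,\partial\X)^\delta$ over $\ct Q$, and for $-1<\delta<0$ localize near $\partial\X$ using the Lipschitz regularity and integrate the singular weight in the ``normal'' direction via Fubini. The paper phrases the localization through bi-Lipschitz boundary charts $\Phi:U\to\R^+\times\R^{d-1}$ and observes that for $\varepsilon$ small enough each $rQ$ with $Q\in\mathcal F_\varepsilon$ sits inside a single chart, whereas you use the graph representation and cover $rQ$ by a bounded number of sub-cubes each lying in one chart; these are equivalent bookkeeping choices. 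One small point worth making explicit in your write-up: the covering sub-cubes must be chosen with sidelength comparable to $\ell(Q)$ (not of fixed size), so that the local bound $\int_{P\cap\X}\dist(x,\partial\X)^\delta\,dx\le C\ell(P)^{d+\delta}$ yields the right power of $\ell(Q)$ after summing.
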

\begin{proof} Let us set $r=\ct$.
We first assume $\delta\geq 0$. For any $x\in Q$ and $y\in rQ\cap\X$, there holds $\dist(y,\partial\X)\leq \dist(x,\partial\X)+r\sqrt{d}\ell(Q)\leq (1+2r)\dist(x,\partial\X)$ according to \eqref{e:bomanloin}. Hence \eqref{e:encadrement} yields
$$
\rho(y)\leq c_2 d(y,\partial \X)^{\delta}\leq c_2(1+2r)^{\delta}d(x,\partial\X)^{\delta}\leq c_1^{-1}c_2(1+2r)^{\delta}\rho(x).
$$
We deduce that $\rho(rQ)\leq c_1^{-1}c_2(1+2r)^{\delta}r^d \rho(Q)$.

We turn to the case $\delta\in (-1,0)$. Recall that $\X$ is a bounded Lipschitz domain, hence some neighborhood of its boundary may be covered by a finite number of bi-Lipschitz charts. For $\varepsilon>0$ we set
$$
\X_\varepsilon=\{x\in\X \mid d(x,\partial\X)\leq \varepsilon\}
$$ 
and we denote by $\mathcal{F}_\varepsilon$ the set of elements $Q$ of $\mathcal{F}$ such that $rQ\subset \X_\varepsilon\cup \mathcal{X}^c$. By \eqref{e:bomanloin}, for any $\varepsilon>0$ there exists $\varepsilon'$ such that all cubes $Q\in\mathcal{F}$ whose center is at distance $\leq \varepsilon'$ from $\partial \X$ are in $\mathcal{F}_\varepsilon$. According to \eqref{e:bomanloin}, there is a uniform lower bound on the size of all cubes of the Whitney decomposition whose center is at distance $>\varepsilon'$ from $\partial\X$. Since the Whitney cubes have disjoint interiors, we deduce that the set $\mathcal{F}\setminus\mathcal{F}_\varepsilon$ is finite for any $\varepsilon>0$. Therefore we need to check \eqref{e:ctdoubling} only for elements of $\mathcal{F}_\varepsilon$.

We choose $\varepsilon$ sufficiently small so that $\mathcal{X}_\varepsilon$ is covered by bi-Lipschitz charts. Let us fix such a bi-Lipschitz chart $\Phi:U\rightarrow \R^+\times\R^{d-1}$, $\Phi(\partial \X\cap U)\subset \{0\}\times\R^{d-1}$ where $U\subset \overline{\X}$. Due to \eqref{e:bomanloin}, we deduce that there exists $C_{\rho,r}>0$ (depending only on $\rho$ and $r$, not on $Q$) such that for any cube $Q\in\mathcal{F}_\varepsilon$, if $\eta=\dist(Q,\partial \X)$, then 
$$
\Phi(rQ\cap \X)\subset \underbrace{[0,b_1]\times [a_2,b_2]\times\ldots\times [a_d,b_d]}_{:=R_Q}\subset \R^+\times\R^{d-1}
$$
for some $a_1=0,a_2,\ldots,a_d,b_1,\ldots,b_d$ verifying $|b_i-a_i|\leq C_{\rho,r}\eta$ for any $i\in[d]$. 
There holds
\begin{equation}\label{e:rrrQ}
    \rho(rQ)=(\Phi_{\#}\rho)(\Phi(rQ\cap\X)) \leq (\Phi_{\#}\rho)(R_Q).
\end{equation} 
Now since $\Phi$ is bi-Lipschitz, it follows from \eqref{e:encadrement} that there exist $c_1',c_2'>0$ such that for any $y=(y_1,y')\in \Phi(rQ\cap\X)\subset \R^+\times\R^{d-1}$,
\begin{equation}\label{e:encadrementchart}
c_1'y_1^\delta\leq (\Phi_{\#}\rho)(y)\leq c_2'y_1^\delta.
\end{equation}
Integrating over $R_Q$ we obtain that the right-hand side in \eqref{e:rrrQ} is $\leq C_{\rho,r}\eta^{d+\delta}$. Finally we observe that $\rho(Q)\geq C_{\rho,c_1}\eta^{d+\delta}$, again due to \eqref{e:bomanloin} and \eqref{e:encadrementchart}. This concludes the proof for $\delta\in (-1,0)$.
\end{proof}

All in all, Theorem \ref{t:varineqjohn} applies, and the stability of Brenier potentials \eqref{e:explstabpoto} follows as in Section \ref{s:johnconclusion} by Kantorovich-Rubinstein duality \eqref{e:kr}.

Concerning the stability of Brenier maps \eqref{e:1-8C}, the argument provided in \eqref{e:chainGN} must be modified in the present setting since $M_\rho=0$ when $\delta>0$, and $m_\rho=+\infty$ when $-1<\delta<0$. Let us show that for some $C_\rho>0$,
\begin{equation}\label{e:erosionblowup}
\forall \varepsilon>0, \qquad \rho(\X_\varepsilon)\leq C_\rho\varepsilon^{1+\delta}.
\end{equation}
Up to changing the constant $C_\rho$, it is sufficient to prove \eqref{e:erosionblowup} for $\varepsilon>0$ small enough.
Since $\X$ is assumed to be a Lipschitz domain, we may take as in Lemma \ref{l:dddd} a finite number of bi-Lipschitz local charts covering $\X_\varepsilon$. For each such bi-Lipschitz local chart $\Phi:U\rightarrow \R^+\times \R^{d-1}$, \eqref{e:encadrementchart} holds for any $y=(y_1,y')\in \Phi(U)$, and therefore integrating over $(y_1,y')\in \Phi(U\cap \X_\varepsilon)$  and using that the number of charts is finite, we obtain \eqref{e:erosionblowup}.
Let us now consider the density $\rho_\varepsilon=\rho_{|\X\setminus\X_\varepsilon}+\rho(x_0) \chi_{\X_{\varepsilon}}$ where $x_0\in\X\setminus \partial \X$ is fixed (and is independent of $\varepsilon$). Then $\rho_\varepsilon$ is not a probability measure, but we will use that its support is the closure of $\X$, independently of $\varepsilon$. Using $\|T_\mu\|_{L^\infty}\leq R_{\mathcal{Y}}$ and $\|T_\nu\|_{L^\infty}\leq R_{\mathcal{Y}}$ (recall that $R_{\mathcal{Y}}$ has been introduced in \eqref{e:RY}), \eqref{e:erosionblowup} implies
\begin{equation}\label{e:decoupe}
\|T_\mu-T_\nu\|_{L^2(\rho)}^2\leq C_{\rho,\mathcal{Y}}\varepsilon^{1+\delta}+\|T_\mu-T_\nu\|_{L^2(\rho_\varepsilon)}^2
\end{equation} 

We now prove an upper bound on $\|T_\mu-T_\nu\|_{L^2(\rho_\varepsilon)}^2$. We first consider the case $\delta\geq 0$. By the same computation as in \eqref{e:chainGN},
\begin{align}
\|T_{\mu}-T_{\nu}\|^3_{L^2(\rho_\varepsilon)}&\leq C_{\rho,\mathcal{Y}}\mathcal{H}^{d-1}(\partial \X)m_{\rho_\varepsilon}^{-1/2}\|\phi_\mu-\phi_\nu\|_{L^2(\rho_\varepsilon)}\nonumber\\
&\leq C_{\rho,\mathcal{Y}}\mathcal{H}^{d-1}(\partial \X)m_{\rho_\varepsilon}^{-1/2}(\|\phi_\mu-\phi_\nu\|_{L^2(\rho)}+\varepsilon^{1+\delta})\nonumber\\
&\leq C_{\rho,\mathcal{Y}}\mathcal{H}^{d-1}(\partial \X)m_{\rho_\varepsilon}^{-1/2}(W_1(\mu,\nu)^{1/2}+ \varepsilon^{1+\delta})\label{e:lecube}
\end{align}
To go from first to second line we used \eqref{e:erosionblowup} together with the observation that $\|\phi_\mu-\phi_\nu\|_{L^\infty(\X)}\leq 2R_{\mathcal{Y}}{\rm diam}(\X)$ since $\phi_\mu,\phi_\nu$ are $R_{\mathcal{Y}}$-Lipschitz and $\phi_\mu-\phi_\nu$ vanishes on $\X$ at least at one point due to the fact that $\int_\X \phi_\mu d\rho=\int_\X\phi_\nu d\rho=0$. We estimate the right-hand side of \eqref{e:lecube}: due to \eqref{e:encadrement} there holds 
$$
m_{\rho_\varepsilon}=\min(\rho_{|\X\setminus\X_\varepsilon},\rho(x_0))\geq \min(c_1\varepsilon^\delta, \rho(x_0)) \geq C_\rho \varepsilon^\delta,
$$
for some constant $C_\rho$ which depends on $\rho$ but not on $\varepsilon$ (for the last inequality we use that $\X$ is bounded, so $\varepsilon$ is bounded above). Hence
$m_{\rho_\varepsilon}^{-1/2}\leq C_\rho\varepsilon^{- \delta/2}$.
Plugging into \eqref{e:lecube} and then \eqref{e:decoupe}, we get
$$
\|T_{\mu}-T_{\nu}\|_{L^2(\rho)}^2\leq C_{\rho,\mathcal{Y}}\varepsilon^{1+\delta}+C_{\rho,\mathcal{Y}}\varepsilon^{-\delta/3}(W_1(\mu,\nu)^{1/2}+\varepsilon^{1+\delta})^{2/3}
$$
Choosing $\varepsilon=W_1(\mu,\nu)^{\frac{1}{2(1+\delta)}}$, we obtain \eqref{e:1-8C}. 

Finally, we consider the case $\delta\leq 0$. Then 
\begin{align}
\|T_{\mu}-T_{\nu}\|^3_{L^2(\rho_\varepsilon)}&\leq C_{\rho,\mathcal{Y}}\mathcal{H}^{d-1}(\partial \X)M_{\rho_\varepsilon}^{3/2}\|\phi_\mu-\phi_\nu\|_{L^2(\rho_\varepsilon)}\nonumber\\
&\leq C_{\rho,\mathcal{Y}}\mathcal{H}^{d-1}(\partial \X)M_{\rho_\varepsilon}^{3/2}\|\phi_\mu-\phi_\nu\|_{L^2(\rho)}\nonumber\\
&\leq C_{\rho,\mathcal{Y}}\mathcal{H}^{d-1}(\partial \X)M_{\rho_\varepsilon}^{3/2}W_1(\mu,\nu)^{1/2}\label{e:lecube2}
\end{align}
Proceeding as above with $M_{\rho_\varepsilon}\leq C_\rho \varepsilon^\delta$ and choosing $\varepsilon=W_1(\mu,\nu)^{1/3}$, we obtain \eqref{e:1-8C}.

\subsection{Proof of Theorem \ref{t:hallin}}\label{s:hallinmap}
The proof of Theorem \ref{t:hallin} follows the same lines as that of Theorem \ref{t:explosebord}. Again we check that the assumptions of Theorem \ref{t:varineqjohn} are satisfied. For \eqref{e:supratio} and the Boman chain condition, the argument is exactly the same as at the beginning of Section \ref{s:stabpotexpl}, we do not repeat it here. We only need to verify the doubling condition \eqref{e:ctdoubling}. 
\begin{lemma}
The doubling condition \eqref{e:ctdoubling} is verified for some $D<+\infty$.
\end{lemma}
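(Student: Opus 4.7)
The plan is to exploit the fact that $\rho(x) = c_d |x|^{1-d}$ depends only on $|x|$, together with the Whitney sidelength bound \eqref{e:bomanloin} applied to the domain $\X = B(0,1)\setminus\{0\}$, whose complement $\X^c = \{0\} \cup (\R^d \setminus B(0,1))$ has the origin as one of its ``boundary points''. In particular, for every Boman cube $Q \in \mathcal{F}$ of sidelength $\ell=\ell(Q)$, one has $\dist(Q,0) \geq \dist(Q,\X^c) \geq \tfrac12 \sqrt{d}\,\ell$, so every $x\in Q$ satisfies $|x|\geq \tfrac12\sqrt{d}\,\ell$. Set $\kappa = \ct = 5B\sqrt{d}$; the goal is to bound $\rho(\kappa Q)/\rho(Q)$ uniformly in $Q$.

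The idea is to split into two regimes according to how close $Q$ is to the origin compared to its sidelength. In the \emph{far regime} $\dist(Q,0) \geq 2\kappa\sqrt{d}\,\ell$, any $x \in \kappa Q$ satisfies $|x| \in [\tfrac12 \dist(Q,0),\, 2\dist(Q,0)]$ by the triangle inequality, so $\rho$ is comparable to $\dist(Q,0)^{1-d}$ on both $Q$ and $\kappa Q$ up to constants depending only on $d$ and $\kappa$; then
\[
\frac{\rho(\kappa Q)}{\rho(Q)} \lesssim \frac{|\kappa Q|}{|Q|} = \kappa^d.
\]
In the \emph{close regime} $\dist(Q,0) \leq 2\kappa\sqrt{d}\,\ell$, one checks that $\kappa Q \subset B(0, C_{d,B}\,\ell)$, so using the explicit integral $\int_{B(0,t)}|x|^{1-d}\,dx = \omega_{d-1} t$ (where $\omega_{d-1}$ is the area of the unit sphere) one obtains $\rho(\kappa Q) \leq c_d \omega_{d-1} C_{d,B}\,\ell$. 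On the other hand, $|x| \leq (2\kappa + 1)\sqrt{d}\,\ell$ on $Q$, which gives the lower bound $\rho(Q) \geq c_d ((2\kappa+1)\sqrt{d}\,\ell)^{1-d} \ell^d = C'_{d,B}\,\ell$, and the doubling ratio is again bounded by a constant depending only on $d$ and $B$.

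Combining the two regimes yields \eqref{e:ctdoubling} with a finite constant $D = D(d, B)$. The only mildly delicate point is the close regime, where one must resist the temptation to bound $\rho$ by its pointwise maximum on $\kappa Q$ (which is infinite), and instead integrate $|x|^{1-d}$ explicitly against Lebesgue measure on a ball centered at the origin; all other estimates are direct consequences of \eqref{e:bomanloin} and the triangle inequality.
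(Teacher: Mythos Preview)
Your proof is correct and follows essentially the same strategy as the paper: use the Whitney bound \eqref{e:bomanloin} to control $\dist(Q,0)$ in terms of $\ell(Q)$, then compare $\rho(\kappa Q)$ to $\rho(Q)$ via the explicit radial integral $\int_{B(0,t)}|x|^{1-d}\,dx=\omega_{d-1}t$.

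The one noteworthy difference is your explicit far/close dichotomy. The paper works throughout with the single scale $\eta=\dist(Q,0)$ and asserts $\rho(Q)\geq C_d\,\ell(Q)^d(3\eta)^{1-d}\geq C_d\,\eta$, which tacitly uses $\ell(Q)\gtrsim\eta$. That comparability follows from \eqref{e:bomanloin} only when the origin is the nearest point of $\X^c$; for Whitney cubes lying close to the outer sphere $\partial B(0,1)$ one can have $\ell(Q)\ll\eta\approx 1$, and the paper's displayed lower bound $\rho(Q)\geq C_d\eta$ fails as stated (though the doubling itself is trivial there since $\rho$ is bounded above and below near the outer boundary). Your far regime handles precisely this situation: when $\dist(Q,0)\geq 2\kappa\sqrt{d}\,\ell$, the density is pointwise comparable on $Q$ and on $\kappa Q\cap\X$, so the ratio reduces to the Lebesgue volume ratio $\kappa^d$, with no need for $\ell\sim\eta$. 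The close regime then coincides with the paper's computation. So your argument is a cleaner version of the paper's, and in particular it closes a small gap in the paper's presentation. One cosmetic point: in the far regime you should say ``$\rho$ is comparable to $\dist(Q,0)^{1-d}$ on $Q$ and on $\kappa Q\cap\X$'' rather than on all of $\kappa Q$, since $\kappa Q$ may protrude outside $B(0,1)$; this does not affect the upper bound on $\rho(\kappa Q)$.
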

\begin{proof}
Let $r=\ct$. 
Let $Q\in\mathcal{F}$ and let $\eta=\dist(Q,0)$ denote its distance to the origin. 
Then $Q\subset B(0,3\eta)$ since ${\rm diam}(Q)\leq \sqrt{d}\ell(Q)\leq 2\eta$ according to \eqref{e:bomanloin}. Hence $rQ\cap \X \subset B(0,4r\eta)\cap \X$ where $\X=B(0,1)$. Therefore $\rho(rQ\cap \X)\leq \rho(B(0,4r\eta)\cap\X)=\min(4r\eta,1)$. On the other hand, 
$$
\rho(Q)\geq C_d\ell(Q)^d(3\eta)^{1-d}\geq C_d\eta^d \eta^{1-d}= C_d \eta
$$
again due to the fact that $Q\subset B(0,3\eta)$ and then \eqref{e:bomanloin}. This proves that the doubling condition is verified. 
\end{proof}

Therefore Theorem \ref{t:varineqjohn} applies, and the stability of Brenier potentials \eqref{e:hallinpot} follows via Kantorovich-Rubinstein duality \eqref{e:kr} as in Section \ref{s:johnconclusion}.

For the stability of maps \eqref{e:hallinmap}, we set $K_\varepsilon=B(0,1)\setminus B(0,\varepsilon)$ and $\rho_{\varepsilon}=\rho_{|K_\varepsilon}$. We notice that $M_{\rho_\varepsilon}=c_d\varepsilon^{1-d}$ and that $\mathcal{H}^{d-1}(\partial K_\varepsilon)\leq C_d$ uniformly in $\varepsilon$. Applying Proposition \ref{p:GNdelmer} as in \eqref{e:chainGN} we obtain
\begin{align*}
\|T_\mu-T_\nu\|_{L^2(\rho_\varepsilon)}^2\leq M_{\rho_\varepsilon}\|T_\mu-T_\nu\|_{L^2(\lambda,K_\varepsilon)}^2 
&\leq C_{d,\mathcal{Y}}\mathcal{H}^{d-1}(\partial K_\varepsilon)^{2/3}M_{\rho_\varepsilon}\|\phi_\mu-\phi_\nu\|_{L^2(\lambda,K_\varepsilon)}^{2/3}\\
&\leq C_{d,\mathcal{Y}}M_{\rho_\varepsilon}\|\phi_\mu-\phi_\nu\|_{L^2(\rho)}^{2/3}\\
&\leq C_{d,\mathcal{Y}} \varepsilon^{1-d} W_1(\mu,\nu)^{1/3}.
\end{align*}
Finally, since $\rho(B(0,\varepsilon))=\varepsilon$, $\|T_\mu\|_{L^\infty}\leq R_{\mathcal{Y}}$ and $\|T_\nu\|_{L^\infty}\leq R_{\mathcal{Y}}$, we obtain
$$
\|T_\mu-T_\nu\|_{L^2(\rho)}^2\leq \|T_\mu-T_\nu\|_{L^2(\rho_\varepsilon)}^2+C_{\mathcal{Y}}\varepsilon\leq C_{d,\mathcal{Y}}(\varepsilon^{1-d} W_1(\mu,\nu)^{1/3}+\varepsilon).
$$
Choosing $\varepsilon=W_1(\mu,\nu)^{1/3d}$, we get \eqref{e:hallinmap}.

\begin{remark}\label{r:expl}
The assumptions of Theorems \ref{t:explosebord} and \ref{t:hallin} are the simplest ones which guarantee: (i) the Boman chain property \eqref{e:conseccubeoverlap}; (ii) the boundedness of the maximal operator $M$ defined in Appendix \ref{a:maximal}; (iii) property \eqref{e:erosionblowup}; (iv) an estimate of $m_{\rho_\varepsilon}$ and/or $M_{\rho_\varepsilon}$ as a power of $\varepsilon$. Theorem \ref{t:explosebord} would hold for any other family of $\rho$'s for which the above four ingredients can be proved.
\end{remark}

\section{Proof of Theorem \ref{t:counterexample}: counterexample to H\"older potential stability}\label{s:counterexample}

We first prove the result for $d=2$. The coordinates in $\R^2$ are denoted by $x_1,x_2$. We recall the concept of room and passage domain (see \cite{amick}, \cite[pp. 521-523]{couranthilbert} for instance). Figure \ref{fig:room} illustrates our description. Take a sequence of rooms (i.e., rectangles) $R_j$, contained in the unit ball of $\R^2$, $j\in\N^*$, $R_j$ symmetric with respect to the $x_1$-axis, and such that $\overline{R_j}\cap \overline{R_k}=\emptyset$ for $j\neq k$. These rooms are labelled as $R_1,R_2,\ldots$ along the increasing $x_1$-axis. They are joined together by passages (i.e., rectangles) $P_n$, $n\in\N^*$, $P_n$ symmetric with respect to the $x_1$-axis, of height $h_n$ much smaller than the height of the adjoining rooms $R_n$ and $R_{n+1}$. In particular the whole room and passage domain is contained in the unit ball. The passages $P_n$ are also ordered as $P_1,P_2,\ldots$ along the increasing $x_1$-axis. The sidelengths and centers of all these rectangles will be specified later.

\begin{figure}[h]
\centering

\begin{tikzpicture}

    \draw[dashed, gray, ->] (-1, 0) -- (10, 0) node[anchor=west] {$x_1$};
    \draw[dashed, gray, ->] (0.75, -3) -- (0.75, 3) node[anchor=south] {$x_2$};

    \draw[thick] (1.5,0.5) -- (1.5,2) -- (0,2) -- (0,-2) -- (1.5,-2) -- (1.5,-0.5) ;

    \draw[thick] (1.5, 0.5) -- (3, 0.5);
    \draw[thick] (1.5, -0.5) -- (3, -0.5);

    \draw[thick] (3,-0.5) -- (3, -1.33) -- (4, -1.33) -- (4, -0.25);
    \draw[thick] (4, 0.25) -- (4,1.33) -- (3,1.33) -- (3,0.5);

    \draw[thick] (4, 0.25) -- (5, 0.25);
    \draw[thick] (4, -0.25) -- (5, -0.25);

    \draw[thick] (5,-0.25) -- (5, -0.88) -- (5.67, -0.88) -- (5.67,-0.125);
    \draw[thick] (5,0.25) -- (5, 0.88) -- (5.67, 0.88) -- (5.67,0.125);

    \draw[thick] (5.67, 0.125) -- (6.33, 0.125);
    \draw[thick] (5.67, -0.125) -- (6.33, -0.125);

    \draw[thick] (6.33, -0.125) -- (6.33, -0.59) -- (6.78,-0.59) -- (6.78, -0.0625);
    \draw[thick] (6.33, 0.125) -- (6.33, 0.59) -- (6.78,0.59) -- (6.78, 0.0625);

    \draw[thick] (6.78, 0.0625) -- (7.22, 0.0625);
    \draw[thick] (6.78, -0.0625) -- (7.22, -0.0625);

    \draw[thick] (7.22, -0.0625) -- (7.22, -0.39) --  (7.52, -0.39) -- (7.52,-0.03);
    \draw[thick] (7.22, 0.0625) -- (7.22, 0.39) --  (7.52, 0.39) -- (7.52,0.03);

    \draw[thick] (7.52, 0.03) rectangle (7.84, 0.03);
    \draw[thick] (7.52, -0.03) rectangle (7.84, -0.03);

    \draw[thick] (7.84, -0.03) -- (7.84, -0.26) -- (8.01,-0.26) -- (8.01, -0.0156);
    \draw[thick] (7.84, 0.03) -- (7.84, 0.26) -- (8.01,0.26) -- (8.01, 0.0156);

    \draw[thick] (8.01, 0.0156) rectangle (8.21, 0.0156);
    \draw[thick] (8.01, -0.0156) rectangle (8.21, -0.0156);

    \draw[thick] (8.21,-0.0156) -- (8.21, -0.175) -- (8.34, -0.175)-- (8.34, -0.0078);
    \draw[thick] (8.21,0.0156) -- (8.21, 0.175) -- (8.34, 0.175)-- (8.34, 0.0078);

    \draw[thick] (8.34, 0.0078) rectangle (8.47, 0.0078);
    \draw[thick] (8.34, -0.0078) rectangle (8.47, -0.0078);

    \draw[thick] (8.47, -0.0078) -- (8.47, -0.117) -- (8.56, -0.117) -- (8.56, -0.0039);
    \draw[thick] (8.47, 0.0078) -- (8.47, 0.117) -- (8.56, 0.117) -- (8.56, 0.0039);

    \draw[thick] (8.56, -0.0039) rectangle (8.65, 0.0039);

    \node at (0.7, 1) {$R_1$};
    
    \node at (2.6, -0.25) {$P_1$};

    \node at (3.45, 1) {$R_2$};
    
    \node at (4.45, 0) {$P_2$};

     \draw (1.5, 0.1) -- (1.5, -0.1);  
    \node at (1.5, 0.3) {$t_1$};
    \draw (3, 0.1) -- (3, -0.1);  
    \node at (3, 0.3) {$t_1'$};
     \draw (4, 0.1) -- (4, -0.1);  
    \node at (3.8, 0.3) {$t_2$};
    \draw (5, 0.1) -- (5, -0.1);  
    \node at (5.2, 0.3) {$t_2'$};

    \draw[<->] (2.2, -0.5) -- (2.2, 0.5) node[midway, left, xshift=+3pt, yshift=-4pt] {$h_1$};

\end{tikzpicture}
\caption{Room and passage domain} \label{fig:room}
\end{figure}
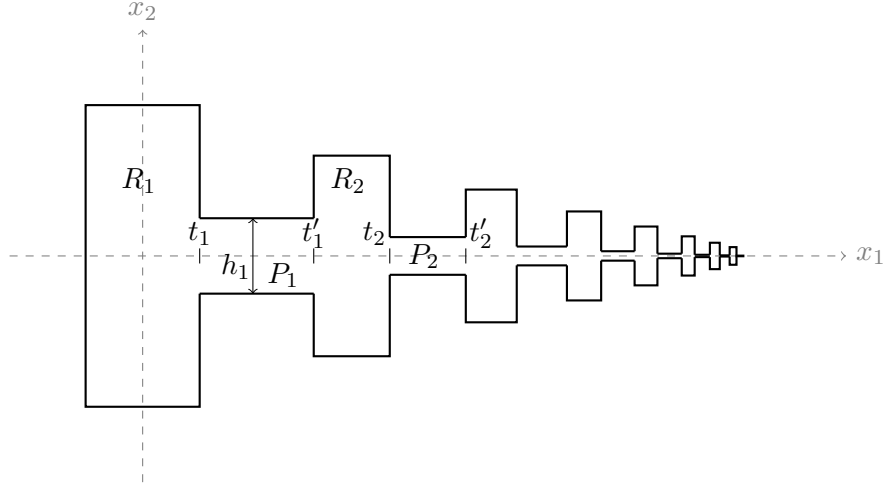

We denote by $t_n$ (resp. $t_n'$) the infimum (resp. the supremum) of the abscissa $x_1$ in $P_n$. In particular, $t_n<t_n'<t_{n+1}$ for any $n\in\N^*$. We fix the parameters $t_n,t_n'$ and the rectangles $R_n$ for $n\in\N^*$. Then, the heights $h_n$ are chosen sufficiently small, in order to satisfy the following property: for any $\delta>0$, 
\begin{equation}\label{e:tre}
\lim_{n\rightarrow +\infty} \frac{h_n^\delta}{|t_n-t_n'|^2\lambda(R_{n+1})}=0
\end{equation}
where $\lambda$ denotes the Lebesgue measure in $\R^d$. We denote by $\X$ the room and passage domain given by the union of all $R_n$'s and $P_n$'s, for $n\geq 1$. Finally, let $\rho$ denote a probability density on $\X$ bounded away from $0$ and $+\infty$, i.e., there exist $m_\rho>0$ and $M_\rho<+\infty$ such that $m_\rho\leq \rho\leq M_\rho$ in $\X$. For instance, one can take $\rho$ to be the normalized Lebesgue measure on the room and passage domain.

Consider the convex functions $\phi_{n}(x)=|x_1-t_n|$ and $\phi_{n}'(x)=|x_1-t_n'|$, which are the respective Brenier potentials for the quadratic transport from $\rho$ to $(\nabla \phi_n)_{\#}\rho$, and from $\rho$ to $(\nabla \phi_n')_{\#}\rho$. For $x=(x_1,x_2)\in\R^2$,
\begin{equation}\label{e:dormeur}
\phi_{n}'(x)-\phi_{n}(x)=\begin{cases} t_n'-t_n \text{ if } x_1\leq t_n \\ t_n-t_n' \text{ if } x_1\geq t_n'\end{cases}
\end{equation}
and 
\begin{equation}\label{e:dormeur2}
|\phi_{n}'(x)-\phi_{n}(x)|\leq |t_n-t_n'| \text{ if } x\in P_n.
\end{equation}
Consequently,
$$
\|\phi_{n}-\phi_{n}'\|_{L^2(\rho)}^2\geq |t_n-t_n'|^2(1-\rho(P_n)).
$$
We set 
$$
v_n=\rho(\{x\in\X \mid x_1\leq t_n\}) \qquad \text{and} \qquad w_n=\rho(\{x\in\X \mid x_1\geq t_n'\}).
$$
We have $v_n\rightarrow 1$, $w_n\rightarrow 0$ and $\rho(P_n)\rightarrow0$ as $n\rightarrow +\infty$. Besides, there holds for any $n\in\N^*$
\begin{equation}\label{e:=1}
v_n+w_n+\rho(P_n)=1.
\end{equation}
We bound the mean of $\phi_{n}'-\phi_{n}$ as follows: for $n$ large enough, according to \eqref{e:dormeur} and \eqref{e:dormeur2},
\begin{align*}
0\leq \int_\X (\phi_{n}'-\phi_{n})d\rho&\leq v_n(t_n'-t_n)+w_n(t_n-t_n')+|t_n'-t_n|\rho(P_n)\\
&=(1-2w_n)(t_n'-t_n)
\end{align*}
where the last equality comes from \eqref{e:=1}. Hence, still for $n$ large enough,
\begin{align}
\Var_\rho(\phi_{n}-\phi_{n}')&=\|\phi_{n}-\phi_{n}'\|_{L^2(\rho)}^2-\left(\int_\X (\phi_{n}-\phi_{n}')d\rho\right)^2\nonumber\\
&\geq (t_n'-t_n)^2(1-\rho(P_n)-(1-2w_n)^2)\nonumber\\
&=(t_n'-t_n)^2(4w_n-\rho(P_n)-4w_n^2)\nonumber\\
&\geq (t_n'-t_n)^2(\rho(R_{n+1})-\rho(P_n))\label{e:varpard}
\end{align}
since $w_n\rightarrow 0$ and $w_n\geq \rho(R_{n+1})$.

Fix $p\in[1,+\infty)$. Let us compute $W_p(\mu_n,\mu_n')$ for $n\in\N^*$. For any $n\in\N^*$, the measures $\mu_n=(\nabla\phi_n)_{\#}\rho$ and $\mu_n'=(\nabla\phi_n')_{\#}\rho$ are both supported on the union of two singletons, namely $A=(x_1,x_2)=(-1,0)$ and $B=(x_1,x_2)=(1,0)$. The subset of $\X$ given by the points $x\in\X$ such that $\nabla\phi_n(x)\neq \nabla\phi_n'(x)$ is exactly $P_n$. Since $\dist(A,B)=2$ we deduce $W_p(\mu_n,\mu_n')=2\rho(P_n)^{1/p}$.

Now we observe that $\rho(P_n)\leq M_\rho\lambda(\X)^{-1}h_n|t_n'-t_n|\leq h_n$ for $n$ large enough where $h_n$ denotes the height of $P_n$ and $\lambda$ denotes the Lebesgue measure in $\R^d$. 
Due to \eqref{e:tre}, $\rho(P_n)\leq h_n \leq \rho(R_{n+1})/2$ for $n$ large enough. Plugging into \eqref{e:varpard} we get that for any $q>0$ and $p\in[1,+\infty)$, for $n$ large enough,
$$
\frac{W_p(\mu_n,\mu_n')^q}{\Var_\rho(\phi_{n}-\phi_{n}')}\leq 2^{q+1}m_\rho\frac{h_n^{q/p}}{|t_n-t_n'|^2\lambda(R_{n+1})}\underset{n\rightarrow+\infty}{\longrightarrow} 0
$$
which concludes the proof when $d=2$. The generalization to any $d\geq 2$ is straightforward, considering hyperrectangles rooms and passages, instead of rectangles. Notice that in our example the target measures $\mu_n, \mu_n'$ are supported in the closure of the unit ball of $\R^d$, but the result actually holds for any fixed non-empty ball $B(0,r)\subset\R^d$ (supporting the target measures), by considering $\phi_n=\frac{1}{2}r|x-t_n|$ and $\phi_n'=\frac{1}{2}r|x-t_n'|$.

\begin{remark}
From the above construction it is straightforward to construct a 1d probability density $\rho$ on $\R$, whose support is a compact interval, and for which the Hölder stability property of Definition \ref{d:holder} does not hold. Let $\pi:\R^2\rightarrow\R$, $\pi(x_1,x_2)=x_1$ be the projection onto the first coordinate. Consider the 1d probability measure $\rho'=\pi_{\#}\rho$ on $\R$, where $\rho$ is the normalized Lebesgue measure on the domain constructed above (and represented in Figure \ref{fig:room}). Then the above computations show that the Brenier potentials associated to $\rho'$ do not satisfy the Hölder potential stability property. However, the Brenier maps with source measure $\rho'$ are stable: even better, they satisfy $\|T_\mu-T_\nu\|_{L^2(\rho')}=W_2(\mu,\nu)$, this equality being a property of all 1d probability measure.
\end{remark}

\begin{remark}\label{e:autrecontreex}
There exist other counterexamples to Hölder potential stability than room and passage domains, for instance domains with a sharp outward cusp: in $\R^2$, consider a domain contained in the half-plane $x_1\geq 0$ of the form $|x_2|\leq e^{-1/x_1^2}$ near $0$, and a probability density $\rho$ on this domain, bounded above and below by positive constants. Then the same family of potentials $|x_1-t|$ as above shows that Hölder potential stability does not hold in this case either. 
\end{remark}


\section{Proof of Theorem \ref{t:powlawdist}: generalized Cauchy distributions}\label{s:proofexception}

We describe in Section \ref{s:warmup} a natural attempt to prove Theorem \ref{t:powlawdist} which, although too rough to lead to a complete proof, gives clear insights about a key difficulty of the problem. Our final proof is an improvement over this attempt. Its broad lines are described at the end of Section \ref{s:warmup}.

\subsection{Warm-up and proof strategy}\label{s:warmup} 
In this section, which is merely illustrative, we assume for simplicity that $\rho(x)=c_\beta \langle x\rangle^{-\beta}$ with $\beta>d+2$, and $c_\beta>0$ is a normalizing constant.
We explain one of the key difficulties of the proof in this simple case. A natural approach would be to apply Theorem \ref{t:stability-compact}, but since it works only in bounded sets, we are naturally led to truncate the source measure $\rho$, as in the log-concave case handled in Section \ref{s:vrailogconc}. For $r\geq 0$, recall that $\Ball_r=B(0,r) \subset \R^d$ denotes the closed Euclidean ball of center $0$ and radius $r$. We denote by $\phi_{\mu,r}$ (resp. $\phi_{\nu,r}$) the restriction of $\phi_\mu$ (resp. $\phi_\nu$) to $\Ball_r$, and we consider the probability measures 
\begin{equation}\label{e:rhormurnur}
\rho_r=\frac{\rho_{|\Ball_r}}{\rho(\Ball_r)}, \qquad \mu_r=(\nabla \phi_{\mu,r})_{\#}\rho_r, \qquad \nu_r=(\nabla \phi_{\nu,r})_{\#}\rho_r.
\end{equation}
Finally we set $\psi_{\mu,r}=\phi_{\mu,r}^*$ and $\psi_{\nu,r}=\phi_{\nu,r}^*$. Simple arguments similar to \eqref{e:varwasssgau} based on the fact that $\phi_{\mu,r}$ and $\phi_{\nu,r}$ are $R_{\mathcal{Y}}$-Lipschitz imply that for large $r$,
$$
{\rm Var}_\rho(\phi_\mu-\phi_\nu)\leq {\rm Var}_{\rho_r}(\phi_{\mu,r}-\phi_{\nu,r})+C_{d,\mathcal{Y}}r^{d+2-\beta}.
$$
Since $\Ball_r$ is a bounded convex domain on which $\rho$ is bounded from above and below, Theorem \ref{t:stability-compact} yields
\begin{equation}\label{e:toonaive}
{\rm Var}_{\rho_r}(\phi_{\mu,r}-\phi_{\nu,r})\leq C_{d,\mathcal{Y}}\ r^{\beta+1}  \sca{\psi_{\mu,r}-\psi_{\nu,r}}{\nu_r- \mu_r}.
\end{equation}
Now we observe that $\psi_{\mu,r}-\psi_{\nu,r}$ is $2r$-Lipschitz (because ${\rm diam}(\Ball_r)=2r$) hence using Kantorovich-Rubinstein duality in the right-hand side of \eqref{e:toonaive} we obtain
\begin{equation}\label{e:CdYW1}
{\rm Var}_\rho(\phi_\mu-\phi_\nu)\leq C_{d,\mathcal{Y}}\ r^{d+2-\beta}+C_{d,\mathcal{Y}} \ r^{\beta+2} W_1(\mu_r,\nu_r).
\end{equation}
There remains to find an upper bound for $W_1(\mu_r,\nu_r)$ in terms of $W_1(\mu,\nu)$. The best we can hope for is that for large $r$,
$$
W_1(\mu_r,\nu_r)\leq W_1(\mu,\nu)+C_{\mathcal{Y}} \ r^{d-\beta}
$$
(due to \eqref{e:W1W1gau}). Plugging into \eqref{e:CdYW1} we see that the remainder term is too large to obtain \eqref{e:stabpotlogconc}, the method needs to be refined.

However, this strategy tells us that a key point is to be able to establish in $\Ball_r$ a variance inequality like \eqref{e:toonaive} with an improved dependence in $r$. This is precisely what we do in our proof of Theorem \ref{t:powlawdist}. It relies on the same basic idea as Theorem \ref{t:mainjohn}, that of decomposing $\X$ (here, $\R^d$) into convex sets $Q\in\mathcal{F}$, and we do this in a way that the ratio $M_{\rho_Q}/m_{\rho_Q}$ of the maximum density over the minimum density in each convex set $Q$ is bounded above uniformly in $Q\in\mathcal{F}$. Then we  apply the variance inequality in each $Q$, and finally we glue together all these local variance inequalities to obtain a global one. 

To do the gluing, it does not seem possible to adapt the strategy used to prove Theorem \ref{t:mainjohn}, namely to apply a Boman chain-type argument in $\Ball_r$ (and then letting $r\rightarrow+\infty$). Therefore, in the sequel, we follow a different strategy, which is based on the construction and the spectral analysis of a graph. We construct a decomposition of $\R^d$ into convex sets, and each of them is seen as one vertex of an infinite graph, endowed with a weight corresponding to its mass. The edges of this graph are given by those pairs of convex sets which intersect, and each edge is endowed with a weight equal to the mass of the intersection of the two convex sets. Then, the gluing argument for variance inequalities is obtained through a Cheeger inequality in finite truncations of the weighted graph that we just described. Using the Cheeger inequality is a robust strategy, with which it is possible for instance to recover Theorem \ref{t:mainjohn} (at least in Lipschitz domains), and this is why we use it here; we briefly explain in Remark \ref{r:sanscheeger} an alternative argument to replace the part of the proof where the Cheeger inequality is used, but which is quite specific to the power law case considered in Theorem \ref{t:powlawdist} and does not seem to generalize well.

\subsection{Laplacians in weighted graphs and Cheeger inequality}\label{s:laplagraphs}
We gather in this section general facts regarding Laplacians in infinite weighted graphs and the associated Cheeger inequality. Our presentation follows closely \cite{cheegerunbounded}.
\subsubsection{Weighted graphs}
Let $V$ be a countable set equipped with the discrete topology. Elements of $V$ are called vertices. We assume that $V$ is endowed with a function $\pds:V\rightarrow (0,\infty)$ which can be turned into a Radon measure on $V$ of full support  by the formula $\pds(U)=\sum_{i\in U}\pds_i$ for $U\subseteq V$ (we denote by $\pds_i$ the value of $\pds$ at $i\in V$). Then let $w:V\times V\rightarrow [0,\infty)$ be a symmetric function vanishing on the diagonal and satisfying
$$
\forall i\in V, \quad \sum_{j\in V}w_{ij}<+\infty
$$
(here an in the sequel, $w_{ij}=w(i,j)$).
We denote by $E$ the set of all edges, i.e., the set of all $(i,j)\in V\times V$ such that $w_{ij}>0$. If $(i,j)\in E$, we say that $i$ and $j$ are neighbors. The weighted graph $G=(V,E,\delta,w)$ is said locally finite if each vertex has only finitely many neighbors. 

\subsubsection{Graph Laplacians}\label{s:graphLaplacian}
Denote by $C_c(V)$ the space of real valued functions on $V$ with finite support.  We set 
$$
\ell^2(V,\pds)=\left\{u:V\rightarrow \R \mid \sum_{i\in V} \pds_i u(i)^2<+\infty\right\}
$$
and endow it with the scalar product $\langle u,v\rangle_{\pds} =\sum_{i\in V} \pds_i u(i)v(i)$
and the norm $\|u\|_{\pds}=\langle u,u\rangle_\pds^{1/2}$. Let the quadratic form $\mathcal{Q}=\mathcal{Q}_w$ with domain $\mathcal{D}$ be given by
$$
\mathcal{Q}(u)=\frac12 \sum_{i,j\in V} w_{ij}(u(i)-u(j))^2, \quad \mathcal{D}=\left\{u\in \ell^2(V,\delta) \mid \mathcal{Q}(u)<\infty\right\}.
$$ 
The corresponding positive selfadjoint operator $L$ acts as 
\begin{equation}\label{e:Laplacian}
Lu(i)=\frac{1}{\pds_i}\sum_{j\in V}w_{ij}(u(i)-u(j))
\end{equation}
(see \cite[Theorem 1.12]{kellerbook}). In the cases that will be considered in this paper, 
$$
\sum_{i\in V} \delta_i<+\infty
$$ 
and there exists $C>0$ such that 
\begin{equation}\label{e:gammaplusgrandquen}
\forall i\in V, \quad C\pds_i\geq \sum_{j\in V} w_{ij}.
\end{equation}
As a consequence, $L$ is a bounded operator in $\ell^2(V,\delta)$. Indeed,
$$
|Lu(i)|^2 \leq \delta_i^{-2} \sum_{j\in V} w_{ij}|u(i)-u(j)|^2 \sum_{j\in V} w_{ij} \leq C\delta_i^{-1}\sum_{j\in V} w_{ij}|u(i)-u(j)|^2
$$
hence
$$
\|Lu\|_\delta^2=\sum_{i\in V} |Lu(i)|^2 \delta_i \leq 2C\sum_{i,j\in V}w_{ij}(|u(i)|^2+|u(j)|^2)=4C\sum_{i,j\in V}w_{ij}|u(i)|^2\leq 4C\|u\|_{\delta}^2.
$$ We notice that the function $\mathbf{1}\in\mathcal{D}$ equal to $1$ on all $V$ is in the kernel of $L$. We set
$$
\lambda_2(L)=\inf\{ \mathcal{Q}(u) \mid  \|u\|_\pds=1, \langle u,\mathbf{1}\rangle_\pds=0\}.
$$
The Cheeger inequality is a lower bound on $\lambda_2(L)$ in terms of a constant measuring how well the graph is connected, called the isoperimetric constant of $G$.

\subsubsection{Isoperimetric constant and Cheeger inequality}

For $U\subset V$ we denote by
$$
\text{vol}(U)=\sum_{i\in U}\pds_i
$$
its volume and by
\begin{equation}\label{e:sizebdry}
|\partial U|=\sum_{i\in U, j\notin U} w_{ij}
\end{equation}
the size of its boundary. Finally the isoperimetric constant of $G$ is
\begin{equation}\label{e:defiso}
h=\inf_{\substack{U\subset V\\ 0<\vol(U)\leq \frac12\vol(V)}} \frac{|\partial U|}{\vol(U)}.
\end{equation}

The following Cheeger inequality will be instrumental. 
\begin{proposition} \label{p:cheeger}
If \eqref{e:gammaplusgrandquen} holds, then $\lambda_2(L)\geq h^2/2C$ where $C$ is the constant in \eqref{e:gammaplusgrandquen}.
\end{proposition}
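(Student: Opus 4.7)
The plan is to adapt the classical discrete Cheeger argument to the weighted setting by combining a layer-cake (coarea) identity with a Cauchy--Schwarz inequality that exploits the uniform comparison \eqref{e:gammaplusgrandquen}. Fix $u\in\mathcal{D}$ with $\|u\|_\delta=1$ and $\langle u,\mathbf{1}\rangle_\delta=0$; it suffices to show $\mathcal{Q}(u)\geq h^2/(2C)$. First I would recenter $u$ at a $\delta$-median $m$, i.e.\ any real number such that $\delta(\{u>m\})\leq \tfrac12$ and $\delta(\{u<m\})\leq \tfrac12$. Since constants are in the kernel of $L$, $\mathcal{Q}(u-m)=\mathcal{Q}(u)$, and expanding gives
\[
\|u-m\|_\delta^2 = \|u\|_\delta^2 - 2m\langle u,\mathbf{1}\rangle_\delta + m^2\delta(V) = \|u\|_\delta^2 + m^2\delta(V),
\]
so the shift only enlarges the norm. (The orthogonality condition $\langle u,\mathbf{1}\rangle_\delta=0$ already requires $\mathbf{1}\in\ell^2(V,\delta)$, hence $\delta(V)<\infty$, which is implicit in the setting.) Splitting $u-m=f_+-f_-$ into its positive and negative parts, both are supported on sets of $\delta$-volume $\leq \tfrac12$, and a case analysis on the signs of $u(i)-m$ and $u(j)-m$ --- the only nontrivial case being opposite signs, where $(u(i)-u(j))^2=(f_+(i)+f_-(j))^2\geq f_+(i)^2+f_-(j)^2$ --- yields the key decomposition $\mathcal{Q}(u)\geq \mathcal{Q}(f_+)+\mathcal{Q}(f_-)$.

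The heart of the argument is then to prove, for each nonnegative $f\in\{f_+,f_-\}$, the ``Cheeger-squared'' estimate $\mathcal{Q}(f)\geq (h^2/2C)\|f\|_\delta^2$. For the lower bound, apply the layer-cake formula to $f^2$:
\[
\tfrac12\sum_{i,j\in V} w_{ij}|f(i)^2 - f(j)^2| = \sum_{i,j} w_{ij}\bigl(f(i)^2-f(j)^2\bigr)_+ = \int_0^\infty |\partial\{f^2>t\}|\,dt,
\]
with $|\partial\cdot|$ as in \eqref{e:sizebdry}. Because $\{f^2>t\}\subseteq \mathrm{supp}(f)$ has $\delta$-volume $\leq \tfrac12$ for every $t>0$, the definition \eqref{e:defiso} of $h$ applies level by level, and integration in $t$ gives $\sum_{i,j} w_{ij}|f(i)^2-f(j)^2|\geq 2h\|f\|_\delta^2$. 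For the upper bound on the same sum, Cauchy--Schwarz applied to the factorization $f(i)^2-f(j)^2=(f(i)-f(j))(f(i)+f(j))$ yields
\[
\sum_{i,j} w_{ij}|f(i)^2-f(j)^2| \leq \bigl(2\mathcal{Q}(f)\bigr)^{1/2}\Bigl(\sum_{i,j} w_{ij}(f(i)+f(j))^2\Bigr)^{1/2},
\]
and the rightmost factor is at most $2\sqrt{C}\,\|f\|_\delta$ by $(a+b)^2\leq 2(a^2+b^2)$ followed by \eqref{e:gammaplusgrandquen}. Combining produces exactly $\mathcal{Q}(f)\geq (h^2/2C)\|f\|_\delta^2$. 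Summing over $f\in\{f_+,f_-\}$, together with $\|f_+\|_\delta^2+\|f_-\|_\delta^2 = \|u-m\|_\delta^2\geq \|u\|_\delta^2=1$ and the decomposition from the previous paragraph, closes the argument.

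The main obstacle I anticipate is not conceptual but quantitative: obtaining the constant $1/(2C)$ rather than the weaker $1/(4C)$ requires treating $f_+$ and $f_-$ symmetrically by running the Cheeger estimate on both halves and summing, rather than discarding the smaller one. The median shift is precisely what ensures that \emph{both} halves have support of $\delta$-volume $\leq \tfrac12$, so that the definition \eqref{e:defiso} of $h$ applies to every super-level set of both $f_+^2$ and $f_-^2$. Once this bookkeeping is set up, the two remaining ingredients --- the coarea identity for $f^2$ and the Cauchy--Schwarz step --- are textbook, and the assumption \eqref{e:gammaplusgrandquen} enters in only one place, namely the final control of $\sum_{i,j} w_{ij}(f(i)+f(j))^2$ by a multiple of $\|f\|_\delta^2$, which is also where boundedness of $L$ guarantees that all sums involved are finite.
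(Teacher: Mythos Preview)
Your argument is correct and is essentially the same classical Cheeger proof as in the paper's Appendix~\ref{a:cheeger}, up to cosmetic repackaging. The paper orders the vertices, centers at the value $u(r)$ where $r$ is the last index with $\vol(S_r)\leq\vol(V)/2$ (this is precisely a $\delta$-median), reduces to one half $u_\pm$ via the mediant inequality $\frac{a+c}{b+d}\geq\min(\tfrac{a}{b},\tfrac{c}{d})$, and then runs the Cauchy--Schwarz step together with an Abel-summation version of the coarea identity on the ordered level sets $S_k$; you center at a median, keep both halves and sum, and phrase the coarea step as a genuine layer-cake integral over $t$. These are interchangeable: your summing-both-halves device and the paper's mediant reduction give the same constant $h^2/(2C)$, and Abel summation over the ordered $S_k$ is exactly the discrete coarea formula you wrote. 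One small caveat: the paper's definition \eqref{e:defiso} uses $\vol(U)\leq\tfrac12$ while its own proof works with $\vol(U)\leq\vol(V)/2$; your median should accordingly be read as a $\delta$-median relative to the total mass $\vol(V)$ (which need not equal $1$ in the application), so that a median always exists and every super-level set of $f_\pm^2$ is admissible in \eqref{e:defiso}. With that reading your proof goes through verbatim.
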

For completeness, we provide in Appendix \ref{a:cheeger} an elementary proof assuming that $V$ is finite, which is sufficient for the present paper since we apply Proposition \ref{p:cheeger} only to finite graphs (in Lemma \ref{l:inf}). A proof of Proposition \ref{p:cheeger} for infinite graphs may be found in \cite[Theorem 3.5]{mokhtari}. A more general Cheeger inequality, applying as well to infinite graphs which do not verify \eqref{e:gammaplusgrandquen} has been proved in \cite[Theorem 13.4]{kellerbook}.

\subsection{Gluing variance inequalities}\label{s:gluing}
\begin{definition}\label{d:typeABD}
Let $\mnum,\mrat<+\infty$. An absolutely continuous probability measure $\rho(x)dx$ on a bounded set $\X$ is said to be of type $(\mnum,\mrat)$ with respect to a finite family $(Q_i)_{i\in V}$ of convex sets such that
\begin{equation}\label{e:Omega}
\spt(\rho)=\bigcup_{i\in V} Q_i
\end{equation}
if the following properties are verified:
\begin{enumerate}[(i)]
\item Each $Q_i$ intersects at most $\mnum$ of the sets $Q_j$'s, including itself.
\item For any $i\in V$,
\begin{equation} \label{e:unbddrho}
\frac{\sup_{x\in Q_i} \rho_{|Q_i}(x)}{\inf_{x\in Q_i} \rho_{|Q_i}(x)}\leq \mrat.
\end{equation}
\end{enumerate}
We simply say that $\rho$ is of type $(\mnum,\mrat)$ if it is of type $(\mnum,\mrat)$ with respect to some finite family $(Q_i)_{i\in V}$ satisfying the above conditions.
\end{definition}
To any probability measure of type $(\mnum,\mrat)$ it is possible to associate an undirected weighted graph as follows.
\begin{definition}[Weighted graph associated to a measure of type $(\mnum,\mrat)$]\label{d:weigraph}
To an absolutely continuous probability measure $\rho$ of type $(\mnum,\mrat)$ with respect to a finite family $(Q_i)_{i\in V}$ we associate a weighted graph $G=(V,E,\delta,w)$ as follows. Its vertices are given by the set $V$, and each $i\in V$ has weight 
$$
\pds_i=\rho(Q_i).
$$
The edges $E$ of $G$ are the couples $(i,j)$ for which  $w_{ij}>0$ (and in particular $i\neq j$). Each edge $(i,j)$ is endowed with a weight
$$
w_{ij}=\rho(Q_i\cap Q_j).
$$
The positive graph Laplacian $L$ on $\mathcal{D}\subset \ell^2(V,\pds)$ acts as \eqref{e:Laplacian}. The condition \eqref{e:gammaplusgrandquen} is verified with $C:=\mnum$.
\end{definition}

This subsection is devoted to the proof of the following result, which is an analogue of Theorem \ref{t:stability-compact} for measures $\rho$ of type $(\mnum,\mrat)$. Our proof follows and improves the strategy of proof of \cite[Proposition B.2]{barycenters}. Recall that the dual Brenier potentials $\psi_\mu, \psi_\nu$ have been introduced in \eqref{e:dualbrenier}.
\begin{theorem}\label{l:varineq}
Let $\rho$ be an absolutely continuous probability measure on a bounded set $\X\subset\R^d$ of type $(\mnum,\mrat)$, whose graph Laplacian associated to the decomposition \eqref{e:Omega} is denoted by $L$. We assume that $\lambda_2(L)>0$. Let $\mathcal{Y}\subset\R^d$ be compact. Then, for any probability measures $\mu,\nu$ supported in $\mathcal{Y}$, 
\begin{equation}\label{e:varineq}
\Var_\rho(\phi_\mu-\phi_\nu)\leq e   R_{\mathcal{Y}} \diam(\X) \mnum^2\mrat\left(1+\frac{2\mnum}{\lambda_2(L)}\right)\sca{\psi_\mu-\psi_\nu}{\nu-\mu}.
\end{equation}
\end{theorem}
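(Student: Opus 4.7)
The plan is to apply Theorem~\ref{t:stability-compact} separately in each convex set $Q_i$ and then to glue the resulting local variance inequalities into a global one by exploiting the spectral gap $\lambda_2(L)$ of the graph Laplacian. Set $f := \phi_\mu - \phi_\nu$ and, for each $i\in V$, $f_i := \frac{1}{\pds_i}\int_{Q_i} f\, d\rho$ where $\pds_i = \rho(Q_i)$.

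First I would apply Theorem~\ref{t:stability-compact} to $\tilde\rho_i := \rho_{|Q_i}/\rho(Q_i)$ on the compact convex set $Q_i$, taking $\sigma$ to be the normalized Lebesgue measure on $Q_i$. Property (ii) of Definition~\ref{d:typeABD} gives $m_{\tilde\rho_i}/M_{\tilde\rho_i}\geq 1/\mrat$, and $\diam(Q_i)\leq \diam(\X)$, so with $\psi_0 = \psi_\mu$, $\psi_1 = \psi_\nu$ the inequality \eqref{eq:dm} reads
\begin{equation*}
\pds_i \Var_{\tilde\rho_i}(f) \leq e\, \mrat\, R_{\YSp}\, \diam(\X) \int_{Q_i} h\, d\rho,
\end{equation*}
where $h(x) := (\psi_\mu - \psi_\nu)(T_\nu(x)) - (\psi_\mu - \psi_\nu)(T_\mu(x))$. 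A short argument based on the Fenchel-Young inequality (applied with the pairs $(x,T_\mu(x))$ and $(x,T_\nu(x))$ for each of $\phi_\mu$ and $\phi_\nu$) shows that $h \geq 0$ pointwise. Summing over $i \in V$ and using property (i) in the form $\sum_i \chi_{Q_i} \leq \mnum + 1$, I obtain
\begin{equation}\label{eq:plan-loc}
\sum_{i\in V} \pds_i \Var_{\tilde\rho_i}(f) \leq e\, \mrat\, R_{\YSp}\, \diam(\X)\, (\mnum + 1)\, \sca{\psi_\mu - \psi_\nu}{\nu - \mu}.
\end{equation}

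The key step is then to control $\Var_\rho(f)$ by $\sum_i \pds_i \Var_{\tilde\rho_i}(f)$, up to a factor depending on $\lambda_2(L)$. Let $i:\X\to V$ be a measurable selector such that $x\in Q_{i(x)}$ for each $x\in\X$, and let $\bar f_\pds = \sum_i \pds_i f_i/\sum_i \pds_i$. The triangle inequality in $L^2(\rho)$ gives
\begin{equation*}
\Var_\rho(f)^{1/2} \leq \|f - f_{i(\cdot)}\|_{L^2(\rho)} + \|f_{i(\cdot)} - \bar f_\pds\|_{L^2(\rho)},
\end{equation*}
with $\|f - f_{i(\cdot)}\|_{L^2(\rho)}^2 \leq \sum_i \pds_i \Var_{\tilde\rho_i}(f)$ and $\|f_{i(\cdot)} - \bar f_\pds\|_{L^2(\rho)}^2 \leq \sum_i \pds_i (f_i - \bar f_\pds)^2$ (the pushforward of $\rho$ by $i(\cdot)$ has density at most $1$ with respect to $(\pds_i)_{i\in V}$). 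The spectral gap enters through the Poincaré-type bound $\sum_i \pds_i (f_i - \bar f_\pds)^2 \leq \lambda_2(L)^{-1}\, \mathcal{Q}(f_\cdot)$, and the Dirichlet form $\mathcal{Q}(f_\cdot) = \tfrac12 \sum_{i,j} w_{ij}(f_i - f_j)^2$ is itself controlled by local variances: Jensen's inequality yields $w_{ij}(f_i - f_j)^2 \leq \int_{Q_i\cap Q_j}(f_i - f_j)^2 d\rho$, and combining with $(f_i - f_j)^2 \leq 2(f - f_i)^2 + 2(f - f_j)^2$ and the overlap bound $\sum_{j\neq i}\chi_{Q_j}(x) \leq \mnum$ on $Q_i$ gives $\mathcal{Q}(f_\cdot) \leq 2\mnum \sum_i \pds_i \Var_{\tilde\rho_i}(f)$.

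Collecting these estimates and using $(1+\sqrt{t})^2 \leq 2(1+t)$, I would deduce
\begin{equation*}
\Var_\rho(f) \leq 2\Bigl(1 + \tfrac{2\mnum}{\lambda_2(L)}\Bigr)\sum_i \pds_i \Var_{\tilde\rho_i}(f),
\end{equation*}
which combined with \eqref{eq:plan-loc} yields an inequality of the form \eqref{e:varineq} with a constant comparable, though not literally equal, to $\mnum^2 \mrat(1 + 2\mnum/\lambda_2(L))$. The principal obstacle is the graph Poincaré step just described: one must convert the spectral-gap assumption on the weighted graph, which is combinatorial in nature, into an $L^2(\rho)$-level control on the block means $(f_i)$, while simultaneously absorbing the Dirichlet energy of those means back into the local variances. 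This two-sided use of the overlap structure is precisely what makes the Laplacian $L$ attached to the covering $(Q_i)$ the right object.
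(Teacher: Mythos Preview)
Your proposal is correct and follows essentially the same route as the paper: apply Theorem~\ref{t:stability-compact} on each $Q_i$, observe the resulting integrand $h\ge 0$ pointwise, sum using the bounded overlap, and glue the local variances via the spectral gap of $L$. The only difference is cosmetic: the paper's gluing step (Lemma~\ref{l:gluevargauss}) expands $\Var_\rho(f)=\tfrac12\iint(f(x)-f(y))^2\,d\rho\,d\rho$ and bounds it by $S\sum_i\pds_i\Var_{\rho_i}(f)+\tfrac12\sum_{i,j}\pds_i\pds_j(m_i-m_j)^2$, whereas you use a measurable selector and the triangle inequality in $L^2(\rho)$; both then invoke $\lambda_2(L)$ on the vector of means and bound $\mathcal Q(f_\cdot)$ by $2\mnum\sum_i\pds_i\Var_{\rho_i}(f)$ in the same way. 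Your final constant $2(\mnum+1)\mrat\bigl(1+2\mnum/\lambda_2(L)\bigr)$ is indeed only comparable to, and for $\mnum\ge 3$ actually smaller than, the paper's $\mnum^2\mrat\bigl(1+2\mnum/\lambda_2(L)\bigr)$.
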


Notice that since the set $\X$ is bounded, $\psi_\mu$ and $\psi_\nu$ are continuous functions in $\mathcal{Y}$. A key step in the proof is the following lemma, which plays the role of Lemma \ref{l:gluevarjohn} in the present case:
\begin{lemma}\label{l:gluevargauss}
For any $i\in V$, let $\rho_i=\frac{1}{\rho(Q_i)}\rho_{|Q_i}$. Let $f:\R^d\rightarrow \R$ be a continuous function.  Then
\begin{equation}\label{e:varrhof}
\Var_\rho(f)\leq \mnum\left(1+\frac{2\mnum}{\lambda_2(L)}\right)\sum_{i\in V}\pds_i\Var_{\rho_i}(f).
\end{equation}
\end{lemma}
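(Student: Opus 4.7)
The strategy is to decompose $\Var_\rho(f)$ as a sum of \emph{local} variances on the cells $Q_i$ plus a \emph{macroscopic} variance of the cell averages $f_i:=\int f\,d\rho_i$ viewed as a function on the graph $G$, and then to reabsorb the macroscopic term into the sum of local variances using the Poincaré inequality associated to the spectral gap $\lambda_2(L)$ together with the overlap condition of Definition \ref{d:typeABD}(i). The construction of $G$ in Definition \ref{d:weigraph} is tailored to make this last step almost automatic.

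First I would reduce the problem to controlling the macroscopic term. Since $\spt(\rho)=\bigcup_{i\in V}Q_i$, one has the pointwise inequality $\chi_{\spt(\rho)}\leq \sum_{i\in V}\chi_{Q_i}$, so for every constant $c\in\R$,
$$\int (f-c)^2 d\rho \leq \sum_{i\in V} \int_{Q_i}(f-c)^2 d\rho = \sum_{i\in V} \delta_i\bigl[\Var_{\rho_i}(f)+(f_i-c)^2\bigr].$$
Taking the infimum over $c$ yields
$$\Var_\rho(f) \leq \sum_{i\in V}\delta_i \Var_{\rho_i}(f) + \inf_{c\in\R}\sum_{i\in V}\delta_i(f_i-c)^2,$$
and only the second summand needs further work.

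The second summand is exactly the variance of the function $g:V\to\R$, $g(i)=f_i$, with respect to the weights $\delta$; by the variational characterization of $\lambda_2(L)$ and the fact that $\mathcal{Q}$ vanishes on additive constants,
$$\inf_{c\in\R}\sum_i\delta_i(f_i-c)^2 \leq \frac{1}{\lambda_2(L)}\mathcal{Q}(g) = \frac{1}{2\lambda_2(L)}\sum_{i,j\in V}w_{ij}(f_i-f_j)^2.$$
The decisive step is then to control $\mathcal{Q}(g)$ by the local variances, using that $w_{ij}=\rho(Q_i\cap Q_j)$ is exactly the mass on which the two means $f_i,f_j$ can be compared to $f$ itself. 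Writing $f_i-f_j=(f_i-f(x))+(f(x)-f_j)$, squaring and averaging over $x\in Q_i\cap Q_j$ with respect to $\rho/w_{ij}$ gives
$$w_{ij}(f_i-f_j)^2 \leq 2\int_{Q_i\cap Q_j}(f_i-f)^2 d\rho + 2\int_{Q_i\cap Q_j}(f-f_j)^2 d\rho \leq 2\delta_i\Var_{\rho_i}(f)+2\delta_j\Var_{\rho_j}(f).$$
Summing over all pairs $(i,j)$ and using that each $i\in V$ has at most $\mnum$ neighbors in $G$ (Definition \ref{d:typeABD}(i)) delivers $\mathcal{Q}(g)\leq 2\mnum \sum_i\delta_i\Var_{\rho_i}(f)$, which combined with the two previous displays yields an inequality of the form \eqref{e:varrhof}.

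The point I expect to require the most care is the Cauchy--Schwarz step producing the bound on $w_{ij}(f_i-f_j)^2$: it is crucial that the edge weight $w_{ij}=\rho(Q_i\cap Q_j)$ chosen in Definition \ref{d:weigraph} matches \emph{exactly} the mass of the overlap, because this is what absorbs the factor $w_{ij}^{-1}$ coming from Jensen/Cauchy--Schwarz and turns differences of averages into $L^2(\rho)$-errors. Any smaller choice of weight would force an extra dependence on the geometry of the overlap. The remaining steps are combinatorial, with the degree bound $\mnum$ entering cleanly through the summation over edges; the extra factor $\mnum$ in front of the parenthesis in the stated inequality seems to be a safety margin compared with the cleaner bound $(1+2\mnum/\lambda_2(L))$ given by the argument sketched here.
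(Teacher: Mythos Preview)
Your argument is correct and follows the same overall strategy as the paper: split $\Var_\rho(f)$ into a sum of local variances plus a macroscopic variance of the cell means, bound the latter via the spectral gap $\lambda_2(L)$, and then control the resulting Dirichlet form $\mathcal{Q}(g)$ in terms of the local variances using the overlap structure and the degree bound $\mnum$.

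The one genuine difference is in the initial decomposition. The paper uses the identity $\Var_\rho(f)=\tfrac12\iint (f(x)-f(y))^2 d\rho(x)d\rho(y)$ and splits the double integral over $Q_i\times Q_j$; this produces an extra factor $S=\sum_j\delta_j\leq \mnum$ in front of both the local-variance term and the macroscopic term, which is why the stated bound carries the prefactor $\mnum$. Your single-integral decomposition via the pointwise inequality $\chi_{\spt\rho}\leq\sum_i\chi_{Q_i}$ avoids this and gives directly $\bigl(1+\tfrac{2\mnum}{\lambda_2(L)}\bigr)\sum_i\delta_i\Var_{\rho_i}(f)$, which is indeed sharper. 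Your treatment of $(f_i-f_j)^2$ by averaging $f(x)$ over $Q_i\cap Q_j$ is also a streamlined version of the paper's passage through the intermediate mean $m_{i\cap j}$, but the two are equivalent. So your observation that the extra $\mnum$ is slack is justified.
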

\begin{proof}[Proof of Lemma \ref{l:gluevargauss}]
We set $S=\sum_{j\in V}\pds_j$. We first show that 
\begin{equation}\label{e:varineqpropB2}
\text{Var}_\rho(f)\leq S\sum_{i\in V} \pds_i\text{Var}_{\rho_i}(f)+\frac12\sum_{i,j\in V}(m_i-m_j)^2\pds_i\pds_j
\end{equation}
where $m_i=\int_{Q_i}fd\rho_i$. The proof goes as follows:
\begin{align*}
\Var_\rho(f)&=\frac12 \int_{\R^d\times\R^d}(f(x)-f(y))^2d\rho(x)d\rho(y)\\
&\leq \frac12 \sum_{i,j\in V} \int_{Q_i\times Q_j} (f(x)-f(y))^2d\rho(x)d\rho(y) \\
&=\frac12\sum_{i,j\in V}\int_{Q_i\times Q_j} (f(x)-m_i+m_i-m_j+m_j-f(y))^2d\rho(x)d\rho(y)\\
&=S\sum_{i\in V}\int_{Q_i}(f(x)-m_i)^2d\rho(x)+\frac12\sum_{i,j\in V}(m_i-m_j)^2\rho(Q_i)\rho(Q_j)
\end{align*}
which is exactly \eqref{e:varineqpropB2}.

Let $m=(m_i)_{i\in V}$. We denote the weighted mean of the $m_i$ by $\widetilde{m}=S^{-1}\sum_{i\in V} \pds_im_i$. The computations below show that $m-\widetilde{m}\in \ell^2(V,\pds)$ and $\mathcal{Q}(m-\widetilde{m})<+\infty$ (of course for this we need to assume that the right-hand side in \eqref{e:varrhof} is finite, which we do in the sequel). Moreover, $\langle m-\widetilde{m},\mathbf{1}\rangle_\pds=0$. We compute
\begin{align}
\frac{1}{2}\sum_{i,j\in V}(m_i-m_j)^2\pds_i\pds_j=S\|m-\widetilde{m}\|_\pds^2\nonumber&\leq \frac{S}{\lambda_2(L)}\langle m-\widetilde{m},L(m-\widetilde{m})\rangle_\pds\\
&=\frac{S}{2\lambda_2(L)}\sum_{i,j\in V}w_{ij}(m_i-m_j)^2.\label{e:mimj}
\end{align}

For any $i\neq j$ such that $w_{ij}>0$, denoting $m_{i\cap j}=\frac{1}{\rho(Q_i\cap Q_j)}\int_{Q_i\cap Q_j} fd\rho$, one has 
$$
\frac12(m_i-m_j)^2\leq (m_{i\cap j}-m_i)^2+(m_{i\cap j}-m_j)^2.
$$
For such $i,j\in V$,
\begin{align*}
(m_{i\cap j}-m_i)^2&=\left(\frac{1}{\rho(Q_i\cap Q_j)}\int_{Q_i\cap Q_j}(f-m_i)d\rho\right)^2\leq  \frac{1}{\rho(Q_i\cap Q_j)}\int_{Q_i\cap Q_j}(f-m_i)^2d\rho \\ 
&\leq  \frac{1}{\rho(Q_i\cap Q_j)}\int_{Q_i}(f-m_i)^2d\rho =\frac{\rho(Q_i)}{w_{ij}}\Var_{\rho_i}(f),
\end{align*}
and similarly for $(m_{i\cap j}-m_j)^2$, therefore we get
$$
\frac12 (m_i-m_j)^2\leq \frac{\pds_i}{w_{ij}}\text{Var}_{\rho_i}(f)+\frac{\pds_j}{w_{ij}}\text{Var}_{\rho_j}(f).
$$
Plugging into \eqref{e:mimj} yields
\begin{align*}
\frac12 \sum_{i,j\in V}(m_i-m_j)^2\pds_i\pds_j&\leq \frac{S}{\lambda_2(L)}\sum_{i\in V}\sum_{j \mid \rho(Q_i\cap Q_j)>0}(\pds_i\text{Var}_{\rho_i}(f)+\pds_j\text{Var}_{\rho_j}(f))\\
&\leq \frac{2\mnum S}{\lambda_2(L)}\sum_{i\in V} \pds_i\text{Var}_{\rho_i}(f).
\end{align*}
All in all and using that $S\leq \mnum$, we have proved \eqref{e:varrhof}.
\end{proof}

\begin{proof}[Proof of Theorem \ref{l:varineq}] 
Lemma \ref{l:gluevargauss} for $f=\phi_\mu-\phi_\nu$  gives 
\begin{equation}\label{e:ibonikis}
\Var_{\rho}(\phi_\mu-\phi_\nu)\leq \mnum\left(1+\frac{2\mnum}{\lambda_2(L)}\right) \sum_{i\in V} \pds_i\Var_{\rho_i}(\phi_\mu-\phi_\nu).
\end{equation}
For any $i\in V$, we apply Theorem \ref{t:stability-compact} to $\rho_i$ (supported in $Q_i$) with $\sigma$ being the normalized Lebesgue measure on $Q_i$. The functions $\psi_\mu$ and $\psi_\nu$ are both continuous in $\mathcal{Y}$. Since $\psi_\mu^*=\phi_\mu$ and $\psi_\nu^*=\phi_\nu$, we get 
\begin{equation}\label{e:pliplo}
\text{Var}_{\rho_i}(\phi_\mu-\phi_\nu)\leq e  R_{\mathcal{Y}} \diam(\X)\left(\frac{M_{\rho_i}}{m_{\rho_i}}\right)\sca{\psi_\mu-\psi_\nu}{(\nabla \phi_\nu)_\# \rho_i-(\nabla \phi_\mu)_\# \rho_i}
\end{equation}
where $M_{\rho_i}$ and $m_{\rho_i}$ are defined according to \eqref{e:supetinf}.
Finally we observe that
\begin{equation}\label{e:plusptiquA}
\begin{aligned}
\sum_{i\in V}\pds_i\sca{\psi_\mu-\psi_\nu}{(\nabla \phi_\nu)_\# \rho_i-(\nabla \phi_\mu)_\# \rho_i}&=\sum_{i\in V}\sca{\psi_\mu-\psi_\nu}{(\nabla \phi_\nu)_\# \rho_{|Q_i}-(\nabla \phi_\mu)_\# \rho_{|Q_i}}\\
&\leq A\sca{\psi_\mu-\psi_\nu}{(\nabla \phi_\nu)_\# \rho-(\nabla \phi_\mu)_\# \rho}.
\end{aligned}
\end{equation}
since the above sum contains only non-negative terms (due to \eqref{e:pliplo}) and each point $x\in\X$ belongs to at most $\mnum$ sets $Q_i$. Combining \eqref{e:unbddrho}, \eqref{e:ibonikis}, \eqref{e:pliplo} and \eqref{e:plusptiquA}, we get \eqref{e:varineq}.
\end{proof}

\subsection{Estimates for generalized Cauchy distributions}\label{s:powerlaws}
In the present section, assuming that $\rho$ is of the form
\begin{equation}\label{e:powerlaw}
\rho(x)=c(x)\langle x\rangle^{-\beta}, \qquad m\leq c(x)\leq M,
\end{equation}
we construct a decomposition of $\R^d$ into a family $(Q_i)_{i\in V}$ of convex sets satisfying Definition \ref{d:typeABD}, and we check all properties that will be needed to apply Theorem \ref{l:varineq}. As already mentioned in Section \ref{s:warmup}, it will be necessary to consider truncations $\rho_r$ of $\rho$ in order to apply Theorem \ref{l:varineq}, and therefore we consider for each $r$ a subfamily of $(Q_i)_{i\in V}$ which covers the support of $\rho_r$. 

\subsubsection{Decomposition of $\R^d$ and construction of a family of weighted graphs} \label{s:decompoRd}
Let $\rho$ be of the form \eqref{e:powerlaw}. We construct a locally finite family $(Q_j)_{j\in V}$ of subsets of $\R^d$ (depending on $\ppow$, omitted in the notation), indexed by $V=\N\times \{-1,1\}^d$. We refer the reader to Figure \ref{fig:powerlaw} for an illustration. 

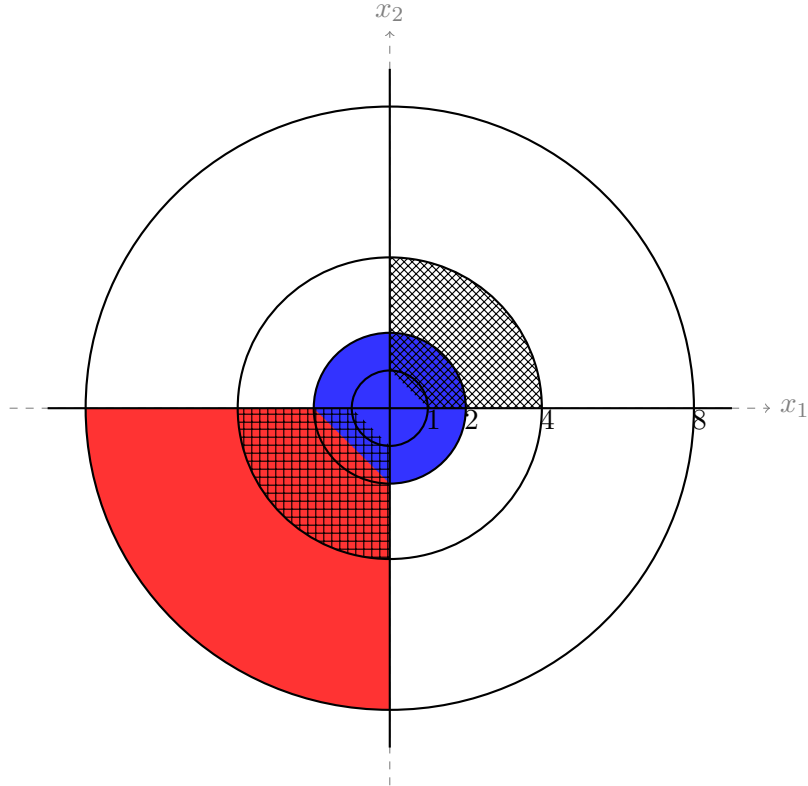
\begin{figure}[h]
\centering
\begin{tikzpicture}[scale=0.5]
        \draw[dashed, gray, ->] (-10, 0) -- (10, 0) node[anchor=west] {$x_1$};
    \draw[dashed, gray, ->] (0, -10) -- (0, 10) node[anchor=south] {$x_2$};
        \fill[blue!80,  opacity=0.5] 
        (0,0) -- (2,0)
        arc[start angle=0, end angle=360, radius=2] -- (0,0);
        \fill[red!80,  opacity=0.5] 
        (0,-2) -- (0,-8)
        arc[start angle=270, end angle=180, radius=8] -- (-2,0) -- (0,-2);
        \fill[pattern=grid, pattern color=black]
        (0,-1) -- (0,-4)
        arc[start angle=270, end angle=180, radius=4] -- (-1,0) -- (0,-1);
        \fill[pattern=crosshatch, pattern color=black]
        (1,0) -- (4,0)
        arc[start angle=0, end angle=90, radius=4] -- (0,1) -- (1,0);

    \draw[thick] (0,0) circle (1);
    \draw[thick] (0,0) circle (2);
    \draw[thick] (0,0) circle (4);
    \draw[thick] (0,0) circle (8);

    \draw[thick] (-9, 0) -- (9, 0);  
    \draw[thick] (0, -9) -- (0, 9);  

    \node at (1.15,-0.3) {$1$};
    \node at (2.15,-0.3) {$2$};
    \node at (4.15,-0.3)  {$4$};
    \node at (8.15,-0.3)  {$8$};
\end{tikzpicture}
\caption{The decomposition of $\R^d$ for $d=2$. Four domains $Q_{(J,\sigma)}$ are drawn: in blue, in red, with a grid and with crosshatches.}
\label{fig:powerlaw}
\end{figure}

We set $\Cor_0=B(0,2)$ and
\begin{equation}\label{e:Ink}
\Cor_{J}=B(0,2^{J+1})\setminus B(0,2^{J-1})
\end{equation}
for $J\in\N^*$. There holds
$$
\R^d=\bigcup_{J\in\N}\Cor_{J}.
$$
This is not a partition since some of the $\Cor_{J}$ overlap, but any point in $\R^d$ belongs to at most two of these sets. We consider the $2^d$ orthants
$$
\H_\sigma= \{x\in\R^d \mid \forall i\in[d], \ \sigma_ix_i\geq 0\}
$$
for $\sigma=(\sigma_1,\ldots,\sigma_d)\in\{-1,1\}^d$. We denote by ${\rm Conv}$ the convex-hull, and we finally define for $(J,\sigma)\in\N^*\times\{-1,1\}^d$
\begin{equation}\label{e:Conv}
Q_{(J,\sigma)}={\rm Conv}(\Cor_J\cap \H_\sigma)
\end{equation}
(which is included in the convex set $\H_\sigma$) and $Q_{(0,\sigma)}=\mathcal{C}_0=B(0,2)$ for any $\sigma\in\{-1,1\}^d$.

We consider as in Definition \ref{d:weigraph} the graph $G=(V,E,\delta,w)$ associated to the sets $Q_{(J,\sigma)}$. 
We could prove various properties of the graph $G$, for instance that it has a positive spectral gap. But since we need later to perform a truncation of $\R^d$ in order to prove Theorem \ref{t:logconcave} (as explained in Section \ref{s:warmup}), we instead define a family of truncated graph depending on a parameter $r$, and prove properties of this family which are ``uniform in $r$".

We define this family of truncated graphs (i.e., subgraphs of $G$) as follows, see Figure \ref{fig:graph} for an illustration. For any $r\geq 1$ of the form $r=2^n$ for some $n\in\N^*$, $G_r=(V_r,E_r,\delta_r,w_r)$ is the restriction of $G=(V,E,\delta,w)$ to the set of vertices $(J,\sigma)\in V$ such that $Q_{(J,\sigma)}\subset \Ball_r$. The weights on the vertices $V_r$ and edges $E_r$ are unchanged compared to those on $V,E$. Due to \eqref{e:Ink} and \eqref{e:Conv}, for any $r=2^n$,
$$
\rho_r=\frac{\rho_{|\Ball_r}}{\rho(\Ball_r)} \quad \text{verifies} \quad \spt(\rho_r)=\bigcup_{(J,\sigma)\in V_r} Q_{(J,\sigma)}.
$$

\begin{figure}[h]
\centering

\begin{tikzpicture}[scale=0.8]
    \draw[fill=black] (0,0) circle[radius=0.1];
    \draw[fill=black] (1, 1) circle[radius=0.1];
    \draw[fill=black] (2, 2) circle[radius=0.1];
    \draw[fill=black] (3, 3) circle[radius=0.1];
    \draw[fill=black] (-1,-1) circle[radius=0.1];
    \draw[fill=black] (-2,-2) circle[radius=0.1];
    \draw[fill=black] (-3,-3) circle[radius=0.1];
    \draw[fill=black] (1,-1) circle[radius=0.1];
    \draw[fill=black] (2,-2) circle[radius=0.1];
    \draw[fill=black] (3,-3) circle[radius=0.1];
    \draw[fill=black] (-1, 1) circle[radius=0.1];
    \draw[fill=black] (-2, 2) circle[radius=0.1];
    \draw[fill=black] (-3,3) circle[radius=0.1];

    \draw (-3,-3) -- (3,3);
    \draw (-3,3) -- (3,-3);
\draw[-] (0,0) .. controls (1.2,0.6) .. (2,2);
\draw[-] (0,0) .. controls (1.2,-0.6) .. (2,-2);
\draw[-] (0,0) .. controls (-1.2,0.6) .. (-2,2);
\draw[-] (0,0) .. controls (-1.2,-0.6) .. (-2,-2);

\draw[-] (1,1) .. controls (1.8,2.6) .. (3,3);
\draw[-] (1,-1) .. controls (1.8,-2.6) .. (3,-3);
\draw[-] (-1,1) .. controls (-1.8,2.6) .. (-3,3);
\draw[-] (-1,-1) .. controls (-1.8,-2.6) .. (-3,-3);


\node at (2.1,1) {$Q_{(0,(1,1))}$};
\node at (3.1, 2) {$Q_{(1,(1,1))}$}; 
\node at (4, 3) {$Q_{(2,(1,1))}$}; 
\node at (2.2, -1) {$Q_{(0,(1,-1))}$};
\node at (3.2, -2) {$Q_{(1,(1,-1))}$}; 
\node at (4.2, -3) {$Q_{(2,(1,-1))}$}; 

\end{tikzpicture}
\caption{The graph $G_r$ for $r=2^3$ and $d=2$. Only a few labels are written.} 
\label{fig:graph}
\end{figure}
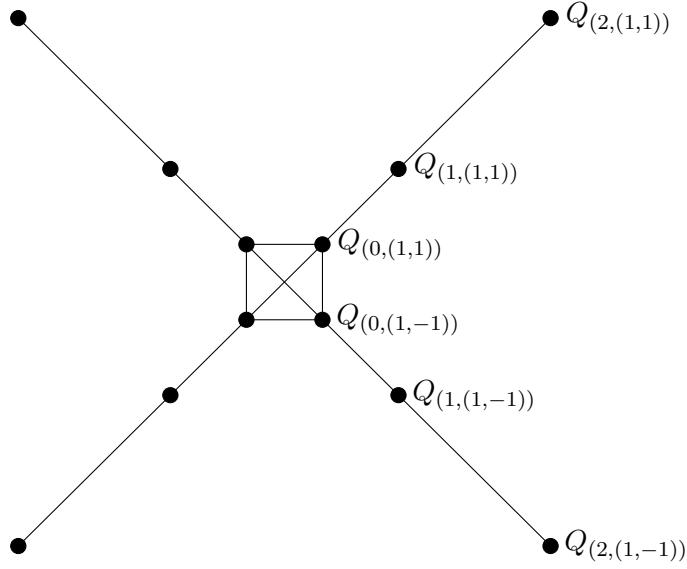

\subsubsection{Uniform type}\label{s:jpp3}

In Section \ref{s:conclulogconcave} we apply Theorem \ref{l:varineq} to $\rho_r$ for $r$ large enough. For this, we prove here a uniform upper bound (independent of $r$) on the constants $\mnum, \mrat$ appearing in the right-hand side of \eqref{e:varineq}:
\begin{lemma}\label{l:type}
There exist $\mnum, \mrat<+\infty$ such that for any $r=2^n$, $n\in\N^*$, $\rho_r$ is of type $(\mnum,\mrat)$ with respect to $(Q_{(J,\sigma)})_{(J,\sigma)\in V_r}$.
\end{lemma}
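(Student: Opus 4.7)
I would verify directly the two conditions of Definition \ref{d:typeABD}, with constants depending only on $d$, $\beta$, $m$, and $M$, hence uniform in $r$.

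The starting point is a geometric description of the building blocks. For every $J \geq 1$ and every $\sigma \in \{-1,1\}^d$,
\begin{equation*}
Q_{(J,\sigma)} \subset \left\{x \in \H_\sigma : \frac{2^{J-1}}{\sqrt{d}} \leq \|x\|_2 \leq 2^{J+1}\right\}.
\end{equation*}
The upper bound is immediate, since $\|y\|_2 \leq 2^{J+1}$ for every $y \in \Cor_J \cap \H_\sigma$ and the ball $\Ball_{2^{J+1}}$ is convex. For the lower bound, every $y \in \Cor_J \cap \H_\sigma$ satisfies $\sum_i \sigma_i y_i = \|y\|_1 \geq \|y\|_2 \geq 2^{J-1}$, a linear inequality preserved by convex combinations taken inside $\H_\sigma$; combining with $\|x\|_1 \leq \sqrt{d}\,\|x\|_2$ yields the claim. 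The central cell $Q_{(0,\sigma)} = \Ball_2$ is treated separately.

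To verify condition (i), the characterization forces any intersecting pair $Q_{(J,\sigma)} \cap Q_{(J',\sigma')} \neq \emptyset$ with $J, J' \geq 1$ to have overlapping radial ranges, which gives $|J-J'| \leq 2 + \tfrac{1}{2}\log_2 d$. Combined with the $2^d$ possible values of $\sigma'$, this bounds the number of neighbors of each cell by $(5 + \log_2 d)\cdot 2^d$, and an analogous count handles the central cells. These are intrinsic bounds on the full graph $G$ and therefore transfer to any subgraph $G_r$, yielding a uniform $\mnum = \mnum(d)$.

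For condition (ii), the factor $\rho(\Ball_r)^{-1}$ cancels in the ratio, so $\sup_{Q_i}\rho_r / \inf_{Q_i}\rho_r = \sup_{Q_i}\rho / \inf_{Q_i}\rho$. Writing $\rho(x) = c(x)\langle x \rangle^{-\beta}$ with $m \leq c \leq M$,
\begin{equation*}
\frac{\sup_{Q_{(J,\sigma)}} \rho}{\inf_{Q_{(J,\sigma)}} \rho} \leq \frac{M}{m}\left(\frac{\sup_{Q_{(J,\sigma)}} \langle x \rangle}{\inf_{Q_{(J,\sigma)}} \langle x \rangle}\right)^{\beta} \leq \frac{M}{m}\left(\frac{\sqrt{1 + 2^{2J+2}}}{\sqrt{1 + 2^{2J-2}/d}}\right)^{\beta},
\end{equation*}
which is bounded uniformly in $J \geq 1$ (its $J \to \infty$ limit is $(M/m)(4\sqrt{d})^{\beta}$); the central cell gives ratio at most $(M/m)5^{\beta/2}$. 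This yields a uniform $\mrat = \mrat(d,\beta,m,M)$, concluding the proof. The only real subtlety is obtaining the inner radial bound $\|x\|_2 \geq 2^{J-1}/\sqrt{d}$ on the entire convex hull, via the $\ell^1$--$\ell^2$ comparison within a single orthant; all remaining steps are elementary counting, and none of the constants depend on $r$.
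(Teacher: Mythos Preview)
Your proof is correct and follows essentially the same approach as the paper's own proof: both establish the containment $Q_{(J,\sigma)}\subset B(0,2^{J+1})\setminus B(0,d^{-1/2}2^{J-1})$ via the linear inequality $\sum_i\sigma_i x_i\geq 2^{J-1}$ (preserved under convex combinations) combined with the $\ell^1$--$\ell^2$ comparison, then deduce the neighbor bound $(5+\log_2 d)\,2^d$ and the density-ratio bound $(M/m)(4\sqrt d)^\beta$. Your treatment is in fact slightly more careful than the paper's, since you work with $\langle x\rangle$ rather than $|x|$ in the ratio estimate and you explicitly note that the normalization $\rho(\Ball_r)^{-1}$ cancels.
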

\begin{proof} 
Let $(J,\sigma)\in V$ and assume $J\neq 0$ (for $J=0$, the number of intersections with other sets is automatically bounded and \eqref{e:unbddrho} may be checked by hand). We observe that 
$$
Q_{(J,\sigma)}\subset \Bigl\{ x\in\R^d\mid \sum_{i=1}^d \sigma_i x_i\geq 2^{J-1}\Bigr\}\subset \R^d\setminus B(0,d^{-1/2}2^{J-1})
$$
hence
\begin{equation}\label{e:deboule}
Q_{(J,\sigma)}\subset B(0,2^{J+1})\setminus B(0,d^{-1/2}2^{J-1}).
\end{equation}
If $Q_{(J,\sigma)}$ and $Q_{(J',\sigma')}$ intersect for some $(J',\sigma')\in V$, then  due to \eqref{e:deboule}, $2^{J'+1}\geq d^{-1/2}2^{J-1}$ and $2^{J+1}\geq d^{-1/2}2^{J'-1}$. The number of such $J'$ is bounded above by $5+\log_2(d)$, hence the number of $Q_{(J',\sigma')}$ intersecting $Q_{(J,\sigma)}$ is at most $2^d(5+\log_2(d))$, which is independent of $(J,\sigma)\in V$ and $r$. Besides, \eqref{e:deboule} also implies that if $x,y\in Q_{(J,\sigma)}$, then
$$
\frac{\rho(x)}{\rho(y)}\leq \frac{M}{m}\frac{(d^{-1/2}2^{J-1})^{-\beta}}{(1+2^{2(J+1)})^{-\beta/2}}\leq \frac{M}{m}\frac{(d^{-1/2}2^{J-1})^{-\beta}}{2^{-\beta(J+2)}}\leq \frac{M}{m}(8\sqrt{d})^\beta
$$
which concludes the proof.
\end{proof}

\subsubsection{Uniform lower bound on the isoperimetric constant}\label{s:uniflobo}
We set $r_n=2^n$ for $n\in\N^*$ and denote by $h_{r_n}$ the isoperimetric constant of $G_{r_n}$ (see \eqref{e:defiso}).
\begin{lemma}\label{l:isomgaussian}
There holds
$\liminf_{n\rightarrow +\infty} h_{r_n}>0$.
\end{lemma}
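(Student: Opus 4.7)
The plan is to establish a uniform lower bound $h_{r_n}\ge c_0>0$ for $n$ large by exploiting the ``hub-and-spokes'' structure of $G_{r_n}$.

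First, I would describe this structure. Since $Q_{(J,\sigma)}\subset\H_\sigma$ and two distinct orthants meet only on a union of coordinate hyperplanes of Lebesgue measure zero, the weight $w_{(J,\sigma),(J',\sigma')}$ vanishes whenever $J,J'\ge 1$ and $\sigma\ne\sigma'$. Hence $G_{r_n}$ decomposes into $2^d$ ``rays'' $R_\sigma=\{(J,\sigma):J\ge 0,\ Q_{(J,\sigma)}\subset\Ball_{r_n}\}$ that pairwise share only the central layer $\{(0,\sigma'):\sigma'\in\{-1,1\}^d\}$. Because $Q_{(0,\sigma)}=\Ball_2$ for every $\sigma$, the central layer forms a complete graph in which every vertex weight and every center-to-center edge weight equals $\rho(\Ball_2)$. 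Using $\rho(x)\asymp \langle x\rangle^{-\ppow}$ together with \eqref{e:deboule}, one obtains for $J\ge 1$ the estimates $\pds_{(J,\sigma)}\asymp 2^{J(d-\ppow)}$ and $w_{(J,\sigma),(J+1,\sigma)}\asymp \pds_{(J,\sigma)}$. Since $\ppow>d$, summing the resulting geometric series produces a constant $K_0$ such that $\vol(V_{r_n})\le K_0$ uniformly in $n$.

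Second, I would establish a single-ray Cheeger estimate: there exists $c_1>0$, depending only on $d$ and $\ppow$, such that for every $\sigma$ and every $W\subset R_\sigma$ with $(0,\sigma)\notin W$, the sum of within-ray edge weights between $W$ and $R_\sigma\setminus W$ is at least $c_1\vol(W)$. Indeed, letting $J_0=\min\{J\ge 1:(J,\sigma)\in W\}$, the edge between $(J_0-1,\sigma)$ and $(J_0,\sigma)$ alone contributes a weight comparable to $\pds_{(J_0,\sigma)}$, while the geometric decay of the vertex weights yields $\vol(W)\le \sum_{J\ge J_0}\pds_{(J,\sigma)}\le \pds_{(J_0,\sigma)}/(1-2^{d-\ppow})$.

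Third, I would conclude by a case analysis on any $U\subset V_{r_n}$ with $0<\vol(U)\le \vol(V_{r_n})/2$. If $U$ contains some but not all central vertices, then the center-to-center contribution to $|\partial U|$ is at least $\rho(\Ball_2)$, while $\vol(U)\le K_0$, so $|\partial U|/\vol(U)\ge \rho(\Ball_2)/K_0$. If $U$ contains no central vertex, then each $U^\sigma=U\cap R_\sigma$ satisfies the hypothesis of the single-ray estimate; since the within-ray edge sets of different rays are disjoint, summation yields $|\partial U|\ge c_1\vol(U)$. Finally, if $U$ contains every central vertex, the previous case applied to $\widetilde U=V_{r_n}\setminus U$, combined with $|\partial U|=|\partial \widetilde U|$ and $\vol(\widetilde U)\ge \vol(U)$ (from the volume constraint), yields the same bound. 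The main obstacle is Step~1: one must verify carefully that the different orthants contribute only measure-zero intersections, that the ray vertex weights genuinely have geometric decay, and that consecutive-edge weights are comparable to vertex weights. Once this is in place, the single-ray estimate is immediate from a geometric series and the global bound reduces to the short case analysis above.
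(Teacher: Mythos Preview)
Your proposal is correct and takes essentially the same approach as the paper: both exploit the hub-and-spokes structure of $G_{r_n}$, use a ``first exit along a ray'' edge together with the geometric decay $\pds_{(J,\sigma)}\asymp 2^{J(d-\beta)}$ to bound $|\partial U|/\vol(U)$ from below, and split into cases according to whether $U$ contains central vertices. Your case organization (some/none/all central vertices, reducing the last case to the second by complementation) is slightly cleaner than the paper's split on whether $0\in\mathrm{Im}(U)$ with auxiliary constants $n_0$ and $w_{\min}$, but the underlying argument is the same.
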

\begin{proof}[Proof of Lemma \ref{l:isomgaussian}]
For $n\in\N^*$, the graph $G_{r_n}$ is finite, connected and non-empty.
Since $G_{r_n}$ is defined as a subgraph of $G$ with same weights, the volume of a set $U\subset V_{r_n}$ is the same as the volume of $U$ considered as a subset of $V$, and is equal to ${\vol}(U)=\sum_{i\in U}\rho(Q_i)$. Let $n_0\in\N^*$ such that ${\vol}(V_{r_{n_0}})\geq\frac34 \vol(V)$. We denote by $w_{\min}>0$ the minimum of the weights of edges between elements of $V_{r_{n_0}}$.
We consider $n\geq n_0$. For $U\subset V$ we denote by ${\rm Im}(U)$ its image in $\R^d$, i.e., 
$$
{\rm Im}(U)=\bigcup_{i\in U} Q_i.
$$
Let us consider $U\subset V_{r_n}$ with $0<{\rm vol}(U)\leq \frac12 {\rm vol}(V_{r_n})$. We seek for a lower bound on $|\partial U|/{\rm vol}(U)$.

Note that  $V_{r_{n_0}}$ can naturally be considered as a subset of $V_{r_n}$, itself a subset of $V$, and that the previous inequality yields ${\rm vol}(U)\leq \frac12 {\rm vol}(V)<{\rm vol}(V_{r_{n_0}})$. We deduce that there exists a vertex which is in $V_{r_{n_0}}$ but not in $U\subset V_{r_n}$. 

If $0\in {\rm Im}(U)$, pick $(J,\sigma)$ a vertex which is in $ V_{r_{n_0}}$ but not in $U$. Necessarily there exists $0\leq J'< J$ with $(J',\sigma)\in U$ and $(J'+1,\sigma)\notin U$. The edge between the vertices $(J',\sigma)$ and $(J'+1,\sigma)$ joins $U$ to $U^c$. Its weight is bounded below by $w_{\min}$. Since $\vol(U)$ is bounded above by $\vol(V)$ which is finite, this proves that $|\partial U|/\vol(U)$ is bounded below independently of $U$ (and $n\geq n_0$) if $0\in{\rm Im}(U)$.

Assume now $0\notin {\rm Im}(U)$. Denote by $\mathcal{S}$ the set of $\sigma\in\{-1,1\}^d$ such that there exists $J\in\N$ for which $(J,\sigma)\in U$. For $\sigma\in\mathcal{S}$, we denote by $J(\sigma)\in\N^*$ the smallest integer $J$ such that $(J,\sigma)\in U$. By definition, the edge between $(J(\sigma),\sigma)$ and $(J(\sigma)-1,\sigma)$ joins $U$ to $U^c$. We set $r_\sigma=2^{J(\sigma)-1}$.
Due to \eqref{e:Ink} and \eqref{e:Conv},
$$
\rho(Q_{(J(\sigma),\sigma)}\cap Q_{(J(\sigma)-1,\sigma)})\geq C_\rho m r_\sigma^{d-\beta}.
$$
Hence
\begin{equation}\label{e:volbdryA}
|\partial U|\geq C_\rho\sum_{\sigma\in\mathcal{S}}r_\sigma^{d-\beta}.
\end{equation}
To find an upper bound on $\vol(U)$ we notice that ${\rm Im}(U)\subset \bigcup_{\sigma\in\mathcal{S}} \H_\sigma \cap B(0,r_\sigma)^c$ due to the definition of $J(\sigma)$. Since each point in ${\rm Im}(U)$ belongs to at most $\mnum$ sets $Q_j$, we obtain
\begin{equation}\label{e:ptitvol}
\vol(U)\leq \mnum \rho({\rm Im}(U)) \leq  \mnum M C_\rho'\sum_{\sigma\in\mathcal{S}}\int_{r_\sigma}^{+\infty}s^{d-1-\beta}ds\leq C_\rho'' \sum_{\sigma\in\mathcal{S}}r_\sigma^{d-\beta}.
\end{equation}
The key fact is that the bounds \eqref{e:volbdryA} and \eqref{e:ptitvol} are independent of $r$.
Hence $|\partial U|/\vol(U)$ is bounded below independently of $U$ and $r$. This finishes the proof of Lemma \ref{l:isomgaussian}.
\end{proof}

\subsubsection{Uniform spectral gap} \label{s:jpp2}
It can be proved that $\lambda_2(L)>0$ where $L$ is the graph Laplacian (see Definition \ref{d:weigraph}) associated to the graph $G$ introduced in Section \ref{s:decompoRd}, but this is not what we will need in Section \ref{s:conclulogconcave}. Instead, we prove in this section a lower bound on $\lambda_2(L_{r_n})$ which is uniform in $n$, where $L_{r_n}$ is the graph Laplacian associated to the weighted graph $(V_{r_n},E_{r_n},\delta_{r_n},w_{r_n})$ introduced in Section \ref{s:decompoRd} (and $r_n=2^n$).

\begin{lemma}\label{l:inf}
There holds $\liminf_{n\rightarrow+\infty}\lambda_2(L_{r_n})>0$.
\end{lemma}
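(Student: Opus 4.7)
The plan is to derive this lemma as an immediate consequence of the two preceding lemmas via the Cheeger inequality (Proposition \ref{p:cheeger}). The machinery has been set up precisely so that Lemma \ref{l:inf} is essentially a corollary.

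First I would verify that the hypothesis \eqref{e:gammaplusgrandquen} holds for each Laplacian $L_{r_n}$ with a constant $C$ independent of $n$. By Lemma \ref{l:type}, the measure $\rho_{r_n}$ is of type $(\mnum,\mrat)$ with respect to $(Q_{(J,\sigma)})_{(J,\sigma)\in V_{r_n}}$, where $\mnum$ and $\mrat$ do not depend on $n$. By Definition \ref{d:weigraph}, each vertex of $G_{r_n}$ has at most $\mnum$ neighbors, and since $w_{ij}=\rho(Q_i\cap Q_j)\leq \rho(Q_i)=\pds_i$ for any edge $(i,j)\in E_{r_n}$, we obtain $\sum_{j}w_{ij}\leq \mnum\, \pds_i$, i.e.\ \eqref{e:gammaplusgrandquen} holds uniformly in $n$ with $C=\mnum$.

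Next I would apply Proposition \ref{p:cheeger} to the finite weighted graph $G_{r_n}=(V_{r_n},E_{r_n},\delta_{r_n},w_{r_n})$. Since $V_{r_n}$ is finite, we are in the setting of the elementary proof in Appendix~\ref{a:cheeger}, and the conclusion reads
\begin{equation*}
\lambda_2(L_{r_n})\geq \frac{1}{2\mnum}\, h_{r_n}^2,
\end{equation*}
where $h_{r_n}$ is the isoperimetric constant of $G_{r_n}$ introduced just above the statement of the lemma.

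Finally, Lemma \ref{l:isomgaussian} provides $\liminf_{n\to+\infty} h_{r_n}>0$, so taking the liminf as $n\to +\infty$ in the previous inequality yields
\begin{equation*}
\liminf_{n\to+\infty}\lambda_2(L_{r_n})\geq \frac{1}{2\mnum}\Bigl(\liminf_{n\to+\infty} h_{r_n}\Bigr)^{2}>0,
\end{equation*}
which is the desired conclusion. The only non-trivial ingredient is Lemma \ref{l:isomgaussian}, already proved; the present lemma itself involves no new obstacle beyond chaining the Cheeger inequality with the uniform estimates from Lemmas \ref{l:type} and \ref{l:isomgaussian}.
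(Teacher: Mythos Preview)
Your proposal is correct and follows essentially the same approach as the paper: apply the Cheeger inequality (Proposition~\ref{p:cheeger}) with the uniform constant $C=\mnum$ furnished by Lemma~\ref{l:type}, and then invoke Lemma~\ref{l:isomgaussian} to bound $h_{r_n}$ from below. Your write-up is slightly more detailed in verifying \eqref{e:gammaplusgrandquen}, but the argument is identical.
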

\begin{proof}[Proof of Lemma \ref{l:inf}]
Due to Proposition \ref{p:cheeger}, there holds $\lambda_2(L_{r_n})\geq \frac{h_{r_n}^2}{2\mnum }$ and $A$ is uniform in $n$ according to Lemma \ref{l:type}. Lemma \ref{l:inf} then follows from Lemma \ref{l:isomgaussian}.
\end{proof}

\begin{remark}\label{r:sanscheeger}
It is possible to prove Lemma \ref{l:inf} directly using the explicit form of the graphs and elementary (but fairly long) computations in the one-dimensional branches (see Figure \ref{fig:graph}).
\end{remark}

\subsection{Conclusion of the proof of Theorem \ref{t:powlawdist}}\label{s:conclulogconcave}
In the sequel, $r=2^n$ for some large $n\in\N$ which is fixed later. We denote by $\phi_{\mu,r}$ and $\phi_{\nu,r}$ the restriction of $\phi_\mu$ and $\phi_\nu$ to $\Res_r$ (extended by $+\infty$ outside $\Res_r$). We set $\psi_{\mu,r}=\phi_{\mu,r}^*$ and $\psi_{\nu,r}=\phi_{\nu,r}^*$; then $\psi_{\mu,r}^*=\phi_{\mu,r}$ and $\psi_{\nu,r}^*=\phi_{\nu,r}$. Recall also the notation $\rho_r$, $\mu_r$ and $\nu_r$ from \eqref{e:rhormurnur}. We apply Theorem \ref{l:varineq} together with Lemma \ref{l:type} and Lemma  \ref{l:inf}. This gives
\begin{equation}\label{e:varineqrhos}
\Var_{\rho_r}(\phi_{\mu,r}-\phi_{\nu,r})\leq C_{\rho,\mathcal{Y}}r\sca{\psi_{\mu,r}-\psi_{\nu,r}}{\nu_r-\mu_r}
\end{equation}
where $C_{\rho,\mathcal{Y}}$ does not depend on $r$. The arguments from \eqref{e:varineqrhosgau} to \eqref{e:phimuphinugau} work here too (they do not rely on any specific property of $\rho$), and we obtain exactly \eqref{e:phimuphinugau}, i.e.,
\begin{equation}\label{e:phimuphinu}
\|\phi_\mu-\phi_\nu\|_{L^2(\rho)}^2\leq C_{\rho,\mathcal{Y}}\left(r^2W_1(\mu,\nu)+r^2m_0(r)+m_1(r)^2+m_2(r)\right)
\end{equation}
where the truncated moments $m_0(r)$, $m_1(r)$, $m_2(r)$ are defined in \eqref{e:truncmoments}. Notice that $m_0(r)$, $m_1(r)$ and $m_2(r)$ are finite due to the assumption that $\beta>d+2$. Also, $\|\phi_\mu-\phi_\nu\|_{L^2(\rho)}$ is upper bounded by a constant which only depends on $\rho$ (for the same reason as in Section \ref{s:vrailogconc} below \eqref{e:upperboundmoment2gau}), thus we may assume in the sequel that $W_1(\mu,\nu)\leq 1/2$ in order to prove \eqref{e:yeah}.

Now recall that $\rho(x)=c(x)\langle x\rangle^{-\beta}$ with $\beta>d+2$ and $0<m\leq c(x)\leq M<+\infty$. Hence for $\ell\in\{0,1,2\}$ 
\begin{equation}\label{e:upperboundmoment2}
m_\ell(r)\leq C_{\rho,\ell} r^{d+\ell-\beta}.
\end{equation}
We plug into \eqref{e:phimuphinu}. Since $r^{d+2-\beta}\geq r^{2d+2-2\beta}$, we get 
$$
\|\phi_\mu-\phi_\nu\|_{L^2(\rho)}^2\leq C_{\rho,\mathcal{Y}}\left(r^2W_1(\mu,\nu)+r^{d+2-\beta}\right).
$$
We choose $r=2^n$ with $n=\lceil -(\beta-d)^{-1}\log_2(W_1(\mu,\nu))\rceil$ where $\lceil x\rceil=\inf\{j\in\N \mid j\geq x\}$. We obtain \eqref{e:yeah}.
Concerning \eqref{e:powlawmap} we use Proposition \ref{p:GNdelmer} for $K=\Res_r$ ($r=2^n$ with $n$ to be fixed later) and recall that $\phi_\mu,\phi_\nu$ are $R_{\mathcal{Y}}$-Lipschitz. We get
\begin{equation}\label{e:balch}
\|T_\mu-T_\nu\|^2_{L^2(\lambda,\Ball_r)}\leq C_\rho  r^{2(d-1)/3}R_{\mathcal{Y}}^{4/3}\|\phi_\mu-\phi_\nu\|^{2/3}_{L^2(\lambda,\Ball_r)}.
\end{equation} 
This is written for the Lebesgue measure $\lambda$, but since $m\langle x\rangle^{-\beta}\leq\rho(x)\leq M$ we deduce that
$$
\|T_\mu-T_\nu\|^2_{L^2(\rho,\Ball_r)}\leq C_{\rho,\mathcal{Y}}  r^{2(d-1)/3}r^{\beta/3}\|\phi_\mu-\phi_\nu\|^{2/3}_{L^2(\rho)}.
$$
Since $\|T_\mu\|_{L^\infty}\leq R_{\YSp}$ and $\|T_{\nu}\|_{L^\infty}\leq R_{\YSp}$, using \eqref{e:upperboundmoment2} for $\ell=0$ we get
$$
\|T_\mu-T_\nu\|^2_{L^2(\rho)}\leq  C_{\rho,\mathcal{Y}}  r^{2(d-1)/3}r^{\beta/3}\|\phi_\mu-\phi_\nu\|^{2/3}_{L^2(\rho)}+C_{\rho,\mathcal{Y}} r^{d-\beta}.
$$
We optimize over $r$, i.e, we choose $r=2^n$ with $n=\lceil -(2\beta-1-\frac{d}{2})^{-1} \log_2(\|\phi_\mu-\phi_\nu\|_{L^2(\rho)})\rceil$. We obtain
$$
\|T_\mu-T_\nu\|_{L^2(\rho)}\leq C_{\rho,\mathcal{Y}}\|\phi_\mu-\phi_\nu\|^{(\beta-d)/(4\beta-d-2)}_{L^2(\rho)}.
$$
Combining with \eqref{e:yeah} we get \eqref{e:powlawmap}.

Let us finally prove the sharpness of \eqref{e:yeah}, for an arbitrary $\rho$ satisfying the assumptions of Theorem \ref{t:powlawdist} for some $\beta>d+2$. We consider $\phi_r(x)=\max(|x|-r,0)-c_r$ for $r>0$, where $c_r$ is chosen in a way that $\int_{\R^d}\phi_r d\rho = 0$. Then, writing $f(r) \asymp g(r)$ if $f(r)/g(r)$ is bounded above and below uniformly as $r\rightarrow +\infty$, we have $\|\phi_{2r}-\phi_r\|_{L^2(\rho)}\asymp r^{\frac12(-\beta+d+2)}$. Also, noticing that $(\nabla\phi_r)_{\#}\rho$ is the linear combination of a Dirac mass in $0$ with mass $\rho(\mathcal{B}_r)$ and a uniform measure on the unit sphere with total mass $1-\rho(\mathcal{B}_r)$, we obtain 
$$
W_1((\nabla\phi_r)_{\#}\rho,(\nabla\phi_{2r})_{\#}\rho)=\rho(\mathcal{B}_{2r})-\rho(\mathcal{B}_r)\asymp r^{-\beta+d},
$$
hence $\|\phi_{2r}-\phi_r\|_{L^2(\rho)}\asymp W_1((\nabla\phi_r)_{\#}\rho,(\nabla\phi_{2r})_{\#}\rho)^\theta$. This shows the sharpness of the exponent $\theta$ in \eqref{e:yeah}. \qed

\bigskip 

The same test functions as above may be used to prove sharpness of \eqref{e:stabpotlogconc}, up to the log-factor, for $\rho(x)=c_d e^{-|x|^2/2}$:
\begin{proposition} \label{p:sharpexpo}
If $\rho(x)=c_d e^{-|x|^2/2}$ is the standard Gaussian, then \eqref{e:stabpotlogconc} is sharp up to the log-factor.
\end{proposition}
\begin{proof}
Consider as above the test function $\phi_r(x)=\max(|x|-r,0)-c_r$ for $r>0$. Setting $r'=r+r^{-1}$ for $r$ large, we have $\|\phi_{r'}-\phi_r\|_{L^2(\rho)}\asymp r^{\frac12(d-4)}e^{-r^2/4}$ and  
$$
W_1((\nabla\phi_{r'})_{\#}\rho,(\nabla\phi_{r})_{\#}\rho)\asymp r^{d-2}e^{-r^2/2}
$$
hence $\|\phi_{r'}-\phi_r\|_{L^2(\rho)}\asymp W_1^{\frac12}|\log W_1|^{-1}$ where $W_1$ stands for $W_1((\nabla\phi_{r'})_{\#}\rho,(\nabla\phi_{r})_{\#}\rho)$.
\end{proof}

\begin{remark}[Stability estimates for exponentially decaying densities]\label{r:exppowdist}
The proof strategy developed in this section may be adapted to prove stability estimates analogous to \eqref{e:yeah} and \eqref{e:powlawmap} in the case where $\rho(x)=c(x)e^{-\kappa |x|^{-\alpha}}$ for $\alpha,\kappa>0$ and $0<m\leq c(x)\leq M<+\infty$. The main difference lies in the definition of the cells $\Cor_J$: instead of \eqref{e:Ink}, we have to define the cells in a way that $\rho$ varies by not more than a multiplicative factor which is uniform over all cells. This can be made via a radial decomposition similar in spirit to \eqref{e:Ink}. At the same time, in order for each cell to be convex and to intersect only a bounded number of other cells, an angular decomposition needs to be performed. The details are rather lengthy and we shall not pursue this here.
\end{remark}

\subsection{Weighted Poincaré inequalities} \label{s:weightpoincare}
As a corollary of our proof technique, we recover weighted Poincaré inequalities for generalized Cauchy distributions similar to those established in the paper \cite{bobkovledoux}:
\begin{proposition} Let $\rho(x)=c(x)\langle x\rangle^{-\beta}$ be a probability measure on $\R^d$, with $\beta>d+2$ and $0<m\leq c(x)\leq M<+\infty$. Then there exists $C>0$ such that 
\begin{equation}\label{e:weightpoincare2}
\Var_\rho(f)\leq C\int_{\R^d} |\nabla f(x)|^2\langle x\rangle^2d\rho(x)
\end{equation} 
for any function $f$ for which the right-hand side is finite.
\end{proposition}
\begin{proof}
We notice that the $Q_j$ constructed in Section \ref{s:decompoRd} (and given by \eqref{e:Conv}) are obtained by homotheties and isometries of a single convex shape, namely the convex hull of the intersection of an orthant with an annulus whose outer radius is $4$ times its inradius. As a consequence, the Poincaré-Wirtinger inequality ${\rm Var}_{\rho_j}(f)\leq C_j\int_{Q_j}|\nabla f|^2d\rho_j$ holds, with a constant $C_j$ proportional to ${\rm diam}(Q_j)^2$. Due to the properties of the decomposition of $\R^d$ constructed in Section \ref{s:decompoRd} this implies 
$$
{\rm Var}_{\rho_j}(f)\leq C\int_{Q_j}|\nabla f(x)|^2 \langle x\rangle^2 d\rho_j(x)
$$ 
with a constant $C$ which is now independent of $j$ (and $f$). Applying Lemma \ref{l:gluevargauss} we obtain \eqref{e:weightpoincare2}.
\end{proof} 
The results in \cite{bobkovledoux} derive such inequalities with other methods and address the dependence of $C$ in the parameters $\beta,d$. 

\subsection{Comparison between gluing methods}\label{s:strategy} 
It is worth comparing the two methods used in this work to glue together variance inequalities. The  first one, based on the Boman chain condition, does not seem to generalize to unbounded supports, see comments in Section \ref{s:warmup}. On the contrary, the method based on spectral graph theory used in Section \ref{s:proofexception} could as well be used to prove stability for $\rho$ supported in a bounded domain $\X$, by constructing the weighted graph associated to the Boman decomposition of $\X$ (with one vertex per cube and one edge per pair of intersecting cubes). However, proving that the Cheeger constant of this graph is $>0$ quickly becomes a pain unless assuming some stronger regularity on $\partial\X$, for instance that $\X$ is a Lipschitz domain. This approach yields no improvement over Theorem \ref{t:mainjohn} but it could be leveraged to find other cases than Theorems \ref{t:explosebord} and \ref{t:hallin} where stability estimates can be proved even though $\rho$ is not assumed to be bounded away from $0$ and $+\infty$. We leave this for future work.

Let us finally illustrate on a simple example an interesting feature shared by both approaches. Let us consider a bounded domain $\X$ with dumbbell shape, i.e. two dsjoint balls joint by a thin tube of length $1$ and radius $\varepsilon\ll1$. We endow $\X$ with the Lebesgue measure, normalized to be a probability measure. Then $\B$ in Definition \ref{d:boman} is of order $\varepsilon^{-1}\gg1$, and $\D$ in \eqref{e:ctdoubling} is consequently $\gtrsim \varepsilon^{-d}$. If one now considers the graph associated to the Boman cube decomposition of $\X$, then one can check that $\lambda_2\lesssim \varepsilon^{d+1}$. In both cases we see that the constants in the variance inequalities \eqref{e:varineqgeneral} and \eqref{e:varineq} blow-up as $\varepsilon\rightarrow0$. Of course, this is related to the worsening of the Poincaré constant of $\X$ as $\varepsilon\rightarrow 0$.

\appendix

\section{Proof of Lemma \ref{l:maximal}}  \label{a:decompprop}\label{a:maximal}
For $x\in\X$ we denote by $\mathcal{F}_x$ the set of $Q\in\mathcal{F}$ for which $x\in Q$, and for $g\in L^1(\rho)$ we set
$$
Mg(x)=\sup_{Q\in \mathcal{F}_x} \frac{1}{\rho(BQ)}\int_{BQ}|g(y)|d\rho(y).
$$
We first prove that 
\begin{equation}\label{e:hardy}
\|Mg\|_{L^2(\rho)}\leq 2^{3/2}\D^{1/2}\|g\|_{L^2(\rho)}.
\end{equation}
For this it is sufficient to prove that there exists $\D>0$ such that 
\begin{equation}\label{e:alphag}
\forall \alpha>0, \quad \rho(S_\alpha)\leq \D\alpha^{-1}\|g\|_{L^1(\rho)}
\end{equation}
where $S_\alpha=\{x\in \X \mid Mg(x)>\alpha\}$. Indeed, \eqref{e:hardy} follows immediately from \eqref{e:alphag}, together with the $L^\infty(\rho)$-boundedness of $M$ and the Marcinkiewicz interpolation theorem \cite[Theorem 9.1 in Chapter VIII.9]{dibenedetto}.

Let us prove \eqref{e:alphag}. For any $x\in S_\alpha$, we pick $Q_x\in\mathcal{F}_x$ such that $\int_{BQ_x} |g(y)|d\rho(y)\geq \alpha\rho(BQ_x)$. Using the usual Vitali covering argument, we find a countable subset $S_\alpha'\subset S_\alpha$ such that $BQ_x\cap BQ_{x'}=\emptyset$ for any distinct $x,x'\in S_\alpha'$, and
$$
S_\alpha \subset \bigcup_{x\in S_\alpha'} \ct Q_x.
$$
Hence, using assumption \eqref{e:ctdoubling}, $B\geq 1$ and the disjointness of the sets $BQ_x$, we obtain
\begin{align*}
\rho(S_\alpha)\leq \D \sum_{x\in S_\alpha'}\rho(Q_x)\leq \D \sum_{x\in S_\alpha'}\rho(BQ_x)&\leq \alpha^{-1}\D\sum_{x\in S_\alpha'}\int_{BQ_x} |g(y)|d\rho(y)\\
&\leq \alpha^{-1}\D\|g\|_{L^1(\rho)}
\end{align*}
which concludes the proof of \eqref{e:alphag} and \eqref{e:hardy}. 

We turn to the proof of Lemma \ref{l:maximal}. Recall that $a_Q\geq 0$ for any $Q\in\mathcal{F}$ -- this is the only property of the sequence $(a_Q)_{Q\in\mathcal{F}}$ that we use below. We first observe that for any $y\in Q$ there holds $Mg(y)\geq \frac{1}{\rho(BQ)}\int_{BQ}|g(x)|d\rho(x)$. Hence, 
\begin{align*}
\Bigl|\int_{\R^d}\sum_{Q\in\mathcal{F}} a_Q \chi_{BQ}(x)g(x)d\rho(x)\Bigr|
&\leq \sum_{Q\in\mathcal{F}} a_Q \rho(B Q) \frac{1}{\rho(BQ)}\int_{BQ}|g(x)|d\rho(x) \\
&\leq \sum_{Q\in\mathcal{F}}a_Q \frac{\rho(B Q)}{\rho(Q)} \int_{Q}Mg(y) d\rho(y)\\
&\leq \D \sum_{Q\in\mathcal{F}}a_Q \int_{Q}Mg(y) d\rho(y) \\
&= D\int_{\R^d}\sum_{Q\in\mathcal{F}} a_{Q}\chi_{Q}(y)Mg(y) d\rho(y),
\end{align*}
where we used again assumption \eqref{e:ctdoubling} to get the last inequality.
Combining with \eqref{e:hardy}, we get
\begin{align*}
\Bigl|\int_{\R^d}\sum_{Q\in\mathcal{F}} a_Q \chi_{BQ}(x)g(x)d\rho(x)\Bigr| 
& \leq \D \Bigl\|\sum_{Q\in\mathcal{F}} a_Q\chi_{Q}\Bigr\|_{L^2(\rho)}\|Mg\|_{L^2(\rho)}\\
&\leq (2\D)^{3/2} \Bigl\|\sum_{Q\in\mathcal{F}}a_Q\chi_{Q}\Bigr\|_{L^2(\rho)}\|g\|_{L^2(\rho)}
\end{align*}
which concludes the proof, by duality.

\section{Proof of Proposition \ref{p:cheeger} (Cheeger inequality)}\label{a:cheeger}
For the proof of Proposition \ref{p:cheeger}, we keep the notation introduced in Section \ref{s:laplagraphs}. We adapt the proof of \cite[Theorem 2.2]{chung}. For $u\in \ell^2(V,\pds)$ we set 
\begin{equation}\label{e:rayleigh}
R(u)= \frac{\mathcal{Q}(u)}{\|u\|_\pds^2}.
\end{equation}
Let $\varepsilon>0$ and let $u:V\rightarrow \R$ such that $R(u)\leq \lambda_2(L)+\varepsilon$ and $\langle u,\mathbf{1}\rangle_\pds=0$. Up to relabelling the vertices we may assume that 
$$
u(1)\geq \ldots\geq u(n)
$$
where $n=|V|$.
For $k\in[n]$ let $S_k=\{1,\ldots,k\}$ and define
$$
\alpha_G=\min_{k\in [n]} \frac{|\partial S_k|}{\min({\rm vol}(S_k),{\rm vol}(V\setminus S_k))}.
$$
Let $r\in[n]$ denote the largest integer such that $\text{vol}(S_r)\leq \text{vol}(G)/2$. Since $\sum_{i\in[n]}\pds_{i}u(i)=0$,
$$
\sum_{i\in[n]} \pds_i u(i)^2=\min_{c\in\R} \sum_{i\in[n]} \pds_i(u(i)-c)^2 \leq \sum_{i\in[n]} \pds_i(u(i)-u(r))^2.
$$
To lighten notation below, we omit the notation ``$\in[n]$" when the index of a sum runs over $[n]$. We define the positive part and the negative part of $u(i)-u(r)$, denoted by $u_+(i)$ and $u_-(i)$ respectively, as follows:
$$
u_+(i)=\begin{cases} u(i)-u(r) \text{  if  } u(i)\geq u(r) \\ 0 \text{  otherwise}\end{cases}  \qquad u_-(i)=\begin{cases} |u(i)-u(r)| \text{  if  } u(i)\leq u(r) \\ 0 \text{  otherwise}\end{cases}
$$
There holds
\begin{align*}
2R(u)&=\frac{\sum_{i,j}w_{ij}(u(i)-u(j))^2}{\sum_i \pds_i u(i)^2}\\
&\geq\frac{\sum_{i,j}w_{ij}(u(i)-u(j))^2}{\sum_i \pds_i (u(i)-u(r))^2}\\
&\geq \frac{\sum_{i,j}w_{ij}\left((u_+(i)-u_+(j))^2+(u_-(i)-u_-(j))^2\right)}{\sum_i \pds_i (u_+(i)^2+u_-(i)^2)}.
\end{align*}
Without loss of generality we have $R(u_+)\leq R(u_-)$ and therefore $\lambda_2(L)+\varepsilon\geq R(u_+)$ since $\frac{a+c}{b+d}\geq \min(\frac{a}{b},\frac{c}{d})$ (if we assume $\lambda_2(L)+\varepsilon\geq R(u_-)$ instead, the subsequent computations can be carried out in the same way). Then we have
\begin{align}
\lambda_2(L)+\varepsilon\geq R(u_+)&=\frac12 \frac{\sum_{i,j}w_{ij}(u_+(i)-u_+(j))^2}{\sum_i \pds_i u_+(i)^2}\cdot 1\nonumber\\
&=\frac12 \frac{\sum_{i,j}w_{ij}(u_+(i)-u_+(j))^2}{\sum_i \pds_i u_+(i)^2}\cdot\frac{\sum_{i,j}w_{ij}(u_+(i)+u_+(j))^2}{\sum_{i,j}w_{ij}(u_+(i)+u_+(j))^2}\label{e:double}\\
&\geq \frac{\left(\sum_{i,j}w_{ij}|u_+(i)^2-u_+(j)^2|\right)^2}{8C\left(\sum_i \pds_i u_+(i)^2\right)^2}.\label{e:triple}
\end{align}
To go from \eqref{e:double} to \eqref{e:triple} we use Cauchy-Schwarz for the numerator; for the denominator we use \eqref{e:gammaplusgrandquen}.
Let us show the following identity for the numerator of \eqref{e:triple}:
\begin{equation}\label{e:othnum}
\frac12 \sum_{i,j}w_{ij}|u_+(i)^2-u_+(j)^2|=\sum_{i}(u_+(i)^2-u_+(i+1)^2)|\partial S_i|
\end{equation}
where for $S\subset V$, we set $\partial S=\{(k,\ell)\in V\mid k\in S, \ell\notin S\}$ and $|\partial S|=\sum_{k\in S, \ell\notin S}w_{k\ell}$. We have
\begin{align*}
\sum_{i}(u_+(i)^2-u_+(i+1)^2)|\partial S_i|&=\sum_{i}\sum_{\substack{k\leq i \\ \ell>i}}(u_+(i)^2-u_+(i+1)^2)w_{k\ell}\\
&=\sum_{k<\ell}w_{k\ell}\sum_{i=k}^{\ell-1}u_+(i)^2-u_+(i+1)^2\\
&=\sum_{k<\ell}w_{k\ell}(u_+(k)^2-u_+(\ell)^2)
\end{align*}
which concludes the proof of \eqref{e:othnum} since $u_+(1)\geq \ldots \geq u_+(n)$.

We continue our computations: we set $\vol'(S)=\min(\vol(S),\vol(G)-\vol(S))$, we have
\begin{align}
\lambda_2(L)+\varepsilon&\geq \frac{\left(\sum_{i}(u_+(i)^2-u_+(i+1)^2)\alpha_G\vol'(S_i)\right)^2}{2C\left(\sum_i \pds_i u_+(i)^2\right)^2}\label{e:alphaGapp}\\
&=\frac{\alpha_G^2}{2C} \frac{\left(\sum_{i}u_+(i)^2(\vol'(S_i)-\vol'(S_{i-1}))\right)^2}{\left(\sum_i \pds_i u_+(i)^2\right)^2} \nonumber 
\end{align}
where \eqref{e:alphaGapp} follows from \eqref{e:triple}, \eqref{e:othnum} and the definition of $\alpha_G$. Then, we observe that the quotient in the right-hand side is equal to $1$. Indeed, since $u_+(i)= 0$ for $i\geq r$, and $\vol'(S_i)=\vol(S_i)$ for $i<r$ by definition of $r$, we have 
$$
\sum_{i\in[n]}u_+(i)^2(\vol'(S_i)-\vol'(S_{i-1}))=\sum_{i=1}^{r-1} u_+(i)^2(\vol(S_i)-\vol(S_{i-1}))=\sum_{i=1}^{r-1}u_+(i)^2 \delta_i =\sum_{i\in[n]} u_+(i)^2 \delta_i.
$$
We conclude that $\lambda_2(L)+\varepsilon\geq \alpha_G^2/2C$. This being true for any $\varepsilon>0$, we obtain $\lambda_2(L)\geq \alpha_G^2/2C$, which concludes the proof since  $\alpha_G\geq h$.


\begin{thebibliography}{1}
\bibitem{agueh}
Martial Agueh, and Guillaume Carlier. ``Barycenters in the Wasserstein space." SIAM Journal on Mathematical Analysis 43.2 (2011): 904-924.

\bibitem{amick}
Charles J. Amick. ``Some remarks on Rellich's theorem and the Poincaré inequality." Journal of the London Mathematical Society 2.1 (1978): 81-93.

\bibitem{BGL}
Dominique Bakry, Ivan Gentil, and Michel Ledoux. Analysis and geometry of Markov diffusion operators. Vol. 103. Cham: Springer, 2014.

\bibitem{bansil}
Mohit Bansil, and Jun Kitagawa. ``Quantitative stability in the geometry of semi-discrete optimal transport." International Mathematics Research Notices 2022.10 (2022): 7354-7389.

\bibitem{basu}
Saurav Basu, Soheil Kolouri, and Gustavo K. Rohde. ``Detecting and visualizing cell phenotype differences from microscopy images using transport-based morphometry." Proceedings of the National Academy of Sciences 111.9 (2014): 3448-3453.

\bibitem{cheegerunbounded}
Frank Bauer, Matthias Keller, and Radosław K. Wojciechowski. ``Cheeger inequalities for unbounded graph Laplacians." Journal of the European Mathematical Society 17.2 (2015): 259-271.

\bibitem{berman}
Robert J. Berman. ``Convergence rates for discretized Monge–Ampère equations and quantitative stability of optimal transport." Foundations of Computational Mathematics 21.4 (2021): 1099-1140.

\bibitem{bobkovledoux}
Sergey G. Bobkov, and Michel Ledoux. ``Weighted Poincaré-type inequalities for Cauchy and other convex measures." (2009): 403-427.

\bibitem{bojarski}
Bogdan Bojarski. ``Remarks on Sobolev imbedding inequalities." Complex Analysis Joensuu 1987: Proceedings of the XIIIth Rolf Nevanlinna-Colloquium, held in Joensuu, Finland, Aug. 10–13, 1987. Berlin, Heidelberg: Springer Berlin Heidelberg, 2006.

\bibitem{boman}
Jan Boman. ``$L^p$-estimates for very strongly elliptic systems". Department of Mathematics, University of Stockholm, Sweden, 1982. Available at \url{https://mathoverflow.net/questions/320172/famous-but-unavailable-paper-of-jan-boman}

\bibitem{brascamplieb}
Herm Jan Brascamp, and Elliott H. Lieb. ``On extensions of the Brunn-Minkowski and Prékopa-Leindler theorems, including inequalities for log concave functions, and with an application to the diffusion equation." Journal of Functional Analysis 22.4 (1976): 366-389.

\bibitem{brenier}
Yann Brenier. ``Décomposition polaire et réarrangement monotone des champs de vecteurs." CR Acad. Sci. Paris Sér. I Math. 305 (1987): 805-808.

\bibitem{brenierCPAM}
Yann Brenier. ``Polar factorization and monotone rearrangement of vector‐valued functions." Communications on Pure and Applied Mathematics 44.4 (1991): 375-417.


\bibitem{buckley}
Stephen M. Buckley, and Pekka Koskela. ``Sobolev-Poincaré implies John." Mathematical Research Letters 2.5 (1995): 577-593.

\bibitem{bomanjohn}
Stephen M. Buckley, Pekka Koskela, and Guozhen Lu.  ``Boman equals John." Proceedings of the XVI Rolf Nevanlinna Colloquium. W. de Gruyter, 1996.

\bibitem{caffarelli}
Luis A. Caffarelli. ``Monotonicity properties of optimal transportation and the FKG and related inequalities." Communications in Mathematical Physics 214 (2000): 547-563.

\bibitem{chizat}
Guillaume Carlier, Lénaïc Chizat, and Maxime Laborde. ``Displacement smoothness of entropic optimal transport." ESAIM: Control, Optimisation and Calculus of Variations 30 (2024): 25.

\bibitem{barycenters} 
Guillaume Carlier, Alex Delalande and Quentin Merigot. ``Quantitative stability of barycenters in the Wasserstein space." Probability Theory and Related Fields 188.3 (2024): 1257-1286.

\bibitem{pushforward}
Guillaume Carlier, Alex Delalande, and Quentin Mérigot. ``Quantitative stability of the pushforward operation by an optimal transport map." Foundations of Computational Mathematics (2024): 1-28.

\bibitem{galichon}
Victor Chernozhukov, Alfred Galichon, Marc Hallin, and Marc Henry. ``Monge-Kantorovich depth, quantiles, ranks and signs." Annals of Statistics 45.1 (2017): 223-256.

\bibitem{livrerigollet}
Sinho Chewi, Jonathan Niles-Weed, and Philippe Rigollet. Statistical Optimal Transport: École d'Été de Probabilités de Saint-Flour XLIX–2019. Springer Nature, 2025. 


\bibitem{chung}
Fan RK Chung. Spectral graph theory. Vol. 92. American Mathematical Soc., 1997.

\bibitem{couranthilbert}
Richard Courant, and David Hilbert. Methoden der mathematischen Physik, Vol. 2, Springer, Berlin, 1937.

\bibitem{delalande}
Alex Delalande. ``Nearly tight convergence bounds for semi-discrete entropic optimal transport." International Conference on Artificial Intelligence and Statistics. PMLR, 2022.

\bibitem{delmer}
Alex Delalande and Quentin Merigot. ``Quantitative stability of optimal transport maps under variations of the target measure." Duke Mathematical Journal 172.17 (2023): 3321-3357.

\bibitem{deligiannidis}
George Deligiannidis, Valentin De Bortoli, and Arnaud Doucet. ``Quantitative uniform stability of the iterative proportional fitting procedure." The Annals of Applied Probability 34.1A (2024): 501-516.

\bibitem{figdephil}
Guido De Philippis, and Alessio Figalli. ``Second order stability for the Monge–Ampère equation and strong Sobolev convergence of optimal transport maps." Analysis \& PDE 6.4 (2013): 993-1000.

\bibitem{dibenedetto}
Emmanuele DiBenedetto. Real analysis. Springer Science+Business Media, LLC. Vol. 13. Boston: Birkhäuser, 2002.

\bibitem{aram}
Vincent Divol, Jonathan Niles-Weed, and Aram-Alexandre Pooladian. ``Tight Stability Bounds for Entropic Brenier Maps." International Mathematics Research Notices 2025.7 (2025).

\bibitem{eckstein}
Stephan Eckstein, and Marcel Nutz. ``Quantitative stability of regularized optimal transport and convergence of Sinkhorn's algorithm." SIAM Journal on Mathematical Analysis 54.6 (2022): 5922-5948.

\bibitem{figalli}
Alessio Figalli. ``On the continuity of center-outward distribution and quantile functions." Nonlinear Analysis 177 (2018): 413-421.


\bibitem{thibert}
Anatole Gallouët, Quentin Mérigot, and Boris Thibert. ``Strong c-concavity and stability in optimal transport." Journal de Mathématiques Pures et Appliquées (2025).

\bibitem{gigli}
Nicola Gigli. ``On Hölder continuity-in-time of the optimal transport map towards measures along a curve." Proceedings of the Edinburgh Mathematical Society 54.2 (2011): 401-409.

\bibitem{grafakos}
Loukas Grafakos. Classical Fourier Analysis. Graduate texts in mathematics Vol. 249. Springer, 2008.

\bibitem{sobolevmet}
Piotr Hajłasz, and Pekka Koskela. Sobolev met poincaré. Vol. 688. American Mathematical Soc., 2000.

\bibitem{hallin}
Marc Hallin. ``On distribution and quantile functions, ranks and signs in $\R^d$: a measure transportation approach."
Preprint 2017, available at \url{https://ideas.repec.org/p/eca/wpaper/2013-258262.html}

\bibitem{holley}
Richard Holley, and Daniel W. Stroock. ``Logarithmic Sobolev inequalities and stochastic Ising models." Journal of Statistical Physics 46.5/6 (1987): 1159-1194.

\bibitem{iwaniec}
Tadeusz Iwaniec and Craig A. Nolder. ``Hardy-Littlewood inequality for quasiregular mappings in certain domains in $\R^n$." Annales Fennici Mathematici 10.1 (1985): 267-282.

\bibitem{john}
Fritz John. ``Rotation and strain." Communications on Pure and Applied Mathematics 14.3 (1961): 391-413.


\bibitem{kellerbook}
Matthias Keller, Daniel Lenz, and Radoslaw K. Wojciechowski. Graphs and discrete Dirichlet spaces. Vol. 358. Berlin, Germany: Springer, 2021.

\bibitem{kolouri}
Soheil Kolouri, and Gustavo K. Rohde. ``Transport-based single frame super resolution of very low resolution face images." Proceedings of the IEEE Conference on Computer Vision and Pattern Recognition. 2015.

\bibitem{letrouit}
Cyril Letrouit. ``Unstable optimal transport maps." To appear in Comptes-Rendus Mathématiques.

\bibitem{li}
Wenbo Li, and Ricardo H. Nochetto. ``Quantitative stability and error estimates for optimal transport plans." IMA Journal of Numerical Analysis 41.3 (2021): 1941-1965.

\bibitem{martio}
Olli Martio, and Jukka Sarvas. ``Injectivity theorems in plane and space." Annales Fennici Mathematici 4.2 (1979): 383-401.

\bibitem{delmerchaz}
Quentin Mérigot, Alex Delalande, and Frederic Chazal. ``Quantitative stability of optimal transport maps and linearization of the 2-Wasserstein space." International Conference on Artificial Intelligence and Statistics. PMLR, 2020.

\bibitem{fundamentals}
Jean-Baptiste Hiriart-Urruty and Claude Lemar{\'e}chal. ``Fundamentals of convex analysis''. Springer, 2004.

\bibitem{lepeutrec}
Dorian Le Peutrec. ``On Witten Laplacians and Brascamp–Lieb’s inequality on manifolds with boundary." Integral Equations and Operator Theory 87.3 (2017): 411-434.

\bibitem{mischlertrevisan}
Octave Mischler and Dario Trevisan. ``Quantitative stability in optimal transport for general power costs." arXiv preprint arXiv:2407.19337 (2024).

\bibitem{mokhtari}
Shahriar Mokhtari-Sharghi. ``Cheeger Inequality for Infinite Graphs." Geometriae Dedicata 100: 53–64 (2003).


\bibitem{nakki}
Raimo Näkki, and Jussi Väisälä. ``John disks." Expositiones Math. 9 (1991), 3–43.

\bibitem{peyre}
Gabriel Peyré, and Marco Cuturi. ``Computational optimal transport: With applications to data science."  Foundations and Trends in Machine Learning 11.5-6 (2019): 355-607.

\bibitem{pommerenke}
Christian Pommerenke. Boundary behaviour of conformal maps. Vol. 299. Springer Science \& Business Media, 2013.

\bibitem{santambrogio}
Filippo Santambrogio. Optimal Transport for Applied Mathematicians: Calculus of Variations, PDEs, and Modeling. Progress in Nonlinear Differential Equations
and Their Applications, Vol. 87. Birkhäuser, 2015.


\bibitem{villani}
Cédric Villani. Topics in optimal transportation. Graduate studies in mathematics, Vol. 58. American Mathematical Soc., 2021.

\bibitem{villani2}
Cédric Villani. Optimal transport: old and new. Grundlehren der mathematischen Wissenschaften, Vol. 338. Berlin: Springer, 2009.

\bibitem{slepcev}
Wei Wang, Dejan Slepčev, Saurav Basu, John A. Ozolek, and Gustavo K. Rohde.  ``A linear optimal transportation framework for quantifying and visualizing variations in sets of images." International journal of computer vision 101 (2013): 254-269.
\end{thebibliography}
\end{document}